\definecolor{Green}{rgb}{0.0, 0.5, 0.0}
\definecolor{blue}{rgb}{0.0, 0.0, 1.0}
\newtheorem{theorem}{Theorem}[section]
\newtheorem{lemma}[theorem]{Lemma}
\newtheorem{conjecture}[theorem]{Conjecture}
\newtheorem{proposition}[theorem]{Proposition}
\newtheorem{corollary}[theorem]{Corollary}
\theoremstyle{remark}
\newtheorem{remark}[theorem]{Remark}
\newtheorem{defn}[theorem]{Definition}
\newcommand{\QQ}{\mathbb{Q}}
\newcommand{\Qp}{\mathbb{Q}_p}
\newcommand{\ZZ}{\mathbb{Z}}
\newcommand{\Zp}{\mathbb{Z}_p}
\newcommand{\Ga}{\Gamma}
\DeclareMathOperator{\Gal}{Gal}
\DeclareMathOperator{\Sel}{Sel}
\DeclareMathOperator{\coker}{coker}
\DeclareMathOperator{\rank}{rank}
\DeclareMathOperator{\Ext}{Ext}
\DeclareMathOperator{\Ak}{Ak}
\newcommand{\vp}{\varphi}
\newcommand{\Iw}{\mathrm{Iw}}
\newcommand{\HIw}{H^1_{\Iw}}
\newcommand{\Char}{\mathrm{char}}
\newcommand{\lra}{\longrightarrow}
\newcommand{\Sss}{{\Sigma_\mathrm{ss}}}
\newcommand{\So}{{\Sigma_\mathrm{o}}}
\newcommand{\fs}{{\vec{s}}}
\newcommand{\Ep}{E[p^\infty]}
\newcommand{\fM}{\mathfrak{M}}
\newcommand{\cF}{\mathcal{F}}
\newcommand{\cyc}{\mathrm{cyc}}
\newcommand{\ps}[1]{[[ #1 ]]}
  \DeclareFontFamily{U}{wncy}{}
  \DeclareFontShape{U}{wncy}{m}{n}{<->wncyr10}{}
  \DeclareSymbolFont{mcy}{U}{wncy}{m}{n}
  \DeclareMathSymbol{\sha}{\mathord}{mcy}{"58}
\newcommand{\ilim}{\displaystyle \mathop{\varinjlim}\limits}
\newcommand{\plim}{\displaystyle \mathop{\varprojlim}\limits}
\begin{document}
\title{Akashi series and Euler characteristics of signed Selmer groups of elliptic curves with semistable reduction at primes above $p$}
 \author{ Antonio Lei\footnote{D\'epartement de Math\'ematiques et de Statistique,
Universit\'e Laval, Pavillion Alexandre-Vachon,
1045 Avenue de la M\'edecine,
Qu\'ebec, QC,
Canada G1V 0A6.
 E-mail: \texttt{antonio.lei@mat.ulaval.ca}}  \quad
  Meng Fai Lim\footnote{School of Mathematics and Statistics $\&$ Hubei Key Laboratory of Mathematical Sciences,
Central China Normal University, Wuhan, 430079, P.R.China.
 E-mail: \texttt{limmf@mail.ccnu.edu.cn}} }
\date{}
\maketitle

\begin{abstract} \footnotesize
\noindent Let $p$ be an odd prime number, and let $E$ be an elliptic curve defined over a number field $F'$ such that $E$ has semistable reduction at every prime of $F'$ above $p$ and is  supersingular at at least one prime above $p$. Under appropriate hypotheses, we compute the Akashi series of the signed Selmer groups of $E$ over a $\Zp^d$-extension over a finite extension $F$ of $F'$. As a by-product, we also compute the Euler characteristics of these Selmer groups.

\medskip
\noindent\textbf{Keywords and Phrases}: Akashi series, signed Selmer groups, Euler characteristics.

\smallskip
\noindent \textbf{Mathematics Subject Classification 2020}: 11G05, 11R23.
\end{abstract}
\section{Introduction}
Throughout this article, $p$ will always denote an odd prime number. Let $E$ be an elliptic curve defined over a number field $F'$. If $E$ has good ordinary reduction at every prime of $F'$ above $p$ and $F$ is a finite extension of $F'$, the $p$-primary Selmer group of $E$ over the cyclotomic $\Zp$-extension $F^{\cyc}$ of $F$ is conjectured to be cotorsion over $\Zp\ps{\Ga}$ (meaning that its Pontryagin dual is torsion over $\Zp\ps{\Ga}$; see \cite{Maz}), where $\Ga= \Gal(F^{\cyc}/F)$. Granted this conjecture, Perrin-Riou \cite{PR84} and Schneider \cite{Sch83, Sch85} computed the $\Ga$-Euler characteristics of the aforementioned Selmer group and showed that its value is related to the $p$-part of the algebraic invariants
appearing in the formula of the Birch and Swinnerton-Dyer conjecture. Their calculations have since then been extended to higher dimensional $p$-adic Lie extensions (see \cite{CH,CSS,HV,Ze04,Ze09,Ze11}).

If $E$ has supersingular reduction at one prime above $p$, then the $p$-primary Selmer group of $E$ over  $F^{\cyc}$ is not expected to be  cotorsion over $\Zp\ps{\Ga}$ (see \cite{CH, Sch85}). When $F=F'=\QQ$, Kobayashi \cite{Kob} defined the  plus and minus Selmer groups of $E$ over $\QQ^\cyc$ by  constructing the plus and minus norm groups $\widehat{E}^\pm(\Qp^\cyc)$, which are subgroups of the formal group of $E$ at $p$. He was able to describe the algebraic structure of these plus and minus norm groups precisely and show that the plus and minus Selmer groups are cotorsion over $\Zp\ps{\Ga}$. These Selmer groups have been extended to different settings by various authors (see \cite{AL, BL, BL2, Kim07, KimPM, Kim14, KimPark, KO, LL, LeiS, LeiSuj, LZ, NS}). When $E$ is defined over $\QQ$ and $F$ is a number field where $p$ is unramified, Kim \cite{Kim07, KimPM} studied the structure of plus and minus Selmer groups over $F^\cyc$. In particular, he showed that these Selmer groups do not contain non-trivial submodules of finite index. This led to a formula of the $\Ga$-Euler characteristics of these Selmer groups under the assumption that the $p$-primary Selmer group of $E$ over $F$ is finite (see \cite{KimPM}). Remarkably, the Euler characteristics of the plus and minus Selmer groups turn out to be the same as the usual Selmer group in the ordinary case.

One of the key ingredients in Kim's works is a precise description of the algebraic structure of Kobayashi's plus and minus norm groups over $K^\cyc$, where $K$ is a finite unramified extension of $\Qp$. For the minus norm group, Kim's result is unconditional, whereas the plus norm group is studied under the hypothesis that $4$ does not divide $|K:\Qp|$ (see \cite{Kim07, Kim14}). In \cite{KO},  Kitajima and Otsuki were able to describe the plus norm groups even when $4$ divides $|K:\Qp|$. Furthermore, they relaxed the hypothesis that the elliptic curve $E$ is defined over $\QQ$ and  allowed $E$ to have mixed reduction types at primes above $p$. In their setting, $E$ is defined over a number field $F'$ with good reduction at all primes above $p$ and that if $u$ is a  prime of $F'$ above $p$ where $E$ has  supersingular reduction, then $F'_u=\Qp$. Let $F$ be a finite extension of $F'$ where the supersingular primes of $E$ above $p$ are unramified and let $\Sss$ denote the set of primes of $F$ lying above these supersingular primes. On choosing one of the two plus and minus norm subgroups for each prime of $\Sss$, we may define $2^{|\Sss|}$ signed Selmer groups. Such mixed signed Selmer groups were first considered by Kim in \cite{Kim14}. In \cite{AL}, the $\Ga$-Euler characteristics of these mixed signed Selmer groups have been computed. In a different vein, we may consider a $\Zp^d$-extensions of $F$, which we denote by $F_\infty$ and write $G=\Gal(F_\infty/F)$. We may define plus and minus Selmer groups of $E$ over $F_\infty$ (see \cite{Kim14, LeiSuj}). In \cite{LeiSuj}, assuming that $p$ splits completely in $F$, the $G$-Euler characteristics of the plus and minus (no mixed signs) Selmer groups have been computed.

In this article, we assume that our elliptic curve $E$ is defined over $F'$ with no additive reduction at all primes above $p$ (multiplicative reduction is allowed). 
We shall introduce certain hypotheses, labeled (S1)-(S5), in the main body of the article. Our goal is to compute the Akashi series of mixed signed Selmer groups over $F_\infty$ under hypotheses (S1)-(S4). Here, $F_\infty/F$ is a $\Zp^d$-extension and $F/F'$ is a finite extension, with $G=\Gal(F_\infty/F)$ as above.  As a by-product, we compute the $G$-Euler characteristics of these Selmer groups under the additional hypothesis (S5). Our main results are:

\begin{theorem}[{Theorem~\ref{main theorem}}]\label{thm:A}
Suppose that $(S1)-(S4)$ are satisfied.
Assume that the Pontryagin dual of a signed Selmer group of $E$ over $F^\cyc$, denoted by $X^{\fs}(E/F^{\cyc})$, is torsion over $\Zp\ps{\Ga}$. Then the Pontryagin dual of a signed Selmer group of $E$ over $F_\infty$, denoted by $X^{\fs}(E/F_{\infty})$, is torsion over $\Zp\ps{G}$, whose Akashi series is well-defined and is given by, up to a unit in $\Zp\ps\Ga$,
\[  T^r \cdot \Char_{\Ga}(X^{\fs}(E/F^{\cyc})),\]
where $r$ is the number of primes of $F^{\cyc}$ above $p$ with nontrivial decomposition group in $F_{\infty}/F^{\cyc}$ and at which $E$ has split multiplicative reduction, $T=\gamma-1$ with $\gamma$ being a topological generator of $\Ga$ and $\Char_{\Ga}(X^{\fs}(E/F^{\cyc}))$ denotes a characteristic power series of the $\Zp\ps\Ga$-module $X^{\fs}(E/F^{\cyc})$.
\end{theorem}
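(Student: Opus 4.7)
The plan is to set $H=\Gal(F_\infty/F^\cyc)$, so $H\cong\Zp^{d-1}$ and $G/H\cong\Ga$, and compute the Akashi series
\[
\Ak(X^{\fs}(E/F_\infty)) \;=\; \prod_{i\geq 0}\Char_{\Ga}\bigl(H_i(H,X^{\fs}(E/F_\infty))\bigr)^{(-1)^i}
\]
by relating each homology group to $X^{\fs}(E/F^\cyc)$ via a control diagram, and isolating the local defect as the $T^r$ factor.

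First, I would write down the defining short exact sequence of the signed Selmer group over $F_\infty$,
\[
0 \to \Sel^{\fs}(E/F_\infty) \to H^1_{\Sigma}(F_\infty,\Ep) \to \bigoplus_{v\in\Sigma} \mathcal{J}^{\fs}_v(E/F_\infty) \to 0,
\]
where $\Sigma$ is an appropriate finite set of primes and $\mathcal{J}^{\fs}_v$ is the local quotient by the signed local condition (plus/minus norm subgroup at $v\in\Sss$, the usual Kummer image at ordinary/multiplicative primes, and the unramified condition elsewhere). Dualising and applying the Hochschild--Serre spectral sequence $H_i(H,-)$ produces a five-term-type exact sequence relating $H_i(H,X^{\fs}(E/F_\infty))$ to the homology of the global and local pieces, together with their analogues at the $F^\cyc$-level. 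Under hypotheses (S1)--(S4), the Pontryagin dual $X^{\fs}(E/F^\cyc)$ is $\Zp\ps{\Ga}$-torsion, and standard results (Jannsen's spectral sequence, together with the vanishing of $H^2$ of the global Galois module in the signed situation — cf.\ the analysis in \cite{AL,LeiSuj}) force the higher global $H_i$-terms to be pseudo-null, hence contribute trivially to the Akashi series.

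Next, the local analysis at each $v\in\Sigma$ splits into cases. At primes of good ordinary and good supersingular reduction, the description of $\widehat{E}^{\pm}$ of Kobayashi and Kitajima--Otsuki, together with the fact that in each tower the decomposition data are compatible, shows that the local homology groups of $\mathcal{J}^{\fs}_v(E/F_\infty)^\vee$ are pseudo-null as $\Zp\ps{\Ga}$-modules, so that their alternating characteristic product is a unit. The only surviving contribution comes from split multiplicative primes $w$ of $F^\cyc$ whose decomposition group $H_w\subset H$ is non-trivial: via Tate uniformisation, $\Ep$ sits in an extension of an unramified module by $\mu_{p^\infty}$ (up to twist), so $E(F^\cyc_w)[p^\infty]$ is infinite and its Pontryagin dual contributes, for each such $w$, a factor $\Char_\Ga(\Zp)=T$ to the alternating product. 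Summing over the $r$ such primes of $F^\cyc$ yields the claimed $T^r$.

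Assembling everything, the Akashi series of $X^{\fs}(E/F_\infty)$ equals $\Char_{\Ga}(X^{\fs}(E/F^\cyc))\cdot T^r$ up to a unit in $\Zp\ps\Ga$, which in particular shows the Akashi series is well-defined and hence that $X^{\fs}(E/F_\infty)$ is $\Zp\ps{G}$-torsion (as a non-torsion module cannot have its $H$-homology be $\Zp\ps\Ga$-torsion in each degree). The main technical obstacle I expect is step two: proving that the higher global $H_i$-terms are pseudo-null and that the control map at the level of signed local conditions has pseudo-null kernel/cokernel at good reduction primes. This requires a delicate application of Poitou--Tate duality combined with the explicit algebraic structure of the plus/minus norm subgroups in the mixed-signed setting (using Kitajima--Otsuki's refinement), and it is where hypotheses (S1)--(S4) are essentially used.
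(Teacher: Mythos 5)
Your overall strategy coincides with the paper's: a control (snake-lemma) argument comparing the defining sequences of $\Sel^{\fs}(E/F_\infty)$ and $\Sel^{\fs}(E/F^\cyc)$, Hochschild--Serre to kill the higher $H$-cohomology of the global term, and a case-by-case local analysis in which only split multiplicative primes with $\dim H_v\geq 1$ survive, each contributing $\Char_\Ga(D_v(F_v^\cyc)^\vee)=\Char_\Ga((\Qp/\Zp)^\vee)=T$ (note that the relevant object is the Coates--Greenberg quotient $D_v$, not $E(F^\cyc_w)[p^\infty]$ itself, and at ordinary primes the individual local homology groups need not be pseudo-null --- their alternating product is a unit because $D_v(F_{\infty,w})^\vee$ is finitely generated over $\Zp$, so Lemma~\ref{Akashi fg Zp} applies). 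However, there are two genuine gaps. First, a circularity: you take the surjectivity of $H^1(G_\Sigma(F_\infty),\Ep)\to\bigoplus_w J_w^{\fs}(E/F_\infty)$ for granted as ``the defining short exact sequence,'' but this surjectivity is not automatic --- in the paper it is Proposition~\ref{torsion2 H2}, whose proof requires knowing in advance that $X^{\fs}(E/F_\infty)$ is $\Zp\ps{G}$-torsion. You instead propose to deduce torsionness at the end from the well-definedness of the Akashi series, so your argument uses the conclusion as an input. The paper breaks the circle by first proving torsionness directly (Lemma~\ref{Galois descent Selmer} gives $X^{\fs}(E/F_\infty)_H\twoheadrightarrow$-onto-$X^{\fs}(E/F^\cyc)$ up to a finitely generated $\Zp$-module, whence Proposition~\ref{tor relate tor2} via descent), and only then invokes the resulting short exact sequences.

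Second, the step you yourself flag as ``the main technical obstacle'' --- exact descent of the signed local condition at supersingular primes --- is precisely the new content of the paper and is not supplied by your sketch. What is needed is not merely pseudo-nullity but that the map $J_v^{\fs}(E/F^\cyc)\to \big(J_w^{\fs}(E/F_\infty)\big)^{H_v}$ is an isomorphism and that $H^i\big(H_v, J_w^{\fs}(E/F_\infty)\big)=0$ for $i\geq 1$. The paper obtains this (Proposition~\ref{H-homology of supersingular} and Corollary~\ref{H-homology of supersingular2}) not by Poitou--Tate duality but by showing the Pontryagin dual of $H^1(F_{\infty,w},\Ep)/\big(\widehat{E}^{\pm}(F_{\infty,w})\otimes\Qp/\Zp\big)$ is $\Zp\ps{G}$-free: the inputs are Coates--Greenberg's vanishing $H^i(K_\infty,\widehat{E}(\bar K))=0$, Kim's exact sequence relating $\widehat{E}^{\pm}$ to $\widehat{E}$ over the $\Zp^2$-tower, Kitajima--Otsuki's cofreeness over the cyclotomic line, and the module-theoretic freeness criterion of Proposition~\ref{free module2}. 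Without this input (and hypothesis (S4), which guarantees the supersingular local towers are of the unramified-times-cyclotomic shape where these results apply), the local analysis at supersingular primes does not go through.
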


\begin{theorem}[{Theorem~\ref{thm:Euler}}]\label{thm:E}
Suppose that $(S1)-(S5)$ are satisfied. If  the $p$-primary Selmer group of $E$ over $F$ is finite,  then the $G$-Euler characteristic of $X^{\fs}(E/F_{\infty})$ is well-defined and is given by
\[ |\sha(E/F)[p^{\infty}]|\times\prod_{v\in \Sigma'} c_v^{(p)}\times\prod_{v\in \So}(d_v^{(p)})^2.\]
Here, $\Sigma'$ denotes the set of primes of $F$ where $E$ has bad reduction, $c_v^{(p)}$ is the highest power of $p$ dividing $|E(F_v):E_0(F_v)|$, where $E_0(F_v)$ is the subgroup of $E(F_v)$ consisting of points with nonsingular reduction modulo $v$,  $\So$ denotes the set of primes of $F$ lying above $p$ where $E$ has good ordinary reduction and $d_v^{(p)}$ is the highest power of $p$ dividing $|\tilde{E}_v(f_v)|$, where $f_v$ is the residue field of $F_v$.
\end{theorem}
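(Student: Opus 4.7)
My plan is to deduce Theorem \ref{thm:E} from Theorem \ref{thm:A} together with the $\Ga$-Euler characteristic formula for signed Selmer groups over the cyclotomic $\Zp$-extension established in \cite{AL}. The first step is to use hypothesis (S5) to force the integer $r$ of Theorem \ref{thm:A} to vanish, so that
\[
\Ak\bigl(X^{\fs}(E/F_\infty)\bigr) \;=\; \Char_\Ga\bigl(X^{\fs}(E/F^\cyc)\bigr) \cdot u
\]
for some unit $u \in \Zp\ps\Ga$; this is necessary, because if $r \geq 1$ the Akashi series would vanish at $T=0$ and a finite $G$-Euler characteristic could not be expected. The clean form of the target formula (no extra correction at split-multiplicative primes) is consistent with this.

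Next I would carry out the descent from the Akashi series to the Euler characteristic. The principle I would use is that, for a finitely generated torsion $\Zp\ps{G}$-module $M$ whose Akashi series does not vanish at $T=0$, the $G$-Euler characteristic is well-defined and satisfies $\chi(G, M) = \chi(\Ga, N)$ for any $\Zp\ps\Ga$-torsion $N$ with $\Char_\Ga(N) = \Ak(M)$. The mechanism is a Hochschild--Serre spectral sequence for $H = \Gal(F_\infty/F^\cyc) \subset G$, which is controlled provided that $X^{\fs}(E/F^\cyc)$ has no nontrivial pseudo-null $\Zp\ps\Ga$-submodules; this absence-of-pseudo-null-submodules statement would be proved by adapting the control-theorem arguments of \cite{KimPM, Kim14, AL} to the present mixed-reduction setting under (S1)-(S5). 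Combined with the previous step, this yields $\chi\bigl(G, X^{\fs}(E/F_\infty)\bigr) = \chi\bigl(\Ga, X^{\fs}(E/F^\cyc)\bigr)$.

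Finally I would invoke the formula of \cite{AL}, which under (S1)-(S4) and the finiteness of $\Selp(E/F)$ gives
\[
\chi\bigl(\Ga, X^{\fs}(E/F^\cyc)\bigr) \;=\; |\sha(E/F)[p^\infty]| \cdot \prod_{v \in \Sigma'} c_v^{(p)} \cdot \prod_{v \in \So} (d_v^{(p)})^2,
\]
where the supersingular primes in $\Sss$ drop out because the signed Coleman maps defining the local conditions have unit-valued images modulo $T$. This is precisely the formula asserted in Theorem \ref{thm:E}. The hard part of the argument is the descent step --- ensuring finiteness of $H^i(G, X^{\fs}(E/F_\infty))$ for every $i$, not merely $T$-regularity of $\Ak(X^{\fs}(E/F_\infty))$ --- which is where the structural hypotheses (S1)-(S5) and the existing literature on signed Iwasawa theory do the real work.
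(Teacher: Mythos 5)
Your overall skeleton matches the paper's: under (S5)(a) there are no multiplicative primes above $p$, so $r=0$ in Theorem~\ref{thm:A}, and the formula then follows from Proposition~\ref{Akashi Euler} combined with the cyclotomic Euler characteristic formula of \cite{AL} --- exactly the paper's route. The gap is in the step you yourself flag as the hard part: well-definedness of $\chi(G,X^{\fs}(E/F_\infty))$. The ``principle'' you invoke --- that non-vanishing of the Akashi series at $T=0$ already yields a well-defined $G$-Euler characteristic equal to that of any cyclotomic module with the same characteristic series --- is false. For $G\cong\Zp^2=H\times\Ga$ the trivial module $M=\Zp$ has $\Ak_H(M)=\Char_\Ga(\Zp)\cdot\Char_\Ga(\Zp)^{-1}=1$, a unit, yet $H_0(G,M)=\Zp$ is infinite; Proposition~\ref{Akashi Euler} only goes in the other direction. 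Your fallback mechanism --- Hochschild--Serre for $H=\Gal(F_\infty/F^\cyc)$ controlled by the absence of pseudo-null (i.e.\ finite) submodules of $X^{\fs}(E/F^\cyc)$ --- is also not the right tool: that condition lives entirely at the cyclotomic level and does not by itself bound the $H$- or $G$-cohomology of $\Sel^{\fs}(E/F_\infty)$.

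The paper's actual argument is different and more concrete. Since $G\cong\Zp^d$, Wadsley's theorem reduces well-definedness of $\chi(G,X^{\fs}(E/F_\infty))$ to the finiteness of $\Sel^{\fs}(E/F_\infty)^{G}$. This is proved by a control diagram comparing $\Sel(E/F)$ (finite by hypothesis) with $\Sel^{\fs}(E/F_\infty)^{G}$: the middle global map is an isomorphism, the local kernels $\ker l_v$ are finite for $v\in\Sigma\setminus\Sss$ by Greenberg's results, and for $v\in\Sss$ one needs Kim's Proposition~2.18, which shows that $E(F_v)\otimes\Qp/\Zp\to\bigl(\widehat{E}^{\pm}(F_{\infty,w})\otimes\Qp/\Zp\bigr)^{G_v}$ is an isomorphism. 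That last step is precisely where hypothesis (S5)(b) (namely $4\nmid|F_v:\Qp|$ whenever $s_v=+$) enters, a role your proposal never accounts for; without this local input the argument does not close.
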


The structure of our article is as follows. In \S\ref{S:alg}, we gather  preliminary algebraic results which will be used in subsequent sections of the article. In particular, we review the definition and some basic properties of Akashi series in \S\ref{Akashi section} and prove some basic results on the structure of certain Iwasawa modules in \S\ref{S:modules}. In \S\ref{elliptic curve over local}, we study the local cohomology of elliptic curves. We consider the ordinary and supersingular cases separately. In the supersingular case, we build on results of Kitajima and Otsuki for the cyclotomic $\Zp$-extension to study the {structure of the} local quotient $\displaystyle\frac{H^1(K_\infty,E[p^\infty])}{\widehat{E}^\pm(K_\infty)\otimes\Qp/\Zp}$, where $K_\infty$ is a $\Zp^2$-extension of a finite unramified extension of $\Qp$. This quotient is crucially used to define  the signed Selmer groups. We show that its Pontryagin dual is free over a two-variable Iwasawa algebra (see Corollary~\ref{H-homology of supersingular2}). This result  is one of the key ingredients in  the proof of Theorem~\ref{thm:A} and may be of independent interest. In \S\ref{Selmer}, we give the definition of mixed signed Selmer groups over the cyclotomic $\Zp$-extension, as well as a $\Zp^d$-extension. We  show how these Selmer groups can be related via Galois descent (see Lemma~\ref{Galois descent Selmer}), which also plays an important role in the proof of Theorem~\ref{thm:A}. Furthermore, we study the cotorsionness of these Selmer groups as well as a natural extension of the $\fM_H(G)$-conjecture of Coates et al. Finally, we put everything together to prove Theorems~\ref{thm:A} and \ref{thm:E} in \S\ref{proofs}. At the end of the article, we show that we may use these theorems to study the vanishing of $X^\fs(E/F_\infty)$  (see Corollaries~\ref{cor:vanish} and \ref{one for all}).

\subsection*{Acknowledgement}We would like to thank the anonymous referee for constructive comments on an earlier version of this article. The authors' research is partially supported by:  the NSERC Discovery Grants Program RGPIN-2020-04259 and RGPAS-2020-00096 (Lei) and the National Natural Science Foundation of China under Grant numbers  11550110172 and  11771164 (Lim).

\section{Preliminary algebraic results}\label{S:alg}

\subsection{Review of Akashi series} \label{Akashi section}
In this subsection, {$G$ denotes a fixed} compact pro-$p$ $p$-adic Lie group without $p$-torsion (we will mostly work with $G$ which is isomorphic to $\Zp^d$ for some integer $d$). Furthermore, we suppose that there exists  a closed normal
subgroup $H$ of $G$ such that $\Ga:= G/H\cong \Zp$.
\begin{defn}
If $M$ is a finitely generated $\Zp\ps{\Ga}$-module, {$\Char_\Gamma(M)$ denotes} a characteristic power series of $M$.
\end{defn}
Note that $\Char_\Gamma(M)$ is well-defined up to a unit in $\Zp\ps{\Ga}$. If $\Char_\Ga(M)$ is a
unit, we shall write $\Char_\Ga(M)=1$. Following  \cite{CFKSV, CSS, Ze11}, we have the following definition.
\begin{defn}
Let $M$ be  $\Zp\ps G$-module. We say that the
Akashi series of $M$ is well-defined if $H_i(H,M)$ is
$\Zp\ps{\Ga}$-torsion for every $i\ge0$. In this case, we
define $\Ak_H(M)$ to be the ($H$-)Akashi series of $M$, which is given by \[ \Ak_H(M):=\displaystyle \prod_{i\geq 0}\Char_\Ga H_i(H,M)^{(-1)^i}.\]
\end{defn}

 Note that the Akashi series is only well-defined up to a
unit in $\Zp\ps{\Ga}$. If the Akashi series of $M$ is a
unit in $\Zp\ps{\Ga}$, we shall write $\Ak_H(M)=1$. Note that since $G$ (and hence $H$) has
no $p$-torsion, $H$ has finite $p$-cohomological dimension, and
therefore, the alternating product is a finite product.

\begin{lemma} \label{Akashi exact seq}
Suppose that we are given a short exact sequence of $\Zp\ps{G}$-modules
\[ 0\lra M'\lra M\lra M''\lra 0. \]
If any two of the modules have well-defined Akashi series, so does the third. In this case,  we have
\[ \Ak_H(M) = \Ak_H(M')\Ak_H(M''). \]
\end{lemma}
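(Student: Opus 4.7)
The plan is to deduce everything from the long exact sequence of $H$-homology
\[ \cdots \lra H_{i+1}(H,M'') \lra H_i(H,M') \lra H_i(H,M) \lra H_i(H,M'') \lra H_{i-1}(H,M') \lra \cdots \]
attached to the given short exact sequence. Since $G$, and hence $H$, is a compact pro-$p$ $p$-adic Lie group without $p$-torsion, $H$ has finite $p$-cohomological dimension, so this long exact sequence is bounded: $H_i(H,-)=0$ for $i$ larger than $\dim H$.

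First I would settle the well-definedness assertion. The class of finitely generated $\Zp\ps{\Ga}$-torsion modules is closed under subquotients and extensions. Suppose, for instance, that $M$ and $M'$ have well-defined Akashi series, i.e.\ $H_i(H,M)$ and $H_i(H,M')$ are $\Zp\ps{\Ga}$-torsion for every $i$. Exactness of
\[ H_i(H,M) \lra H_i(H,M'') \lra H_{i-1}(H,M') \]
realises $H_i(H,M'')$ as an extension of a submodule of $H_{i-1}(H,M')$ by a quotient of $H_i(H,M)$, hence as a torsion module. The other two cases (dropping $M'$, or $M''$) are entirely analogous, using the appropriate three-term segment of the long exact sequence.

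For the multiplicativity formula, I would invoke the standard fact that $\Char_\Ga$ is multiplicative on short exact sequences of finitely generated $\Zp\ps{\Ga}$-torsion modules. Truncating the long exact sequence into short exact pieces via kernels and images, and multiplying the resulting identities with alternating signs, every image term $\image(H_i(H,M'')\to H_{i-1}(H,M'))$ appears twice with opposite signs and cancels. What survives is the identity
\[ \prod_i \Char_\Ga\bigl(H_i(H,M)\bigr)^{(-1)^i} \;=\; \prod_i \Char_\Ga\bigl(H_i(H,M')\bigr)^{(-1)^i}\cdot \prod_i \Char_\Ga\bigl(H_i(H,M'')\bigr)^{(-1)^i} \]
in $\Zp\ps{\Ga}$ up to a unit, which is precisely $\Ak_H(M)=\Ak_H(M')\Ak_H(M'')$.

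The substantive content here is really just the two inputs (boundedness of the $H$-homology and multiplicativity of $\Char_\Ga$ on exact sequences); no step requires extra hypotheses beyond those already imposed on $G$. The only thing to be careful about is the bookkeeping when chopping the long exact sequence into short exact sequences, so that the cancellation of intermediate images is unambiguous and the signs line up. This is the one spot where a reader might want to see the argument spelled out, but it is routine.
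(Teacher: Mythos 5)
Your argument is correct and is precisely the standard one: the paper itself gives no proof, simply citing \cite[Lemma 4.1]{CSS} and \cite[Proposition 2.2]{Ze11}, whose proofs run exactly along the lines you describe (long exact sequence of $H$-homology, bounded since $H$ has finite $p$-cohomological dimension, closure of finitely generated torsion $\Zp\ps{\Ga}$-modules under subquotients and extensions, and multiplicativity of $\Char_\Ga$ over the truncated short exact pieces). The only point worth making explicit is that each $H_i(H,M)$ is finitely generated over $\Zp\ps{\Ga}$ (so that $\Char_\Ga$ makes sense), which holds because $M$ is finitely generated over $\Zp\ps{G}$.
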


\begin{proof}
  See \cite[Lemma 4.1]{CSS} or \cite[Proposition 2.2]{Ze11}.
\end{proof}

Since the group $G$ we will work with is isomorphic $\Zp^d$, the following lemma will be useful in our subsequent discussion.

\begin{lemma} \label{Akashi fg Zp}
Suppose that $G\cong H\times \Ga$ with $\dim H\geq 1$. For every $\Zp\ps{G}$-module $M$ that is finitely generated over $\Zp$, we have
\[ \Ak_H(M) = 1. \]
\end{lemma}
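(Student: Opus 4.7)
The plan is to compute $H_\bullet(H,M)$ via a Koszul resolution, use multiplicativity of characteristic series to push the alternating product back to the level of the complex, and observe that the resulting exponent of $\Char_\Ga(M)$ is an alternating sum of binomial coefficients that vanishes.

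Since $G\cong\Zp^d$ with $G=H\times\Ga$ and $\Ga\cong\Zp$, the subgroup $H$ is isomorphic to $\Zp^n$ for $n=d-1\ge 1$. I would fix topological generators $h_1,\ldots,h_n$ of $H$ and form the Koszul complex $K_\bullet=K_\bullet(h_1-1,\ldots,h_n-1;M)$, a bounded complex of $\Zp\ps{G}$-modules with $K_i=M^{\binom{n}{i}}$ and homology $H_\bullet(H,M)$. Because $G$ is a direct product, the $\Ga$-action commutes with multiplication by each $h_j-1$, so every differential is $\Zp\ps{\Ga}$-linear. Moreover, since $M$ is finitely generated over $\Zp$, every $K_i$ and every $H_i(H,M)$ is finitely generated over $\Zp$, hence $\Zp\ps{\Ga}$-torsion, and all the characteristic series in sight are well-defined.

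Splicing $K_\bullet$ into the short exact sequences $0\to Z_i\to K_i\to B_{i-1}\to 0$ and $0\to B_i\to Z_i\to H_i(H,M)\to 0$ and using multiplicativity of $\Char_\Ga$ in short exact sequences of torsion modules, one obtains
\[ \Ak_H(M)=\prod_{i\ge 0}\Char_\Ga\bigl(H_i(H,M)\bigr)^{(-1)^i}=\prod_{i=0}^n\Char_\Ga(K_i)^{(-1)^i}=\Char_\Ga(M)^{\sum_{i=0}^n(-1)^i\binom{n}{i}}=1, \]
where the last equality uses $\sum_{i=0}^n(-1)^i\binom{n}{i}=(1-1)^n=0$ for $n\ge 1$.

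The only step that might look delicate is the splicing argument, but it is a routine finite induction on the length of the bounded complex, so I do not anticipate any real obstacle.
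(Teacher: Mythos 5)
Your argument is correct, and it takes a genuinely different route from the paper, which offers no proof at all but simply cites \cite[Lemma 4.5]{CSS} and \cite[Proposition 2.3]{Ze11}. Those references prove the statement for an arbitrary (possibly non-abelian) $H$ without $p$-torsion, essentially by discarding the finite $\Zp$-torsion of $M$ and comparing $H_\bullet(H,M)\otimes\Qp$ with Lie algebra homology, where the same vanishing alternating sum $\sum_i(-1)^i\dim\Lambda^i\mathfrak{h}=0$ drives the conclusion; your integral Koszul computation is a clean, self-contained avatar of this in the abelian case. All the ingredients you need are in place: $\Zp\ps{H}\cong\Zp\ps{T_1,\dots,T_n}$ with $T_j=h_j-1$ a regular sequence, so the Koszul complex resolves $\Zp$ and computes $H_\bullet(H,M)$ for the compact module $M$; the differentials are $\Zp\ps{\Ga}$-linear because $G$ is a direct product (indeed commutative); and every subquotient of a module finitely generated over $\Zp$ is a finitely generated torsion $\Zp\ps{\Ga}$-module, so each $\Char_\Ga$ appearing in the splicing is a nonzero principal ideal and may be cancelled. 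The one caveat is generality: the lemma is stated under the section's standing hypothesis that $G$ is any compact pro-$p$ $p$-adic Lie group without $p$-torsion, whereas your proof covers only $H\cong\Zp^{n}$ (equivalently $G\cong\Zp^d$). Since the paper applies the lemma exclusively to $\Zp^d$-extensions (in Lemma \ref{ordinary lemma}(c)), this restriction costs nothing here, but you should state it explicitly if your argument is meant to replace the citation rather than supplement it.
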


\begin{proof}
  See \cite[Lemma 4.5]{CSS} or \cite[Proposition 2.3]{Ze11}.
\end{proof}

We end this section by recalling a link between Akashi series and  Euler
characteristics. 

\begin{defn}
The $G$-Euler characteristics of a
$\Zp\ps{G}$-module $M$ is said to be well-defined if $H_i(G,M)$ is finite for each
$i\geq 0$. In this case, the $G$-Euler
characteristics is given by
\[ \chi(G,M) = \prod_{i\geq 0}|H_i(G,M)|^{(-1)^i}. \]
\end{defn}
Again, since $G$ has no $p$-torsion, the product in the definition of $\chi(G,M)$ is finite.

\begin{proposition} \label{Akashi Euler}
 Let $G$ be a compact $p$-adic group
without $p$-torsion, and let $H$ be a closed normal subgroup of $G$
with $G/H\cong \Zp$. Let $M$ be a finitely generated
$\Zp\ps{G}$-module whose $G$-Euler characteristics is well-defined.
Then the Akashi series of $M$ is well-defined and we have
 \[  \chi(G,M) = |\vp(\Ak_H(M))|_{p}^{-1}, \]
 where $|~|_{p}$ is the $p$-adic norm with $|p|_{p} = p^{-1}$,
 and $\vp$ is the augmentation map from $\Zp\ps{\Ga}$ to $\Zp$.
\end{proposition}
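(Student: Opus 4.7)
The plan is to use the Hochschild--Serre spectral sequence to reduce to the case $G=\Ga\cong\Zp$ and a torsion $\Zp\ps{\Ga}$-module, and then handle that case via the structure theorem. Since $\Ga\cong\Zp$ has $p$-cohomological dimension one, the homology spectral sequence $E^2_{i,j}=H_i(\Ga,H_j(H,M))\Rightarrow H_{i+j}(G,M)$ collapses into short exact sequences
\[0\lra H_0(\Ga,H_j(H,M))\lra H_j(G,M)\lra H_1(\Ga,H_{j-1}(H,M))\lra 0, \qquad j\ge 0.\]

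First, I would show $\Ak_H(M)$ is well-defined. If some $H_j(H,M)$ had positive $\Zp\ps{\Ga}$-rank $r>0$, then $H_0(\Ga,H_j(H,M))=H_j(H,M)/(\gamma-1)H_j(H,M)$ would have positive $\Zp$-rank, hence infinite order, and the exact sequence above would force $H_j(G,M)$ to be infinite, contradicting the hypothesis that $\chi(G,M)$ is well-defined. Since $H$ has finite $p$-cohomological dimension, taking the alternating product of orders in the collapsed spectral sequence then yields
\[\chi(G,M)=\prod_{j\ge 0}\chi(\Ga,H_j(H,M))^{(-1)^j}.\]

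To conclude, I would verify the one-variable statement: for every finitely generated torsion $\Zp\ps{\Ga}$-module $N$ with finite $\Ga$-Euler characteristic, $\chi(\Ga,N)=|\vp(\Char_\Ga N)|_p^{-1}$. Both sides are multiplicative in short exact sequences and both equal $1$ on finite modules (on the left, using $|N^\Ga|=|N_\Ga|$ for finite $N$ via the four-term sequence for multiplication by $\gamma-1$; on the right, because finite modules have trivial characteristic ideal), so by the structure theorem one may reduce to the cyclic case $N=\Zp\ps{\Ga}/(f)$, where the snake lemma applied to multiplication by $T=\gamma-1$ gives $|N/TN|/|N[T]|=|f(0)|_p^{-1}=|\vp(f)|_p^{-1}$. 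Combining this with the previous display,
\[|\vp(\Ak_H(M))|_p^{-1}=\prod_{j\ge 0}|\vp(\Char_\Ga H_j(H,M))|_p^{-(-1)^j}=\prod_{j\ge 0}\chi(\Ga,H_j(H,M))^{(-1)^j}=\chi(G,M).\]
The main obstacle is the first step: extracting torsionness of each $H_j(H,M)$ from the finiteness of the $H_k(G,M)$; once this is established, the remainder is a routine one-variable Iwasawa-theoretic computation.
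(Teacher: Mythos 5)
Your argument is correct: the paper itself gives no proof of this proposition, merely citing \cite[Theorem 3.6]{CFKSV} and \cite[Lemma 4.2]{CSS}, and your two-column Hochschild--Serre spectral sequence argument (degenerating since $\Gamma\cong\Zp$ has homological dimension one), combined with the one-variable formula $\chi(\Gamma,N)=|\vp(\Char_\Gamma N)|_p^{-1}$, is exactly the standard proof found in those references. The key step you flag --- that finiteness of $H_j(G,M)$ forces each $H_j(H,M)$ to be $\Zp\ps{\Ga}$-torsion --- does go through, since $H_0(\Gamma,H_j(H,M))$ injects into $H_j(G,M)$ and a finitely generated $\Zp\ps{\Ga}$-module with finite $\Gamma$-coinvariants is necessarily torsion.
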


\begin{proof}
  See \cite[Theorem 3.6]{CFKSV} or \cite[Lemma 4.2]{CSS}.
\end{proof}

\subsection{Modules over two-variable Iwasawa algebras}\label{S:modules}

In this subsection, we will study modules over $\Zp\ps{G}$, where $G$ is a $p$-adic group isomorphic to $\Zp^2$. We begin by  recalling the following result on $\Zp\ps{\Ga}$-modules, where $\Ga\cong\Zp$ and we write $\Ga_n=\Ga^{p^n}$.

\begin{proposition} \label{free module}
Let $M$ be a finitely generated $\Zp\ps{\Ga}$-module and {$r$ a non-negative} integer such that $M_{\Ga_n}$ is a free $\Zp$-module of rank $rp^n$ for every $n\ge0$. Then $M$ is a free $\Zp\ps{\Ga}$-module of rank $r$.
\end{proposition}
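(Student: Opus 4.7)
The plan is to construct a surjection $\Zp\ps{\Ga}^r \twoheadrightarrow M$ via Nakayama's lemma, and then show it is injective by comparing $\Ga_n$-coinvariants for all $n$.

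First, specialising the hypothesis to $n=0$ gives that $M_{\Ga}$ is a free $\Zp$-module of rank $r$. Pick $m_1, \ldots, m_r \in M$ whose images form a basis of $M_\Ga$. Since $\Zp\ps{\Ga}$ is a Noetherian local ring with maximal ideal $\mathfrak{m} = (p, \gamma - 1)$, and since $M_\Ga / p M_\Ga = M/\mathfrak{m}M$ has dimension $r$ over $\FF_p$, the topological Nakayama's lemma implies that $m_1, \ldots, m_r$ generate $M$ over $\Zp\ps{\Ga}$. This yields a surjection
\[
\phi : \Zp\ps{\Ga}^r \twoheadrightarrow M.
\]

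Next, set $\omega_n = \gamma^{p^n} - 1$, so that $\Zp\ps{\Ga}/\omega_n\Zp\ps{\Ga} = \Zp[\Ga/\Ga_n]$, which is free of rank $p^n$ over $\Zp$. Reducing $\phi$ modulo $\omega_n$ gives a surjection of $\Zp$-modules
\[
\phi_n : \Zp[\Ga/\Ga_n]^r \twoheadrightarrow M_{\Ga_n}.
\]
By hypothesis, the target is free of rank $rp^n$ over $\Zp$, which matches the rank of the source. A surjection between free $\Zp$-modules of the same finite rank has a kernel that is torsion-free of rank $0$, hence zero; thus each $\phi_n$ is an isomorphism.

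Finally, let $N = \ker \phi$. Since $\phi_n$ is injective, the image of $N$ in $\Zp\ps{\Ga}^r / \omega_n \Zp\ps{\Ga}^r$ vanishes, so $N \subseteq \omega_n \Zp\ps{\Ga}^r$ for every $n \geq 0$. Using the factorisation $\omega_n = \gamma - 1$ times the cyclotomic factors $\Phi_{p^i}(\gamma)$ for $1 \leq i \leq n$, each factor lies in $\mathfrak{m}$, so $\omega_n \in \mathfrak{m}^{n+1}$. By the Krull intersection theorem applied to the finitely generated $\Zp\ps{\Ga}$-module $\Zp\ps{\Ga}^r$, we conclude $\bigcap_n \omega_n \Zp\ps{\Ga}^r = 0$, hence $N = 0$ and $\phi$ is an isomorphism.

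I do not anticipate a serious obstacle: the two delicate points are the applicability of the topological Nakayama lemma to $\Zp\ps{\Ga}$-modules (standard for compact modules over complete Noetherian local rings) and the vanishing of $\bigcap_n \omega_n \Zp\ps{\Ga}^r$ (standard for Iwasawa algebras). The numerical coincidence $\rank_{\Zp} M_{\Ga_n} = rp^n = \rank_{\Zp}\Zp[\Ga/\Ga_n]^r$ is what forces $\phi_n$ to be an isomorphism for every $n$, and this is really the heart of the argument.
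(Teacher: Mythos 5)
Your proof is correct. The paper itself does not reproduce an argument here; it simply cites the ``General Lemma'' on p.~207 of Saikia \cite{Sai}, and your proof is precisely the standard argument behind that lemma: Nakayama applied to $M_{\Ga}/pM_\Ga=M/\mathfrak{m}M$ to produce a surjection $\phi\colon\Zp\ps{\Ga}^r\twoheadrightarrow M$, the rank count $\rank_{\Zp}M_{\Ga_n}=rp^n=\rank_{\Zp}\Zp[\Ga/\Ga_n]^r$ forcing each $\phi_n$ to be an isomorphism, and the vanishing of $\bigcap_n\omega_n\Zp\ps{\Ga}^r$ (via $\omega_n\in\mathfrak{m}^{n+1}$ and Krull intersection) to kill $\ker\phi$. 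All steps check out.
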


\begin{proof}
See \cite[pp. 207, General Lemma]{Sai}.
\end{proof}

Now, fix two subgroups $H$ and $\Ga$ of $G$ so that $G\cong  H\times \Ga$ and $H\cong\Ga\cong\Zp$. For an integer $n\ge0$, we write $H_n=H^{p^n}$ and $G_n = H_n\times \Ga$ (not to be confused with $H^{p^n}\times \Ga^{p^n}$!). Let $M$ be a finitely generated $\Zp\ps{G}$-module. Since $G_n$ is a subgroup of $G$ of finite index, $M$ is also finitely generated over $\Zp\ps{G_n}$. It then follows from \cite[Lemma 4.5]{LimFine} that  $M_{H_n}$ and $H_1(H_n, M)$ are finitely generated $\Zp\ps{\Ga}$-modules. Since $H_n\cong\Zp$, we can identify $M^{H_n}$ with $H_1(H_n, M)$, and in particular, $M^{H_n}$ is finitely generated over $\Zp\ps{\Ga}$. We now come to the goal of this subsection, which is to prove the following analogue of Proposition \ref{free module}.

\begin{proposition} \label{free module2}
Let $M$ be a finitely generated $\Zp\ps{G}$-module and $r\ge0$ an integer such that  $M_{H_n}$ is a free $\Zp\ps{\Ga}$-module of rank $rp^n$ for every $n\ge0$. Then $M$ is a free $\Zp\ps{G}$-module of rank $r$.
\end{proposition}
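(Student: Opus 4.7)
The plan is to adapt the proof of Proposition \ref{free module} with $R:=\Zp\ps{\Ga}$ in place of $\Zp$. I start by lifting an $R$-basis of the free rank-$r$ module $M_H$ to elements $x_1,\dots,x_r\in M$. Since $h-1$ (where $h$ is a topological generator of $H$) lies in the maximal ideal $\mathfrak m$ of the complete Noetherian local ring $\Zp\ps{G}$, Nakayama's lemma implies that the $x_i$ generate $M$ over $\Zp\ps{G}$, producing a surjection $\pi:\Zp\ps{G}^r\twoheadrightarrow M$ whose kernel $N$ I must show is zero.

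For each $n\ge 0$, applying $H_n$-coinvariants to $\pi$ yields a surjective $R$-linear map
\[
\pi_n \colon \bigl(\Zp\ps{G}^r\bigr)_{H_n} \;=\; R[H/H_n]^r \;\longrightarrow\; M_{H_n}.
\]
The source is a free $R$-module of rank $rp^n$ because $|H/H_n|=p^n$, while the target is free of the same rank by the hypothesis of the proposition. A surjection between two finitely generated free modules of the same rank over a commutative ring is automatically an isomorphism (for instance, because a surjective endomorphism of a finitely generated module over a commutative ring is injective). Hence every $\pi_n$ is an isomorphism; this rank-matching identification is the main substantive step of the argument.

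Injectivity of $\pi_n$ is equivalent to the inclusion $N\subseteq (h^{p^n}-1)\Zp\ps{G}^r$. Using the cyclotomic factorisation $h^{p^n}-1=(h-1)\Phi_p(h)\Phi_{p^2}(h)\cdots\Phi_{p^n}(h)$, and the fact that $\Phi_{p^k}(1)=p\in\mathfrak m$ forces each cyclotomic factor $\Phi_{p^k}(h)\in\mathfrak m$, one obtains $h^{p^n}-1\in\mathfrak m^{n+1}$. Consequently
\[
N \;\subseteq\; \bigcap_{n\ge 0} (h^{p^n}-1)\Zp\ps{G}^r \;\subseteq\; \bigcap_{n\ge 0} \mathfrak m^{n+1}\Zp\ps{G}^r \;=\; 0
\]
by Krull's intersection theorem applied to the finitely generated $\Zp\ps{G}$-module $\Zp\ps{G}^r$. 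Thus $N=0$ and $M\cong\Zp\ps{G}^r$. Beyond the rank-matching step, the remaining ingredients (Nakayama, the cyclotomic factorisation of $h^{p^n}-1$, and Krull's intersection theorem) are all standard.
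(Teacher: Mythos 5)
Your proof is correct, but it reaches the conclusion by a genuinely different route from the paper. Both arguments begin identically: Nakayama's lemma applied to the local ring $\Zp\ps{G}$ (using that $h-1$ lies in the maximal ideal) produces a surjection $\pi\colon\Zp\ps{G}^r\twoheadrightarrow M$, and the whole issue is to kill its kernel. The paper does this by a rank count: Lemma~\ref{rank formula} (which rests on a homological result from \cite{LimFine} together with the stabilization Lemma~\ref{stabilize submodule}) shows $\rank_{\Zp\ps{G}}(M)=r$, so the kernel is $\Zp\ps{G}$-torsion, and a torsion submodule of the free module $\Zp\ps{G}^r$ is zero. You instead transplant the proof of the one-variable statement (Proposition~\ref{free module}) wholesale, with $\Zp\ps{\Ga}$ playing the role of $\Zp$: freeness of $M_{H_n}$ at \emph{every} level $n$, combined with the fact that a surjection of finitely generated free modules of equal rank over a commutative ring is an isomorphism, forces $\ker\pi\subseteq(h^{p^n}-1)\Zp\ps{G}^r\subseteq\mathfrak{m}^{n+1}\Zp\ps{G}^r$ for all $n$, and Krull's intersection theorem finishes. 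Your identification of $\ker\pi_n$ with the image of $\ker\pi$ in the coinvariants is the right use of right-exactness, and the cyclotomic factorization argument is sound. The trade-off: your argument is more elementary and self-contained (no appeal to the rank formula over $\Zp\ps{G_n}$), but it genuinely uses the freeness hypothesis at every finite level $n$, whereas the paper's route only needs the rank of $M_{H_n}$ for $n\gg0$ once freeness of $M_H$ has supplied the generators.
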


In preparation for the proof of Proposition~\ref{free module2}, we prove the following two lemmas.

\begin{lemma} \label{stabilize submodule}
Let $M$ be a finitely generated $\Zp\ps{G}$-module. There exists an integer $n_0$ such that $M^{H_n} = M^{H_{n_0}}$ for all $n\geq n_0$.
\end{lemma}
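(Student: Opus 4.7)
The plan is to observe that $(M^{H_n})_{n\ge 0}$ is an ascending chain of $\Zp\ps{G}$-submodules of $M$, and then apply the Noetherian property of $\Zp\ps{G}$.

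First I would check that the chain is well-defined and ascending: since $H_{n+1}=H^{p^{n+1}}\subseteq H^{p^n}=H_n$, we automatically have $M^{H_n}\subseteq M^{H_{n+1}}$. Next I would verify that each $M^{H_n}$ is not merely a $\Zp\ps{\Ga}$-submodule but in fact a $\Zp\ps{G}$-submodule of $M$. This uses that $G\cong H\times\Ga$ is abelian: for any $g\in G$, $h\in H_n$ and $m\in M^{H_n}$, one has $h\cdot(g\cdot m)=g\cdot(h\cdot m)=g\cdot m$, so $g\cdot m\in M^{H_n}$.

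Then I would invoke Noetherianity. Since $G$ is a compact $p$-adic Lie group without $p$-torsion (here $G\cong\Zp^2$), the Iwasawa algebra $\Zp\ps{G}$ is Noetherian, so the finitely generated $\Zp\ps{G}$-module $M$ is Noetherian. Hence the ascending chain
\[ M^{H_0}\subseteq M^{H_1}\subseteq M^{H_2}\subseteq\cdots \]
must stabilize, yielding the required $n_0$.

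There is no real obstacle here; the only subtle point is recognizing that $M^{H_n}$ is stable under the full algebra $\Zp\ps{G}$ (rather than just $\Zp\ps{\Ga}$), so that Noetherianity of $M$ as a $\Zp\ps{G}$-module can be applied directly without having to bound a priori the number of $\Zp\ps{\Ga}$-generators of $M^{H_n}$ as $n$ varies.
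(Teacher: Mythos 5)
Your proof is correct and is essentially the same as the paper's: both note that commutativity of $G$ makes each $M^{H_n}$ a $\Zp\ps{G}$-submodule, so the ascending chain stabilizes by Noetherianity of the finitely generated module $M$. Your write-up simply spells out the details more explicitly.
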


\begin{proof}
 Since $G$ is commutative, each $M^{H_n}$ is also a $\Zp\ps{G}$-submodule of $M$. Furthermore, they form an ascending chain and hence must stabilize by the Noetherian property of $M$.
\end{proof}

\begin{lemma} \label{rank formula}
Let $M$ be a finitely generated $\Zp\ps{G}$-module. Then we have
\[ \rank_{\Zp\ps{\Ga}}\big(M_{H_n}\big) = p^n\rank_{\Zp\ps{G}}(M) + c\]
for $n\gg 0$, where $c$ is some constant independent of $n$.
\end{lemma}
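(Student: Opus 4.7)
The plan is to evaluate the alternating sum of $\Zp\ps{\Ga}$-ranks of $H_i(H_n,M)$ in two different ways via a finite free resolution, and then isolate $\rank_{\Zp\ps{\Ga}}(M_{H_n})$ using the fact that $H_n\cong\Zp$ has cohomological dimension one.

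Since $G\cong\Zp^2$, the Iwasawa algebra $\Zp\ps{G}\cong \Zp\ps{T_1,T_2}$ is a regular local ring of Krull dimension three, so $M$ admits a finite free resolution
\[ 0\to F_k\to\cdots\to F_1\to F_0\to M\to 0. \]
For each free module $\Zp\ps{G}$, I would apply the short Koszul complex $0\to \Zp\ps{H_n}\xrightarrow{\gamma_n-1}\Zp\ps{H_n}\to\Zp\to 0$ (where $\gamma_n$ is a topological generator of $H_n$) to compute $H_0(H_n,\Zp\ps{G})=\Zp\ps{\Ga}[H/H_n]$, which is free of rank $p^n$ over $\Zp\ps{\Ga}$, and $H_i(H_n,\Zp\ps{G})=0$ for $i\ge1$. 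Hence each $F_i$ is acyclic for $(-)_{H_n}$, and $(F_\bullet)_{H_n}$ is a finite complex of finitely generated free $\Zp\ps{\Ga}$-modules whose homology computes $H_\bullet(H_n,M)$.

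Using additivity of $\Zp\ps{\Ga}$-rank on the homology of a bounded complex of finitely generated modules and additivity of $\Zp\ps{G}$-rank on the exact resolution, I would derive
\[ \sum_{i\ge 0}(-1)^i\rank_{\Zp\ps{\Ga}}H_i(H_n,M)=\sum_{i\ge 0}(-1)^i\rank_{\Zp\ps{\Ga}}(F_i)_{H_n}=p^n\sum_{i\ge 0}(-1)^i\rank_{\Zp\ps{G}}F_i=p^n\rank_{\Zp\ps{G}}(M). \]
Since $H_n\cong\Zp$ is of cohomological dimension one, only $H_0(H_n,M)=M_{H_n}$ and $H_1(H_n,M)=M^{H_n}$ contribute, giving
\[ \rank_{\Zp\ps{\Ga}}(M_{H_n})-\rank_{\Zp\ps{\Ga}}(M^{H_n})=p^n\rank_{\Zp\ps{G}}(M). \]

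To conclude, I would invoke Lemma~\ref{stabilize submodule} to deduce that $M^{H_n}=M^{H_{n_0}}$ for all $n\ge n_0$, so $\rank_{\Zp\ps{\Ga}}(M^{H_n})$ is constant for $n\gg 0$; setting $c:=\rank_{\Zp\ps{\Ga}}(M^{H_{n_0}})$ yields the desired formula. I do not anticipate any real obstacle: the main ingredient is verifying that $\Zp\ps{G}$ is acyclic for $H_n$-homology in positive degrees, which is a direct Koszul calculation, and all rank additivity claims are standard for finitely generated modules over the Noetherian integral domain $\Zp\ps{\Ga}$.
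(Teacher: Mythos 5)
Your proof is correct and follows essentially the same route as the paper: the paper deduces the identity $\rank_{\Zp\ps{\Ga}}(M_{H_n})=\rank_{\Zp\ps{G_n}}(M)+\rank_{\Zp\ps{\Ga}}(M^{H_n})$ by citing a lemma of Lim (\emph{Notes on the fine Selmer groups}, Lemma 4.5), notes $\rank_{\Zp\ps{G_n}}(M)=p^n\rank_{\Zp\ps{G}}(M)$, and then invokes Lemma~\ref{stabilize submodule} exactly as you do. The only difference is that you re-derive the cited rank identity from scratch via a finite free resolution and the Koszul complex for $H_n\cong\Zp$, which is a valid, self-contained substitute for the reference.
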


\begin{proof}
It follows from \cite[Lemma 4.5]{LimFine} that
\[ \rank_{\Zp\ps{\Ga}}\big(M_{H_n}\big)= \rank_{\Zp\ps{G_n}}(M)+ \rank_{\Zp\ps{\Ga}}\big(M^{H_n}\big).\]
By Lemma \ref{stabilize submodule}, the quantity $\rank_{\Zp\ps{\Ga}}\big(M^{H_n}\big)$ stabilizes for large enough $n$. On the other hand, we have
$\rank_{\Zp\ps{G_n}}(M) = |G:G_n|\rank_{\Zp\ps{G}}(M) =  p^n\rank_{\Zp\ps{G}}(M)$. Putting these equations together, the proposition follows.
\end{proof}

We can now prove Proposition \ref{free module2}:

\begin{proof}[Proof of Proposition \ref{free module2}]
Since $M_H$ is a free $\Zp\ps{\Ga}$-module of rank $r$,  $M$ is generated by $r$ elements over $\Zp\ps{G}$ {by Nakayama's Lemma}. In other words, we have a short exact sequence
\[ 0 \lra K\lra \Zp\ps{G}^r\lra M\lra 0 \]
of $\Zp\ps{G}$-modules. 
Furthermore, it follows from the hypothesis of the proposition and Lemma \ref{rank formula} that $\rank_{\Zp\ps{G}}(M) = r$. Thus, $K$ must be torsion over $\Zp\ps{G}$. But $\Zp\ps{G}^r$ has no nontrivial torsion submodule, it follows that  $K=0$, and consequently, $M\cong\Zp\ps{G}^r$.
\end{proof}

\section{Elliptic curves over local fields} \label{elliptic curve over local}

In this section, we record certain results on elliptic curves over a $p$-adic local field. We consider the ordinary and supersingular cases separately.

\subsection{The ordinary case} \label{ordinary subsec}
Let $K$ be a finite extension of $\Qp$ and $E$ an elliptic curve defined over $K$. In this subsection, our elliptic curve $E$ is always assumed to have either good ordinary reduction or multiplicative reduction. Then from \cite[P. 150]{CG}, we have the following short exact sequence of $\Gal(\bar{K}/K)$-modules
\[ 0\lra C \lra \Ep\lra D\lra 0,\]
where $C$ and $D$ are cofree $\Zp$-modules of corank one. Furthermore, $C$ and $D$ are characterized by the fact that $C$ is divisible and that $D$ is the maximal quotient of $\Ep$ by a divisible subgroup on which  $\Gal(\bar{K}/K^{ur})$ acts  via a finite quotient. Here, $K^{ur}$ is the maximal unramified extension of $K$. In fact, as a $\Gal(\bar{K}/K)$-module, $D$ can be explicitly described as follows (see \cite{CG}):
\begin{equation}\label{eq:D}
    D = \begin{cases}  \widetilde E,& \mbox{if $E$ has good ordinary reduction}, \\
      \Qp/\Zp, & \mbox{if $E$ has split multiplicative reduction,} \\
      \Qp/\Zp\otimes\phi, & \mbox{if $E$ has nonsplit multiplicative reduction,}\end{cases}
\end{equation}
where $\widetilde E$ is the reduction of $E$ and $\phi$ is a nontrivial unramified character of $\Gal(\bar{K}/K)$.

\begin{lemma} \label{ordinary lemma}
Let $E$ be an elliptic curve defined over a finite extension $K$ of $\Qp$. Let  $K_{\infty}$ be a $\Zp^r$-extension of $K$ which contains the cyclotomic $\Zp$-extension $K^{\cyc}$. Write $H=\Gal(K_{\infty}/K^{\cyc})$. Then the following statements hold.
\begin{enumerate}
\item[$(a)$] We have
\[\frac{H^1(\mathcal{K}, \Ep)}{E(\mathcal{K})\otimes\Qp/\Zp}\cong H^1(\mathcal{K}, D)\]
for $\mathcal{K}= K^{\cyc}$ or $ K_{\infty}$.
\item[$(b)$] $H^0(K^{\cyc}, D)$ is finite if $E$ has either good ordinary or non-split multiplicative reduction. If $E$ has split multiplicative reduction, then $H^0(K^{\cyc}, D)\cong \Qp/\Zp$.
\item[$(c)$] If $r\geq 2$, then $\Ak_H(D(K_{\infty})^\vee)=1$. 
\end{enumerate}
\end{lemma}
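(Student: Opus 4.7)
The overall plan is to exploit the fundamental short exact sequence
\[ 0 \lra C \lra \Ep \lra D \lra 0 \]
via its long cohomology sequence over $\mathcal{K}$, together with the explicit description \eqref{eq:D} of $D$. I expect the only real subtlety to lie in part (a), where one must match the Kummer-image of $E(\mathcal{K})\otimes\Qp/\Zp$ with the image coming from $C$; parts (b) and (c) should then follow quickly.

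For part (a), I would first observe that $\mathcal{K}$ is a deeply ramified extension of the $p$-adic field $K$ (since $\mathcal{K}\supseteq K^{\cyc}$ and $K^{\cyc}/K$ is totally ramified). Under this hypothesis, the Coates--Greenberg theory for abelian varieties of ordinary or multiplicative reduction identifies the image of the Kummer map $E(\mathcal{K})\otimes\Qp/\Zp \hookrightarrow H^1(\mathcal{K},\Ep)$ with the image of the natural map $H^1(\mathcal{K},C)\to H^1(\mathcal{K},\Ep)$. Combining this with the long exact sequence yields an injection
\[ \frac{H^1(\mathcal{K},\Ep)}{E(\mathcal{K})\otimes\Qp/\Zp} \hookrightarrow H^1(\mathcal{K},D). \]
To upgrade this to an isomorphism, I would invoke $\mathrm{cd}_p(\mathcal{K})\leq 1$ (which holds for $K^{\cyc}$ and hence for any $\Zp^r$-extension containing it), so that $H^2(\mathcal{K},C)=0$ and the right-hand map in the long exact sequence is surjective.

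For part (b), I would just plug in \eqref{eq:D} case by case. Because $K^{\cyc}/K$ is totally ramified, the residue field of $K^{\cyc}$ is the finite field $f$. In the good ordinary case, $H^0(K^{\cyc},D)\subseteq\widetilde E[p^\infty](f)$, which is finite. In the nonsplit multiplicative case, $\phi$ is a nontrivial unramified character, and since $K^{\cyc}/K$ is totally ramified, $\phi|_{G_{K^{\cyc}}}$ is still nontrivial of order dividing $2$, forcing $H^0(K^{\cyc},D)\subseteq(\Qp/\Zp)[2]=0$ because $p$ is odd. In the split multiplicative case, $D=\Qp/\Zp$ with trivial Galois action, so $H^0(K^{\cyc},D)=\Qp/\Zp$.

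For part (c), the key observation is simply that $D$ is cofinitely generated over $\Zp$ (of corank one) in all three cases, hence so is the submodule $D(K_\infty)\subseteq D$. Therefore $D(K_\infty)^\vee$ is finitely generated as a $\Zp$-module. Since $r\geq 2$ gives $\dim H\geq 1$ and we may decompose $G=\Gal(K_\infty/K)\cong H\times\Ga$ (a splitting exists because $G$ is a free $\Zp$-module), Lemma \ref{Akashi fg Zp} immediately yields $\Ak_H(D(K_\infty)^\vee)=1$.
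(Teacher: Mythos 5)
Your overall route coincides with the paper's, which simply cites \cite[Proposition 4.3]{CG} for (a), reads (b) off the explicit description \eqref{eq:D}, and deduces (c) from Lemma \ref{Akashi fg Zp} via the splitting $\Gal(K_\infty/K)\cong H\times\Gal(K^\cyc/K)$; your unpacking of the Coates--Greenberg identification together with the $\mathrm{cd}_p\le 1$ surjectivity step (cf.\ \cite[Theorem 7.1.8]{NSW}) is a correct expansion of that citation, and your part (c) is verbatim the paper's argument.

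One inaccuracy to repair: for a general finite extension $K/\Qp$ the cyclotomic $\Zp$-extension $K^{\cyc}/K$ need \emph{not} be totally ramified (e.g.\ if $K$ is a degree-$p$ totally ramified extension of $\Qp$ inside the compositum of the first cyclotomic layer and the unramified degree-$p$ extension, distinct from both, then the first layer of $K^{\cyc}/K$ is unramified). So you may not assert that the residue field of $K^{\cyc}$ equals that of $K$, nor deduce the nontriviality of $\phi|_{G_{K^{\cyc}}}$ from total ramifiedness. What \emph{is} true, and suffices at every point where you invoke total ramifiedness, is that the inertia subgroup of $\Gal(K^{\cyc}/K)\cong\Zp$ is open, so the maximal unramified subextension of $K^{\cyc}/K$ is finite of $p$-power degree. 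Consequently: $K^{\cyc}$ (hence $K_\infty$) is still deeply ramified, since its ramification is infinite; the residue field of $K^{\cyc}$ is still a finite field, so $H^0(K^{\cyc},\widetilde E[p^\infty])$ is finite in the good ordinary case (the unit Frobenius eigenvalue is not a root of unity); and the order-two unramified character $\phi$ remains nontrivial on $G_{K^{\cyc}}$ because the index of $G_{K^{\cyc}}\cap G_{K^{nr}}$ in $G_K\cap G_{K^{nr}}$ is a power of the odd prime $p$. With this substitution your proof is complete and agrees with the paper's.
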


\begin{proof}
 Assertion (a) follows from a well-known result of Coates-Greenberg \cite[Proposition 4.3]{CG}. Assertion (b) follows from the explicit description of $D$ given in \eqref{eq:D} above. Finally, assertion (c) is a consequence of Lemma \ref{Akashi fg Zp} since $\Gal(K_\infty/K)\cong H\times \Gal(K^\cyc/K)$.
\end{proof}

\subsection{The supersingular case} \label{supersingular subsection}
Let $E$ be an elliptic curve defined over $\QQ_p$ with good supersingular reduction and $a_p=1 + p - |\tilde{E}(\mathbb{F}_p)| = 0$, where $\tilde{E}$ is the reduction of $E$. Let $K$ be a finite unramified extension of $\QQ_p$. Denote by $\widehat{E}$ the formal group of $E$. For convenience, if $L$ is an extension of $\Qp$, we write $\widehat{E}(L)$ for $\widehat{E}(\mathfrak{m}_L)$, where $\mathfrak{m}_L$ is the maximal ideal of the ring of integers of $L$. Denote by $K^{\cyc}$ (resp., $K^{nr}$) the cyclotomic (resp, the unramified) $\Zp$-extension  of $K$. If $n\ge0$ is an integer, we write  $K_n$  (resp. $K^{(n)}$) for  the unique subextension of $K^{\cyc}/K$ (resp. $K^{nr}/K$) whose degree over $K$ is equal to $p^n$.

\begin{lemma} \label{supersingular points}
 The formal groups $\widehat{E}(K^{(m)}K_n)$ has no $p$-torsion for all integers $m, n\ge0$. In particular, $E(K^{(m)}K_n)$ has no $p$-torsion for every $m, n$.
 \end{lemma}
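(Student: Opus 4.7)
The plan is to analyze the $p$-power torsion of the formal group $\widehat{E}$ via its multiplication-by-$p$ series, and then descend to $E$ using the reduction map. I first reduce the formal group statement to showing $\widehat{E}[p](K^{(m)}K_n)=0$: if $y \in \widehat{E}(K^{(m)}K_n)$ were a nonzero $p$-power torsion point, a suitable $\Zp$-multiple of $y$ would produce a nonzero element of $\widehat{E}[p](K^{(m)}K_n)$.

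The key input is a Newton polygon computation on the $[p]$-series. Since $E/\Qp$ has good supersingular reduction with $a_p=0$, the formal group $\widehat{E}$ has height 2, so $[p](X) \in \Zp\ps{X}$ has the shape
\[ [p](X) = pX + (\text{middle coefficients divisible by } p) + uX^{p^2} + \cdots \]
with $u \in \Zp^\times$. The Newton polygon of $[p](X)/X$ then consists of a single segment of slope $-1/(p^2-1)$, so every nonzero $p$-torsion point $x$ satisfies $v_p(x)=1/(p^2-1)$; consequently $\Qp(x)/\Qp$ is totally ramified of degree divisible by $p^2-1$. On the other hand, $K^{(m)}K_n/\Qp$ has ramification index exactly $p^n$, since $K/\Qp$ and $K^{(m)}/K$ are unramified while $K_n/K$ is totally ramified of degree $p^n$ (being contained in the cyclotomic tower). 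Because $\gcd(p^2-1,p^n)=1$, no totally ramified extension of $\Qp$ of degree divisible by $p^2-1$ can sit inside $K^{(m)}K_n$, forcing $\widehat{E}[p](K^{(m)}K_n)=0$.

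Finally, for the statement about $E(K^{(m)}K_n)$ itself, any $p$-power torsion point of $E$ reduces to a $p$-power torsion point of $\widetilde{E}/\bar{\mathbb{F}}_p$; the supersingular hypothesis forces this reduction to be zero, placing the point in the kernel of reduction $\widehat{E}(K^{(m)}K_n)$, where it is handled by the first part. The only substantive step is the Newton polygon assertion: the height-2 (supersingular with $a_p=0$) hypothesis is exactly what pins down the shape of $[p](X)$ and yields the single slope $-1/(p^2-1)$; once that is in hand, the rest is a direct ramification comparison, so there is no real obstacle beyond correctly setting up the polygon.
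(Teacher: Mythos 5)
Your argument is correct, and for the second assertion it coincides with the paper's: reduce modulo the maximal ideal, use good supersingular reduction to see that $\widetilde{E}$ over the (finite) residue field has no $p$-torsion, and conclude that any $p$-power torsion point lies in the kernel of reduction $\widehat{E}(K^{(m)}K_n)$. The difference is in the first assertion: the paper simply cites Kitajima--Otsuki (Proposition 3.1) and Kobayashi (Proposition 8.7), whereas you prove it from scratch via the Newton polygon of the $[p]$-series. Your computation is the standard proof of those cited results: height $2$ forces all coefficients of $X^j$ with $j<p^2$ to be divisible by $p$ and the coefficient of $X^{p^2}$ to be a unit, so the only negative-slope segment of the Newton polygon of $[p](X)/X$ runs from $(0,1)$ to $(p^2-1,0)$, every nonzero $p$-torsion point has valuation $1/(p^2-1)$, and hence the ramification index of $\Qp(x)/\Qp$ is divisible by $p^2-1$, which is coprime to $e(K^{(m)}K_n/\Qp)=p^n$. (One small imprecision: $\Qp(x)/\Qp$ need not be \emph{totally} ramified; what you actually get, and all you need, is that its ramification index is divisible by $p^2-1$.) Note also that your proof only uses supersingularity (height $2$), not $a_p=0$; the latter hypothesis plays no role in this lemma. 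The trade-off is simply self-containedness versus brevity: your version makes the lemma independent of the references at the cost of reproducing their Newton polygon analysis.
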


\begin{proof}
The first assertion is \cite[Proposition 3.1]{KO} or \cite[Proposition 8.7]{Kob}. For the second assertion, consider the following short exact sequence
\[ 0\lra \widehat{E}(K^{(m)}K_n)\lra E(K^{(m)}K_n) \lra \widetilde{E}(k_{m,n})\lra 0,\]
where $k_{m,n}$ is the residue field of $K^{(m)}K_n$. Since $\widetilde{E}(k_{m,n})$ has no $p$-torsion by our assumption that $E$ has good supersingular reduction, the second assertion follows from the first assertion.
\end{proof}

Following \cite{Kim07,KimPM,Kim14,KO,Kob, LeiSuj, NS}, we define
the following plus and minus norm groups.

\begin{defn} \label{plus minus defn}{We define $\widehat{E}^+(K^{(m)}K_n)$ and $\widehat{E}^-(K^{(m)}K_n)$ to be
\[  \left\{ P\in \widehat{E}(K^{(m)}K_n)~:~\mathrm{tr}_{n/l+1}(P)\in E(K^{(m)}K_{l}), 2\mid l, 0\leq l \leq n-1\right\} \]
and
\[ \left\{ P\in \widehat{E}(K^{(m)}K_n)~:~\mathrm{tr}_{n/l+1}(P)\in E(K^{(m)}K_{l}), 2\nmid l, 0\leq l \leq n-1\right\} \]
respectively,}
where $\mathrm{tr}_{n/l+1}: \widehat{E}(K^{(m)}K_n) \lra \widehat{E}(K^{(m)}K_{l+1})$ denotes the trace map with respect to the formal group law of $\widehat{E}$. 
\end{defn}
By \cite[Lemma 8.17]{Kob}, the groups $\widehat{E}^\pm(K^{(m)}K^\cyc)\otimes\Qp/\Zp$ inject into $H^1(K^{(m)}K^\cyc,\Ep)$ via the Kummer map.

In the rest of this subsection, we write $K_{\infty} = \cup_{m,n\ge0}K^{(m)}K_n$. Note that $\Gal(K_{\infty}/K)\cong \Zp^2$. Denote by $\Ga$ the Galois group $\Gal(K^{\cyc}/K)$ which is also identified with $\Gal(K^{\cyc}K^{(m)}/K^{(m)})$ for $m\ge0$. We shall also write $H=\Gal(K_{\infty}/K^{\cyc})$ which is identified with $\Gal(K^{nr}/K)$. For $m\ge0$, set $H_m = \Gal(K_{\infty}/K^{\cyc}K^{(m)})$, which is identified with $\Gal(K^{nr}/K^{(m)})$.

\begin{lemma} \label{H invariant of Knr}
We have $\Big(\widehat{E}(K^{nr})\otimes \Qp/\Zp\Big)^{H_m} =\widehat{E}(K^{(m)})\otimes \Qp/\Zp$ for $m\geq 0$.
Furthermore,
$\widehat{E}(K^{nr})\otimes \Qp/\Zp$ is a cofree $\Zp\ps{H}$-module of corank $|K:\Qp|$. In particular,
\[H^1\left(H_m, \widehat{E}(K^{nr})\otimes \Qp/\Zp\right) =0\]
for $m\geq 0$.\end{lemma}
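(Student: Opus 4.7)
The plan has three steps. The key idea is to use the formal logarithm to reduce everything to additive Galois cohomology of unramified extensions, where Noether's normal integral basis theorem applies. The first assertion is the main work; the cofreeness and the final $H^1$-vanishing will then follow formally from Proposition~\ref{free module}.

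Step one is to invoke the formal logarithm. Since $p$ is odd and $K^{nr}/\Qp$ is absolutely unramified, the formal exponential of $\widehat{E}$ converges on $\mathfrak{m}_{K^{nr}}$ (the convergence condition $v>1/(p-1)$ is automatic once $v\geq 1$), so $\log_{\widehat{E}}$ gives an $H$-equivariant isomorphism of topological $\Zp$-modules
\[\widehat{E}(K^{nr}) \xrightarrow{\sim} \mathfrak{m}_{K^{nr}} = p\mathcal{O}_{K^{nr}}.\]
Under this identification, the short exact sequence $0 \to \widehat{E}(K^{nr}) \to \widehat{E}(K^{nr})\otimes\Qp \to \widehat{E}(K^{nr})\otimes\Qp/\Zp \to 0$ (exact since $\widehat{E}(K^{nr})$ is torsion-free by Lemma~\ref{supersingular points}) becomes $0 \to p\mathcal{O}_{K^{nr}} \to K^{nr} \to K^{nr}/p\mathcal{O}_{K^{nr}} \to 0$.

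Step two establishes the first assertion. For $n\geq m$ I would invoke Noether's normal integral basis theorem for the tame extension $K^{(n)}/K^{(m)}$: $\mathcal{O}_{K^{(n)}}$ is free of rank one over $\mathcal{O}_{K^{(m)}}[H_m/H_n]$. Consequently, both $p\mathcal{O}_{K^{(n)}}$ and $K^{(n)}$ are cohomologically trivial for $H_m/H_n$. Applying $H_m/H_n$-cohomology to $0 \to p\mathcal{O}_{K^{(n)}} \to K^{(n)} \to K^{(n)}/p\mathcal{O}_{K^{(n)}} \to 0$ then identifies $(K^{(n)}/p\mathcal{O}_{K^{(n)}})^{H_m/H_n} = K^{(m)}/p\mathcal{O}_{K^{(m)}}$. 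Translating back through the log isomorphism and passing to the direct limit in $n$ (which commutes with $H_m$-invariants on discrete modules) yields $(\widehat{E}(K^{nr})\otimes\Qp/\Zp)^{H_m} = \widehat{E}(K^{(m)})\otimes\Qp/\Zp$.

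Step three deduces the remaining claims via Proposition~\ref{free module}. Set $M := (\widehat{E}(K^{nr})\otimes\Qp/\Zp)^\vee$. The first assertion dualizes to $M_{H_m} \cong \Hom_\Zp(\widehat{E}(K^{(m)}),\Zp)$, which is $\Zp$-free of rank $p^m[K:\Qp]$, since $\widehat{E}(K^{(m)}) \cong p\mathcal{O}_{K^{(m)}}$ is $\Zp$-free of that rank. As $M_H$ is finitely generated over $\Zp$, topological Nakayama shows $M$ is finitely generated over $\Zp\ps{H}$, and Proposition~\ref{free module} then forces $M$ to be free of rank $[K:\Qp]$ over $\Zp\ps{H}$; by Pontryagin duality this is the asserted cofreeness. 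Finally, because $\Zp\ps{H}$ is free of finite rank over $\Zp\ps{H_m}$, the module $\widehat{E}(K^{nr})\otimes\Qp/\Zp$ is also cofree over $\Zp\ps{H_m}$ and hence acyclic in positive degrees, proving $H^1(H_m,\widehat{E}(K^{nr})\otimes\Qp/\Zp) = 0$. I expect the main technical hurdle to be the integral log isomorphism of Step one, for which the hypothesis that $p$ is odd is essential.
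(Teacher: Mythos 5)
Your proposal is correct, and it diverges from the paper in an interesting way on the first assertion. The paper simply quotes the identity $\bigl(\widehat{E}(K^{nr})\otimes \Qp/\Zp\bigr)^{H_m} =\widehat{E}(K^{(m)})\otimes \Qp/\Zp$ from Kim \cite[Proposition 2.10]{Kim14}, whereas you reprove it from scratch: the integral logarithm isomorphism $\widehat{E}(\mathfrak{m}_L)\cong p\mathcal{O}_L$ (valid here because $p$ is odd and the tower is unramified) reduces the statement to the cohomological triviality of $\mathcal{O}_{K^{(n)}}$ for the unramified, hence tame, extensions $K^{(n)}/K^{(m)}$, which Noether's normal integral basis theorem supplies; the limit argument is then routine. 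This makes the lemma self-contained at the cost of leaning on the unramifiedness and $p\neq 2$ in an essential way, exactly where hypothesis (S2)(a) lives. From that point on your Step three coincides with the paper's argument: both compute the $\Zp$-corank of the $H_m$-invariants to be $|K:\Qp|p^m$ (the paper via Mattuck's theorem, you via the explicit identification $\widehat{E}(K^{(m)})\cong p\mathcal{O}_{K^{(m)}}$, which give the same count), feed this into Proposition~\ref{free module} to get $\Zp\ps{H}$-cofreeness, and deduce the vanishing of $H^1(H_m,-)$ from cofreeness; your explicit appeal to Nakayama for finite generation of the dual is a detail the paper leaves implicit. No gaps.
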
 

\begin{proof}
The first assertion is \cite[Proposition 2.10]{Kim14}. {Since $\widehat{E}(K^{(m)})\otimes \Qp/\Zp$ is cofree over $\Zp$ for each $m$,} $\Big(\widehat{E}(K^{nr})\otimes \Qp/\Zp\Big)^{H_m}$, is a cofree $\Zp$-module with $\Zp$-corank $|K :\Qp|p^m$ by Mattuck's theorem \cite{Mat}. Proposition \ref{free module} then implies that $\widehat{E}(K^{nr})\otimes \Qp/\Zp$ is a cofree $\Zp\ps{H}$-module of corank $|K:\Qp|$.
\end{proof}

We require an analog of the above lemma over $K_\infty$. As a start, we record the following.

\begin{lemma} \label{H homology of formal}
We have $H^1(H_m, \widehat{E}(K_{\infty})) =0$.
 \end{lemma}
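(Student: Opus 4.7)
The plan is to deduce the vanishing from the long exact sequence in $H_m$-cohomology associated with
\[ 0 \lra \widehat{E}(K_\infty) \lra \widehat{E}(K_\infty) \otimes_{\Zp} \Qp \lra \widehat{E}(K_\infty) \otimes_{\Zp} \Qp/\Zp \lra 0, \]
which is exact because $\widehat{E}(K_\infty)$ is $p$-torsion free by Lemma~\ref{supersingular points}. From this sequence, $H^1(H_m, \widehat{E}(K_\infty))$ will vanish once I check that (a) $H^1(H_m, \widehat{E}(K_\infty) \otimes \Qp) = 0$ and (b) the natural map $(\widehat{E}(K_\infty) \otimes \Qp)^{H_m} \to (\widehat{E}(K_\infty) \otimes \Qp/\Zp)^{H_m}$ is surjective.

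For (a), I would observe that $\widehat{E}(K_\infty) \otimes \Qp$ is a discrete $H_m$-module, since every element lies in some $\widehat{E}(K^{(l)} K_n) \otimes \Qp$ and is thus fixed by $H_l$, and it is a $\Qp$-vector space. Using the standard identification $H^i(H_m, \,\cdot\,) = \varinjlim_{l \geq m} H^i(H_m/H_l, (\,\cdot\,)^{H_l})$ for discrete modules, each term on the right vanishes in positive degree because the finite $p$-group $H_m/H_l$ acts on a uniquely $p$-divisible module. For (b), the source equals $\widehat{E}(K^{(m)} K^{\cyc}) \otimes \Qp$, as taking $H_m$-invariants commutes with the flat localization $\Zp \hookrightarrow \Qp$ and $\widehat{E}(K_\infty)^{H_m} = \widehat{E}(K^{(m)} K^{\cyc})$ by Galois descent. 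The target can be identified with $\widehat{E}(K^{(m)} K^{\cyc}) \otimes \Qp/\Zp$ by applying an analogue of Lemma~\ref{H invariant of Knr} with $K_n$ in place of $K$—giving $\bigl(\widehat{E}(K^{nr} K_n) \otimes \Qp/\Zp\bigr)^{H_m} = \widehat{E}(K^{(m)} K_n) \otimes \Qp/\Zp$ along with $\Zp\ps{H}$-cofreeness—and then passing to the direct limit over $n$. Under these identifications, the map in (b) becomes the natural surjection $\widehat{E}(K^{(m)} K^{\cyc}) \otimes \Qp \twoheadrightarrow \widehat{E}(K^{(m)} K^{\cyc}) \otimes \Qp/\Zp$, establishing surjectivity.

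The main technical step will be the extension of Lemma~\ref{H invariant of Knr} to the (ramified) base $K_n/\Qp$. I expect this to go through verbatim: the argument only uses Mattuck's theorem—which applies to any $p$-adic local field—combined with the free-module criterion of Proposition~\ref{free module}, together with the fact (from Lemma~\ref{supersingular points}) that $\widehat{E}(K^{(m)} K_n)$ is $\Zp$-torsion free of the expected rank $|K^{(m)} K_n : \Qp|$. Verifying this extension, and that the direct limit over $n$ preserves the fixed-point computation, is the only place where care is needed.
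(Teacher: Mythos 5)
Your route is genuinely different from the paper's. The paper disposes of this lemma in two lines: by Coates--Greenberg, $H^i(\mathcal{K},\widehat{E}(\bar{K}))=0$ for $i\geq 1$ whenever $\mathcal{K}$ is deeply ramified, in particular for $\mathcal{K}=K^{(m)}K^{\cyc}$ and $\mathcal{K}=K_{\infty}$, so the Hochschild--Serre spectral sequence gives an injection $H^1(H_m,\widehat{E}(K_{\infty}))\hookrightarrow H^1(K^{(m)}K^{\cyc},\widehat{E}(\bar{K}))=0$. You instead try to reduce everything to descent in the unramified direction via the divisibility sequence; step (a) and your computation of the source in (b) are correct.

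The gap is in your identification of the target in (b), i.e.\ in the ``analogue of Lemma~\ref{H invariant of Knr} over $K_n$''. The identity $\bigl(\widehat{E}(K^{nr}K_n)\otimes\Qp/\Zp\bigr)^{H_m}=\widehat{E}(K^{(m)}K_n)\otimes\Qp/\Zp$ is, by the very divisibility sequence you are using (applied over the base $K^{nr}K_n$), \emph{equivalent} to $H^1(H_m,\widehat{E}(K^{nr}K_n))=0$, that is, to the finite-level-in-$n$ version of the lemma being proved. Mattuck's theorem and Proposition~\ref{free module} cannot supply it: in the proof of Lemma~\ref{H invariant of Knr} those ingredients yield only the second assertion ($\Zp\ps{H}$-cofreeness) \emph{after} the invariants have been computed, and the invariants computation itself is precisely what is quoted from Kim's Proposition~2.10. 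As written, your justification of the central step is therefore circular. What actually makes the step true is the cohomological triviality of formal groups in unramified towers over an arbitrary base: for $M/L$ finite unramified, the filtration of $\widehat{E}(\mathfrak{m}_M)$ with graded pieces isomorphic to the residue field of $M$, together with the normal basis theorem, gives $H^1(\Gal(M/L),\widehat{E}(\mathfrak{m}_M))=0$; taking limits yields $H^1(H_m,\widehat{E}(K^{nr}K_n))=0$ for every $n$, and then
\[ H^1(H_m,\widehat{E}(K_{\infty}))=\varinjlim_n H^1(H_m,\widehat{E}(K^{nr}K_n))=0 \]
directly, so the $\otimes\,\Qp$ and $\otimes\,\Qp/\Zp$ scaffolding becomes unnecessary. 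Either supply this unramified-tower vanishing explicitly, or use the Coates--Greenberg argument.
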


\begin{proof}
Replacing $K$ by $K^{(m)}$, it suffices to prove the case for $K$ (or $H$).
By a well-known result of Coates-Greenberg \cite[Corollary 3.2]{CG}, we have $H^i(K^{\cyc}, \widehat{E}(\bar{K})) =0 = H^i(K_{\infty}, \widehat{E}(\bar{K}))$ for $i\geq 1$. Hence the spectral sequence
\[ H^i(H, H^j(K_{\infty}, \widehat{E}(\bar{K}))  \Longrightarrow H^{i+j}(K^{\cyc}, \widehat{E}(\bar{K}))\] degenerates yielding the required isomorphism.
\end{proof}

We can now establish the following analog of Lemma \ref{H invariant of Knr}.

\begin{proposition} \label{H invariant of Kinfty}
 We have \[ H^i\left(H_m, \widehat{E}(K_{\infty})\otimes \Qp/\Zp\right) =\begin{cases} \widehat{E}(K^{\cyc}K^{(m)})\otimes\Qp/\Zp,  & \mbox{if $i=0$}, \\
0, & \mbox{if $i\geq 1$}.\end{cases}\]
\end{proposition}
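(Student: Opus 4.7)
The plan is to reduce the computation to that of $H^i(H_m, \widehat{E}(K_\infty)/p^n)$ by exploiting the $p$-torsion freeness of $\widehat{E}(K_\infty)$ established in Lemma \ref{supersingular points}, then combine Lemma \ref{H homology of formal} with the fact that $H_m \cong \Zp$ has $p$-cohomological dimension one, and finally pass to the direct limit.

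More precisely, I would first observe that $\widehat{E}(K_\infty) = \bigcup_{m',n'} \widehat{E}(K^{(m')}K_{n'})$ is a discrete $H_m$-module with no $p$-torsion (Lemma \ref{supersingular points}, noting that a $p^k$-torsion point would produce a $p$-torsion point). Multiplication by $p^n$ on this torsion-free module yields the short exact sequence of discrete $H_m$-modules
\[ 0 \lra \widehat{E}(K_\infty) \xrightarrow{p^n} \widehat{E}(K_\infty) \lra \widehat{E}(K_\infty)/p^n \widehat{E}(K_\infty) \lra 0. \]
Taking the associated long exact sequence of $H_m$-cohomology and using Lemma \ref{H homology of formal} to kill $H^1(H_m, \widehat{E}(K_\infty))$, together with the vanishing of $H^{\geq 2}(H_m, -)$ on discrete modules (since $H_m \cong \Zp$ has $p$-cohomological dimension one), I obtain
\[ \bigl(\widehat{E}(K_\infty)/p^n\bigr)^{H_m} \cong \widehat{E}(K^\cyc K^{(m)})/p^n, \qquad H^{i}(H_m, \widehat{E}(K_\infty)/p^n) = 0 \text{ for } i \geq 1, \]
using $K_\infty^{H_m} = K^\cyc K^{(m)}$ to identify $\widehat{E}(K_\infty)^{H_m}$ with $\widehat{E}(K^\cyc K^{(m)})$.

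Finally, since $\widehat{E}(K_\infty) \otimes \Qp/\Zp = \varinjlim_n \widehat{E}(K_\infty)/p^n$ (both sides being the union of $p^n$-cotorsion quotients under the $p$-multiplication transition maps), and continuous cohomology of a profinite group commutes with filtered colimits of discrete modules, passing to the colimit in $n$ yields the stated description of $H^i(H_m, \widehat{E}(K_\infty) \otimes \Qp/\Zp)$. The main potential obstacle is confirming the $H^{\geq 2}$-vanishing and the colimit interchange, but these are standard consequences of $\widehat{E}(K_\infty)$ being a discrete module over the free pro-$p$ group $H_m$ of rank one; modulo these formalities, the argument is essentially bookkeeping, with the substantive input concentrated in the prior Lemmas \ref{supersingular points} and \ref{H homology of formal}.
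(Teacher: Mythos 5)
Your strategy (reduce mod $p^n$, then pass to the colimit in $n$) is genuinely different from the paper's, which instead takes $H_m$-cohomology of the sequence $0 \lra \widehat{E}(K_{\infty}) \lra \widehat{E}(K_{\infty})\otimes\Qp \lra \widehat{E}(K_{\infty})\otimes\Qp/\Zp \lra 0$ and kills $H^1(H_m,\widehat{E}(K_{\infty})\otimes\Qp/\Zp)$ by noting that it is isomorphic to $H^1(H_m,\widehat{E}(K_{\infty})\otimes\Qp)$, a group that is simultaneously a $\Qp$-vector space and $p$-power torsion, hence zero. That device deliberately avoids ever considering degree-two cohomology of the non-torsion module $\widehat{E}(K_{\infty})$. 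Your route does not avoid it, and that is exactly where your argument has a gap.

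From your long exact sequence, after invoking $H^1(H_m,\widehat{E}(K_{\infty}))=0$, what you actually obtain is an isomorphism $H^1(H_m,\widehat{E}(K_{\infty})/p^n)\cong H^2(H_m,\widehat{E}(K_{\infty}))[p^n]$, and to kill this you appeal to ``$H^{\geq 2}(H_m,-)=0$ on discrete modules since $\mathrm{cd}_p(H_m)=1$''. That statement is false for non-torsion discrete modules: $\mathrm{cd}_p(\Zp)=1$ gives the vanishing only for $p$-torsion discrete modules, whereas $\mathrm{scd}_p(\Zp)=2$; for instance $H^2(\Zp,\ZZ)\cong H^1(\Zp,\QQ/\ZZ)\cong\Qp/\Zp$, which has nontrivial $p^n$-torsion for every $n$. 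Since $\widehat{E}(K_{\infty})$ is $\Zp$-torsion-free rather than torsion, the cohomological-dimension argument does not apply to it. The gap is repairable with material already in the paper: the Hochschild--Serre/Coates--Greenberg computation proving Lemma \ref{H homology of formal} in fact shows $H^i(H_m,\widehat{E}(K_{\infty}))=H^i(K^{\cyc}K^{(m)},\widehat{E}(\bar{K}))=0$ for \emph{all} $i\geq 1$, not only $i=1$, and citing that stronger form closes the hole. The remaining steps --- the identification $\widehat{E}(K_{\infty})^{H_m}=\widehat{E}(K^{\cyc}K^{(m)})$, the $H^{\geq 2}$-vanishing for the genuinely torsion modules $\widehat{E}(K_{\infty})/p^n$, and the commutation of cohomology with the filtered colimit --- are sound.
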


\begin{proof}
 As before, it suffices to prove the proposition for $H$.
 Since $H\cong\Zp$, the vanishing is clear for $i\geq 2$.
   By Lemma \ref{supersingular points}, we have the following short exact sequence
   \[ 0\lra \widehat{E}(K_{\infty})\lra \widehat{E}(K_{\infty})\otimes\Qp \lra \widehat{E}(K_{\infty}) \otimes \Qp/\Zp\lra 0.\]
   In view of Lemma \ref{H homology of formal}, upon taking $H$-invariant, we have
\[ 0\lra \widehat{E}(K^{\cyc})\lra \widehat{E}(K^{\cyc})\otimes\Qp \lra \Big(\widehat{E}(K_{\infty}) \otimes \Qp/\Zp\Big)^H\lra 0\]
and
\[ H^1\Big(H,\widehat{E}(K_{\infty})\otimes\Qp \Big)\cong H^1\Big(H,\widehat{E}(K_{\infty})\otimes\Qp/\Zp \Big)\]
 The isomorphism for $i=0$ follows from the short exact sequence.  Also, note that in the isomorphism above, the left-hand side is a $\Qp$-vector space, while the right-hand side is $p$-power torsion. Hence we must have
 $H^1\Big(H,\widehat{E}(K_{\infty})\otimes\Qp/\Zp \Big) = 0$.
\end{proof}

\begin{corollary} \label{H invariant of plus-minus}
 We have \[ H^i\left(H_m, \widehat{E}^\pm(K_{\infty})\otimes \Qp/\Zp\right) =\begin{cases} \widehat{E}^\pm(K^{\cyc}K^{(m)})\otimes\Qp/\Zp,  & \mbox{if $i=0$}, \\
0, & \mbox{if $i\geq 1$}.\end{cases}\]
\end{corollary}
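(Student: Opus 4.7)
The plan is to adapt the argument of Proposition~\ref{H invariant of Kinfty}, with $\widehat{E}$ replaced by $\widehat{E}^\pm$. Since $K^{(m)}$ is itself a finite unramified extension of $\Qp$, we may replace $K$ by $K^{(m)}$ and reduce to the case $m = 0$; the higher-$i$ vanishing is automatic since $H \cong \Zp$ has $p$-cohomological dimension $1$.

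The first step is to verify that $Q^\pm := \widehat{E}(K_\infty)/\widehat{E}^\pm(K_\infty)$ is $\Zp$-torsion-free, so that tensoring with $\Qp/\Zp$ gives the short exact sequence
\[
0 \to \widehat{E}^\pm(K_\infty) \otimes \Qp/\Zp \to \widehat{E}(K_\infty) \otimes \Qp/\Zp \to Q^\pm \otimes \Qp/\Zp \to 0.
\]
For this, suppose $P \in \widehat{E}(K^{(m')}K_{n'})$ with $pP \in \widehat{E}^\pm$, so that $p\cdot \mathrm{tr}_{n'/l+1}(P) \in E(K^{(m')}K_l)$ for each relevant $l$. For $\sigma \in \Gal(K^{(m')}K_{l+1}/K^{(m')}K_l)$, one has $p\bigl(\sigma\cdot\mathrm{tr}_{n'/l+1}(P) - \mathrm{tr}_{n'/l+1}(P)\bigr) = 0$; since Lemma~\ref{supersingular points} ensures $E(K^{(m')}K_{l+1})$ has no $p$-torsion, we conclude $\mathrm{tr}_{n'/l+1}(P) \in E(K^{(m')}K_l)$, and hence $P \in \widehat{E}^\pm$.

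Taking $H$-cohomology of the sequence above and applying Proposition~\ref{H invariant of Kinfty} yields the long exact sequence
\[
0 \to (\widehat{E}^\pm \otimes \Qp/\Zp)^H \to \widehat{E}(K^\cyc) \otimes \Qp/\Zp \to (Q^\pm \otimes \Qp/\Zp)^H \to H^1(H, \widehat{E}^\pm \otimes \Qp/\Zp) \to 0.
\]
The same torsion-free argument applied to $Q^\pm(K^\cyc) := \widehat{E}(K^\cyc)/\widehat{E}^\pm(K^\cyc)$ yields the surjection $\widehat{E}(K^\cyc) \otimes \Qp/\Zp \twoheadrightarrow Q^\pm(K^\cyc) \otimes \Qp/\Zp$ with kernel $\widehat{E}^\pm(K^\cyc) \otimes \Qp/\Zp$. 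Both halves of the corollary therefore reduce to establishing the identification $(Q^\pm \otimes \Qp/\Zp)^H \cong Q^\pm(K^\cyc) \otimes \Qp/\Zp$.

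This last identification is the main obstacle. Applying $H$-cohomology to $0 \to \widehat{E}^\pm \to \widehat{E} \to Q^\pm \to 0$, the vanishing $H^1(H, \widehat{E}) = 0$ from Lemma~\ref{H homology of formal} together with $H^2 = 0$ (cohomological dimension) forces $H^1(H, Q^\pm) = 0$, so that $(Q^\pm \otimes \Qp/\Zp)^H = (Q^\pm)^H \otimes \Qp/\Zp$. It remains to show $(Q^\pm)^H \otimes \Qp/\Zp = Q^\pm(K^\cyc) \otimes \Qp/\Zp$, or equivalently that $H^1(H, \widehat{E}^\pm(K_\infty)) \otimes \Qp/\Zp = 0$. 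This is the technical heart of the argument, and I expect it to be proved by combining Lemma~\ref{H invariant of Knr} (which governs the unramified direction) with the explicit Kobayashi--Kitajima--Otsuki structure theorems for the plus/minus norm subgroups over cyclotomic extensions.
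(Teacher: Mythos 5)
Your argument stops short of a proof: the step you yourself flag as ``the technical heart'' --- the identification $(Q^\pm\otimes\Qp/\Zp)^H\cong Q^\pm(K^\cyc)\otimes\Qp/\Zp$, equivalently the vanishing of $H^1(H,\widehat{E}^\pm(K_\infty))\otimes\Qp/\Zp$ --- is exactly where all the content of the corollary sits, and you leave it as an expectation rather than an argument. The missing ingredient is Kim's exact sequence (\cite[Proposition 2.6]{Kim14})
\[ 0 \lra \widehat{E}(K^{nr}) \lra \widehat{E}^+(K_{\infty})\oplus \widehat{E}^-(K_{\infty}) \lra \widehat{E}(K_{\infty}) \lra 0,\]
together with its analogue over $K^\cyc$. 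This is how the paper proceeds: tensoring these sequences with $\Qp/\Zp$ (legitimate by Lemma~\ref{supersingular points}), one compares the resulting $H$-cohomology sequences for $K^\cyc$ and $K_\infty$ in a two-row commutative diagram; the outer vertical maps are isomorphisms by Lemma~\ref{H invariant of Knr} and Proposition~\ref{H invariant of Kinfty}, so the middle one is too (case $i=0$), and the continuation of the bottom row is pinched between $H^1(H,\widehat{E}(K^{nr})\otimes\Qp/\Zp)=0$ and $H^1(H,\widehat{E}(K_{\infty})\otimes\Qp/\Zp)=0$ (case $i=1$). Treating both signs simultaneously sidesteps any analysis of the single-sign quotient $Q^\pm$, which is precisely the object your route cannot control without some such structural input on the plus/minus norm groups.

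A secondary problem: you deduce $H^1(H,Q^\pm)=0$ from $H^1(H,\widehat{E}(K_\infty))=0$ together with ``$H^2=0$ (cohomological dimension)''. But $\widehat{E}^\pm(K_\infty)$ is a torsion-free, hence non-torsion, discrete module, and $\mathrm{cd}_p(\Zp)=1$ only controls cohomology with $p$-primary torsion coefficients; the strict cohomological dimension of $\Zp$ is $2$, so $H^2(H,\widehat{E}^\pm(K_\infty))=0$ is not automatic. Your torsion-freeness argument for $Q^\pm$ (via Lemma~\ref{supersingular points} and Galois descent for the formal group) is fine, as is the reduction to $m=0$, but the proof as written does not close.
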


\begin{proof}
Again, it suffices to prove the proposition for $H$. Since $H\cong\Zp$, the vanishing is clear for $i\geq 2$. By \cite[Proposition 2.6]{Kim14}, we have
\[ 0 \lra  \widehat{E}(K) \lra \widehat{E}^+(K^{\cyc})\oplus \widehat{E}^-(K^{\cyc}) \lra
\widehat{E}(K^{\cyc}) \lra 0\] and
\[ 0 \lra  \widehat{E}(K^{nr}) \lra \widehat{E}^+(K_{\infty})\oplus \widehat{E}^-(K_{\infty}) \lra
\widehat{E}(K_{\infty}) \lra 0.\]
{For simplicity, we shall write $A=\Qp/\Zp$, $\widehat{E}_L=\widehat{E}(L)$ and $\widehat{E}^\pm_L=\widehat{E}^\pm(L)$ for $L\in\{K,K^\cyc,K^{nr},K_\infty\}$. In view of Lemma \ref{supersingular points}, the exact sequences above induce the following short exact sequences}
{
\[0 \lra  \widehat{E}_K\otimes A \lra \big(\widehat{E}^+_{K^{\cyc}}\otimes A\big)\oplus \big( \widehat{E}^-_{K^{\cyc}}\otimes A \big)\lra
\widehat{E}_{K^{\cyc}}\otimes A \lra 0,\]
\[0 \lra  \widehat{E}_{K^{nr}}\otimes A \lra \big(\widehat{E}^+_{K_{\infty}}\otimes A\big)\oplus \big( \widehat{E}^-_{K_{\infty}}\otimes A \big)\lra
\widehat{E}_{K_{\infty}}\otimes A \lra 0 ,\]
which in turn fit into the following diagram
\[   \entrymodifiers={!! <0pt, .8ex>+} \SelectTips{eu}{}\xymatrixcolsep{0.75pc}\xymatrix{
    0 \ar[r]^{} & \widehat{E}_K\otimes A \ar[d] \ar[r] &  \big(\widehat{E}^+_{K^{\cyc}}\otimes A\big)\oplus \big( \widehat{E}^-_{K^{\cyc}}\otimes A \big)
    \ar[d] \ar[r] & \widehat{E}_{K^{\cyc}}\otimes A\ar[d]^{}  \ar[r] & 0\\
    0 \ar[r]^{} & \big(\widehat{E}_{K^{nr}}\otimes A\big)^H \ar[r]^{} & \big(\widehat{E}^+_{K_{\infty}}\otimes A\big)^H\oplus \big( \widehat{E}^-_{K_{\infty}}\otimes A \big)^H \ar[r] & \big(\widehat{E}_{K_{\infty}}\otimes A\big)^H.
    &  } \]}

Since the leftmost and rightmost vertical maps are isomorphisms by Lemma  \ref{H invariant of Knr} and Proposition \ref{H invariant of Kinfty}, so is the middle map. This implies the isomorphism of the corollary for $i=0$. Finally, the bottom sequence of the diagram continues in the form
{\begin{align*}
     H^1(H, \widehat{E}(K^{nr})\otimes A) \lra H^1(H,\widehat{E}^+(K_{\infty})\otimes A)\oplus H^1(H,\widehat{E}^-(K_{\infty})\otimes A) \\
    \lra H^1(H,\widehat{E}(K_{\infty})\otimes A).
\end{align*}}
Again, taking Lemma \ref{H invariant of Knr} and Proposition \ref{H invariant of Kinfty} into account,
we obtain the desired vanishing for $i=1$.
\end{proof}

We end this subsection with a discussion on the structure of  {the $\Zp[[G]]$-module} $\displaystyle\frac{H^1(K_{\infty}, \Ep)}{\widehat{E}^\pm(K_{\infty})\otimes\Qp/\Zp}$.

\begin{proposition} \label{H-homology of supersingular}
We have
\[ H^i\left(H_m, \frac{H^1(K_{\infty}, \Ep)}{\widehat{E}^\pm(K_{\infty})\otimes\Qp/\Zp}\right) =\begin{cases} \displaystyle\frac{H^1(K^{\cyc}K^{(m)}, \Ep)}{\widehat{E}^\pm(K^{\cyc}K^{(m)})\otimes\Qp/\Zp},  & \mbox{if $i=0$}, \\
0, & \mbox{if $i\geq 1$}.\end{cases}\]
\end{proposition}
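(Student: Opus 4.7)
The strategy is to apply the long exact sequence in $H_m$-cohomology to the defining short exact sequence
\[ 0 \lra \widehat{E}^\pm(K_{\infty})\otimes\Qp/\Zp \lra H^1(K_{\infty}, \Ep) \lra Q \lra 0, \]
where $Q$ denotes the quotient in question. By Corollary \ref{H invariant of plus-minus}, we already have complete control over the $H_m$-cohomology of the leftmost term, so what remains is to pin down $H^i(H_m, H^1(K_{\infty}, \Ep))$ for $i=0,1$.

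For this I would use the Hochschild--Serre spectral sequence
\[ H^i(H_m, H^j(K_{\infty}, \Ep)) \Longrightarrow H^{i+j}(K^{\cyc}K^{(m)}, \Ep). \]
Three ingredients make it collapse: (i) $H_m \cong \Zp$, so $H_m$ has $p$-cohomological dimension $1$; (ii) by Lemma \ref{supersingular points}, $E(L)[p^\infty] = 0$ for every finite subextension $L$ of $K_{\infty}/K$, hence in particular $H^0(K_{\infty}, \Ep) = 0$; and (iii) applying local Tate duality at each such $L$ (with Tate dual $T_pE$ via the Weil pairing), the vanishing in (ii) forces $H^2(L, \Ep) = 0$, and passing to the direct limit gives $H^2(K_{\infty}, \Ep) = 0 = H^2(K^{\cyc}K^{(m)}, \Ep)$.

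With these inputs in hand, the spectral sequence yields the identifications
\[ H^1(K_{\infty}, \Ep)^{H_m} \cong H^1(K^{\cyc}K^{(m)}, \Ep), \qquad H^1(H_m, H^1(K_{\infty}, \Ep)) \cong H^2(K^{\cyc}K^{(m)}, \Ep) = 0. \]
Plugging these and Corollary \ref{H invariant of plus-minus} back into the long exact sequence for $Q$, the $i=0$ case comes out as the asserted quotient, the $i=1$ piece sits between two vanishing cohomology groups, and $i\geq 2$ vanishes automatically since $\mathrm{cd}_p(H_m) = 1$.

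The main obstacle is the $H^2$-vanishing at finite level, which is what bridges the purely supersingular torsion input (Lemma \ref{supersingular points}) with the cohomological picture via local Tate duality; once this is established, the rest is essentially bookkeeping with long exact sequences.
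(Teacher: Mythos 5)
Your proposal is correct and follows essentially the same route as the paper: the same Hochschild--Serre spectral sequence for $K_\infty/K^{\cyc}K^{(m)}$, degenerating via $H^0(K_\infty,\Ep)=0$ (Lemma \ref{supersingular points}) and the vanishing of $H^2$, combined with Corollary \ref{H invariant of plus-minus} in the long exact sequence of the defining short exact sequence. The only (harmless) divergence is in justifying $H^r(K_\infty,\Ep)=0=H^r(K^{\cyc}K^{(m)},\Ep)$ for $r\ge 2$: the paper cites \cite[Theorem 7.1.8(i)]{NSW} on the cohomological dimension of infinite local extensions, while you deduce it at finite level from local Tate duality (where $H^2(L,\Ep)$ is dual to $T_pE(L)=0$) and pass to the limit — both are standard and valid.
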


\begin{proof}
 Consider the spectral sequence
  \[ H^i\big(H_m, H^j(K_{\infty}, \Ep)\big)\Rightarrow H^{i+j}(K^{\cyc}K^{(m)},\Ep). \]
  By \cite[Theorem 7.1.8(i)]{NSW}, $H^r(K_{\infty}, \Ep) = 0 =H^r(K^{\cyc}K^{(m)}, \Ep)$ for $r\geq 2$. Also, we have $H^0(K_{\infty}, \Ep) =0$ by Lemma \ref{supersingular points}. Hence the spectral sequence degenerates to yield
  \[ H^i\left(H_m, H^1(K_{\infty}, \Ep)\right) =\begin{cases} H^1(K^{\cyc}K^{(m)}, \Ep),  & \mbox{if $i=0$}, \\
0, & \mbox{if $i\geq 1$}.\end{cases}\]
The conclusion of the corollary now follows from combining the above observations with an analysis of the $H_m$-cohomology exact sequence of
\[ 0\lra \widehat{E}^\pm(K_{\infty})\otimes\Qp/\Zp \lra H^1(K_{\infty}, \Ep)\lra \frac{H^1(K_{\infty}, \Ep)}{\widehat{E}^\pm(K_{\infty})\otimes\Qp/\Zp}\lra 0\]
and taking Corollary \ref{H invariant of plus-minus} into account.
\end{proof}

\begin{corollary} \label{H-homology of supersingular2}
The module $\displaystyle\frac{H^1(K_{\infty}, \Ep)}{\widehat{E}^\pm(K_{\infty})\otimes\Qp/\Zp}$ is $\Zp\ps{G}$-cofree of corank $|K:\Qp|$.
\end{corollary}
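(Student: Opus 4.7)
My plan is to reduce the corollary to Proposition~\ref{free module2} applied to the Pontryagin dual
\[ M := \left(\frac{H^1(K_{\infty}, \Ep)}{\widehat{E}^\pm(K_{\infty})\otimes\Qp/\Zp}\right)^\vee \]
with $r = |K:\Qp|$. The first step is to translate Proposition~\ref{H-homology of supersingular} through Pontryagin duality: since $H_m \cong \Zp$ has $p$-cohomological dimension one, the dualized Hochschild--Serre spectral sequence degenerates, and combining the identification of the $H_m$-invariants with the vanishing of $H^1(H_m,-)$ gives a $\Zp\ps{\Ga}$-module isomorphism
\[ M_{H_m} \cong \left(\frac{H^1(K^\cyc K^{(m)}, \Ep)}{\widehat{E}^\pm(K^\cyc K^{(m)})\otimes\Qp/\Zp}\right)^\vee \]
for every $m \geq 0$.

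The second step is to invoke the one-variable cofreeness statement over the cyclotomic $\Zp$-extension of a finite unramified extension of $\Qp$: by Kim \cite{Kim07, Kim14} in the minus case and Kitajima--Otsuki \cite{KO} in the plus case, applied not to $K$ but to the finite unramified extension $K^{(m)}$ of $\Qp$ of degree $|K:\Qp|\cdot p^m$, one obtains that $\frac{H^1(K^\cyc K^{(m)}, \Ep)}{\widehat{E}^\pm(K^\cyc K^{(m)})\otimes\Qp/\Zp}$ is $\Zp\ps{\Ga}$-cofree of corank $|K:\Qp|\cdot p^m$. Dualizing, $M_{H_m}$ is $\Zp\ps{\Ga}$-free of rank $|K:\Qp|\cdot p^m$, so the hypothesis of Proposition~\ref{free module2} is satisfied with $r = |K:\Qp|$. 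The proposition then concludes that $M$ is $\Zp\ps{G}$-free of rank $|K:\Qp|$, which is precisely the desired cofreeness.

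The delicate point is the second step. Kim's original argument for the plus norm group required the condition $4\nmid |K:\Qp|$, which under the replacement $K \leadsto K^{(m)}$ fails for $m$ sufficiently large; hence the unconditional treatment of Kitajima--Otsuki is essential here. One should also verify that the $\Ga$-action transported across the duality isomorphism in the first step matches the natural action of $\Gal(K^\cyc K^{(m)}/K^{(m)})$ identified with $\Ga$, but this is routine functoriality of Hochschild--Serre and of Pontryagin duality.
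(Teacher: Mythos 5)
Your proposal is correct and follows essentially the same route as the paper: identify the $H_m$-invariants via Proposition~\ref{H-homology of supersingular}, invoke the Kitajima--Otsuki cofreeness of the one-variable quotient over $K^{\cyc}K^{(m)}$ (of corank $|K:\Qp|p^m$), and conclude with Proposition~\ref{free module2}. Your observation that Kim's condition $4\nmid|K:\Qp|$ fails for $K^{(m)}$ with $m$ large, so that the unconditional Kitajima--Otsuki result is essential, is exactly the point the paper relies on.
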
 

\begin{proof}
It follows from the preceding proposition that 
\[ \left(\displaystyle\frac{H^1(K_{\infty}, \Ep)}{\widehat{E}^\pm(K_{\infty})\otimes\Qp/\Zp}\right)^{H_m}\cong \displaystyle\frac{H^1(K^{\cyc}K^{(m)}, \Ep)}{\widehat{E}^\pm(K^{\cyc}K^{(m)})\otimes\Qp/\Zp},\]
where the latter is $\Zp\ps{\Gamma}$-cofree of corank $|K:\Qp|p^m$ by a calculation of Kitajima-Otsuki \cite[Proposition 3.32)]{KO}. The required conclusion now follows from Proposition \ref{free module2}.
\end{proof}

\section{Multiply signed Selmer groups} \label{Selmer}

Throughout this section, we fix $E$ to be an elliptic curve defined over a number field $F'$ and $F$ a finite extension of $F'$. The following assumptions will be in force.

\begin{itemize}
\item[(S1)]  There exists at least one prime $u$ of $F'$ lying above $p$ at which $E$ has good supersingular reduction.

 \item[(S2)] For each $u$ of $F'$ above $p$ at which $E$ has good supersingular reduction, we have

 (a) $F'_u\cong\Qp$ and $u$ is unramified in $F/F'$;

 (b) $a_u = 1 + p - |\tilde{E}_u(\mathbb{F}_p)| = 0$, where $\tilde{E}_u$ is the reduction of $E$ at $u$.

 \item[(S3)] $E$ does not have additive reduction at all primes of $F$ lying above $p$.
\end{itemize}

Let $F^{\cyc}$ be the cyclotomic $\Zp$-extension of $F$ and $F_n$ the intermediate subfield of $F^{\cyc}/F$ with $|F_n:F|=p^n$ for $n\ge0$.

\subsection{Selmer groups over cyclotomic $\Zp$-extensions}
From now on, $\Sigma$ is a fixed finite set of primes of $F$ which contains all the primes above $p$, all the ramified primes of $F/F'$, the bad reduction primes of $E$ and the archimedean primes. Let $F_\Sigma$ denote the maximal algebraic extension of $F$ unramified outside $\Sigma$. For any extension $\mathcal{F}$ of $F$ which is contained in $F_{\Sigma}$, we write $G_{\Sigma}(\mathcal{F})=\Gal(F_{\Sigma}/\mathcal{F})$. 
\begin{defn}
We define
\begin{align*}
\Sigma_p&=\{\text{primes of $F$ above $p$}\},   \\
\Sigma'&=\Sigma\setminus\Sigma_p,\\
\Sss&=\{u\in \Sigma_p:\text{$E$ has good supersingular reduction at $u$}\},\\
\So&=\Sigma_p\setminus\Sss.
\end{align*}

For any subset $S$ of $\Sigma$ and any extension $\mathcal{F}$ of $F$, we  write $S(\mathcal{F})$ for the set of primes of $\mathcal{F}$ above $S$.
\end{defn}

By (S2), every prime in $\Sss$ is totally ramified in $F^{\cyc}/F$. In particular, for each such prime $v$, there is a unique prime of $F_n$ lying above the said prime. We then write $\widehat{E}_v$ for the formal group $E$ over $F_v$.

\begin{defn}
Let $\fs=(s_v)_{v\in \Sss}\in\{+,-\}^{\Sss}$. The signed Selmer group $\Sel^{\fs}(E/F_n)$ is defined {to be the kernel of 
\begin{align*}
&H^1(G_\Sigma(F_n),\Ep)\lra \\
&\bigoplus_{v\in \Sss(F_n)}\frac{H^1(F_{n,v},\Ep)}{\widehat{E}^{s_v}(F_{n,v})\otimes\Qp/\Zp}\times\bigoplus_{v\in \Sigma(F_n)\setminus\Sss(F_n)}\frac{H^1(F_{n,v},\Ep)}{E(F_{n,v})\otimes\Qp/\Zp} .
\end{align*}
}
We set $\Sel^{\fs}(E/F^{\cyc}) = \ilim_n\Sel^{\fs}(E/F_n)$ and write $X^{\fs}(E/F^{\cyc})$ for its Pontryagin dual.
\end{defn}
 For our purposes, we {work with a different} description of $\Sel^{\fs}(E/F^{\cyc})$. Note that for primes outside $p$, we have $E(F_{n,v})\otimes\Qp/\Zp =0$. For primes in $\So(F^{\cyc})$, we have
\[\frac{H^1(F^{\cyc}_{v},\Ep)}{E(F^{\cyc}_v)\otimes\Qp/\Zp}\cong H^1(F^{\cyc}_v,D_v)\]
by Lemma \ref{ordinary lemma}(a), where $D_v$ is defined as in \S \ref{ordinary subsec}.

\begin{defn}\label{defn:J}
Let $\fs=(s_v)_{v\in \Sss}\in\{+,-\}^{\Sss}$. Given $v\in \Sigma(F^{\cyc})$, we define
\[
J_v^{\fs}(E/F^{\cyc}):=\begin{cases}
\frac{H^1(F^{\cyc}_v,\Ep)}{\widehat{E}^{s_v}(F^{\cyc}_v)\otimes\Qp/\Zp}&v\in\Sss(F^{\cyc}),\\
H^1(F^{\cyc}_v,D_v)&v\in \So(F^{\cyc}),\\
H^1(F^{\cyc}_v,\Ep)&v\in \Sigma'(F^{\cyc}).
\end{cases}
\]
\end{defn}

\begin{remark}\label{rk:rewriteSel}
The Selmer group $\Sel^{\fs}(E/F^{\cyc})$ sits inside the following exact sequence:
\[
 0\lra \Sel^{\fs}(E/F^{\cyc}) \lra H^1(G_{\Sigma}(F^{\cyc}),\Ep)\lra \bigoplus_{v\in \Sigma(F^{\cyc})}J_v^{\fs}(E/F^{\cyc}).\]
\end{remark}

\begin{conjecture}\label{conj:tor}
For all choices of $\fs$, the $\Zp\ps{\Gamma}$-module $X^{\fs}(E/F^{\cyc})$ is torsion.
\end{conjecture}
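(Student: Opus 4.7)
The plan is to compute the $\Zp\ps{\Gamma}$-corank of $X^{\fs}(E/F^{\cyc})$ from the defining exact sequence in Remark~\ref{rk:rewriteSel}. First, I would invoke the weak Leopoldt conjecture for $\Ep$ over $F^{\cyc}$ (which is well known for the cyclotomic $\Zp$-extension) to ensure that $H^2(G_\Sigma(F^\cyc),\Ep)$ is $\Zp\ps{\Gamma}$-torsion; combined with the global Euler--Poincar\'e characteristic formula this yields $\mathrm{corank}_{\Zp\ps{\Gamma}}\,H^1(G_\Sigma(F^\cyc),\Ep)=[F:\QQ]$.

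Next, I would compute the local coranks of the $J_v^{\fs}(E/F^{\cyc})$ defined in Definition~\ref{defn:J}. At $v \in \Sigma'(F^\cyc)$ the term $H^1(F^\cyc_v,\Ep)$ is cofinitely generated over $\Zp$, so its $\Zp\ps{\Gamma}$-corank is zero. At $v \in \So(F^\cyc)$, Lemma~\ref{ordinary lemma}(a) identifies $J_v^{\fs}$ with $H^1(F^\cyc_v, D_v)$, which by local Euler--Poincar\'e and Mattuck's theorem has $\Zp\ps{\Gamma}$-corank $[F_v:\Qp]$. At $v \in \Sss(F^\cyc)$ the corank of $J_v^{\fs}$ is also $[F_v:\Qp]$, as follows from the Kitajima--Otsuki structure theorem invoked in the proof of Corollary~\ref{H-homology of supersingular2}: $H^1(F^\cyc_v,\Ep)$ has corank $2[F_v:\Qp]$ while $\widehat{E}^{s_v}(F^\cyc_v)\otimes\Qp/\Zp$ has corank $[F_v:\Qp]$. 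Summing over primes above $p$ gives the total local corank $[F:\QQ]$, matching the global $H^1$.

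To conclude cotorsionness I would apply Poitou--Tate duality to the localization map, fitting its kernel and cokernel into an exact sequence terminating in a ``strict'' (fine Selmer type) dual. The matching of coranks established above ensures that $X^{\fs}(E/F^\cyc)$ is $\Zp\ps{\Gamma}$-torsion as soon as this dual is torsion, i.e., as soon as the global-to-local map has image of maximal possible $\Zp\ps{\Gamma}$-corank.

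The main obstacle is precisely showing this surjectivity up to torsion: one must produce a nontorsion global cohomology class whose local components generate the $J_v^{\fs}$ modulo torsion. For $F=\QQ$ this is supplied by Kato's zeta element; for $F$ an abelian extension of $\QQ$ such statements were extended by Kim, Kitajima--Otsuki and others. Beyond that, the statement is genuinely conjectural, and a proof in full generality would require either further Euler system input, the validity of an appropriate Iwasawa main conjecture relating $X^{\fs}$ to a $p$-adic $L$-function, or a descent argument from a larger extension where the cotorsionness is already known.
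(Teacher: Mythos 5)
This statement is labelled a \emph{conjecture} in the paper, and the paper offers no proof of it: the authors only remark that it is known in special cases (Mazur's conjecture in the ordinary case, Kato's theorem for $E/\QQ$ with $F/\QQ$ abelian, Kobayashi's theorem in the supersingular case over $\QQ$) and then take it as a hypothesis in their main results. You have correctly recognised this, and your sketch is an accurate account of how the known cases are proved and of where the genuine difficulty lies. Your corank bookkeeping is consistent with what the paper does in the two-variable analogue (Proposition~\ref{torsion2 H2}): the global $H^1$ has $\Zp\ps{\Gamma}$-corank $[F:\QQ]$ granted weak Leopoldt, the local terms at $v\mid p$ contribute $[F_v:\Qp]$ each (via Lemma~\ref{ordinary lemma} in the ordinary/multiplicative case and the Kitajima--Otsuki cofreeness result in the supersingular case), and Poitou--Tate reduces cotorsionness to the near-surjectivity of the localisation map. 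You are also right that this last step is exactly the conjectural content: it requires a nontorsion Euler-system class (Kato's zeta element when available) or the relevant main conjecture, and no purely formal argument can supply it. One caveat: you invoke weak Leopoldt for $\Ep$ over $F^\cyc$ as ``well known,'' but for a general number field $F$ this is itself not a theorem; in the paper's logic (Proposition~\ref{torsion H2}) the vanishing of $H^2(G_\Sigma(F^\cyc),\Ep)$ is \emph{deduced from} the cotorsionness of the Selmer group rather than used as an input, so if you wanted to run your argument you would need to either restrict to cases where weak Leopoldt is known or treat it as an additional hypothesis.
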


When $E$ has good ordinary reduction at all primes above $p$, the above conjecture is precisely Mazur's conjecture in \cite{Maz}, which is known to hold in the case when $E$ is defined over $\QQ$ and $F$ an abelian extension of $\QQ$ (see \cite{K}). For an elliptic curve over $\QQ$ with good supersingular reduction at $p$, this conjecture has been proved to be true by Kobayashi
in \cite{Kob};  see also \cite{BL} for a generalization of this conjecture for abelian varieties and \cite{BL2} for progress towards this conjecture for CM abelian varieties. We record certain consequences of Conjecture~\ref{conj:tor}, which will be utilized in subsequent discussion of the article.

\begin{proposition} \label{torsion H2}
Suppose that $(S1)-(S3)$ are valid, and that $X^{\fs}(E/F^\cyc)$ is a torsion $\Zp\ps{\Gamma}$-module. Then  the following assertions hold.
\begin{enumerate}
\item[$(a)$] $H^2(G_{\Sigma}(F^{\cyc}),\Ep)=0$.
\item[$(b)$] There is a short exact sequence
{\begin{align*}
    0\lra \Sel^{\fs}(E/F^{\cyc}) \lra& H^1(G_{\Sigma}(F^{\cyc}),\Ep)\\
   & \lra \bigoplus_{v\in \Sigma(F^{\cyc})}J_v^\fs(E/F^{\cyc})\lra 0.
\end{align*}}
 \end{enumerate}
\end{proposition}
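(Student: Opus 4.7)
Set $\Lambda = \Zp\ps{\Ga}$. The strategy is to deduce Part~(a) from a $\Lambda$-corank comparison combined with the weak Leopoldt conjecture, then to derive Part~(b) via Poitou--Tate duality once $H^2$ has been shown to vanish.

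For Part~(a), I would first apply the global Euler--Poincar\'e characteristic formula to $E[p^k]$ at each finite layer $F_n$ and then extract the leading $p^n$-coefficient. Together with the finiteness of $H^0(G_\Sigma(F^\cyc),\Ep) = E(F^\cyc)[p^\infty]$ (Imai), this yields the identity
\[
\mathrm{corank}_\Lambda H^1(G_\Sigma(F^\cyc),\Ep) - \mathrm{corank}_\Lambda H^2(G_\Sigma(F^\cyc),\Ep) = [F:\QQ].
\]
On the other hand, the cotorsionness of $\Sel^\fs(E/F^\cyc)$ combined with Remark~\ref{rk:rewriteSel} yields $\mathrm{corank}_\Lambda H^1(G_\Sigma(F^\cyc),\Ep) \leq \sum_v \mathrm{corank}_\Lambda J_v^\fs(E/F^\cyc)$. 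I would then compute these local coranks case by case: for $v\in\Sigma'(F^\cyc)$ (so $v\nmid p$) the corank is $0$; for $v\in\So(F^\cyc)$, local Euler--Poincar\'e applied to the corank-$1$ module $D_v$ gives $[F_v:\QQ_p]$; and for $v\in\Sss(F^\cyc)$, the $\Lambda$-cofreeness of $\widehat{E}^\pm(F^\cyc_v)\otimes\Qp/\Zp$ of corank $[F_v:\QQ_p]$ (Kobayashi and Kitajima--Otsuki; the cyclotomic analog of Corollary~\ref{H invariant of plus-minus}) combined with local Euler--Poincar\'e for $\Ep$ yields $[F_v:\QQ_p]$ as well. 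The total equals $\sum_{v\mid p}[F_v:\QQ_p] = [F:\QQ]$, forcing $\mathrm{corank}_\Lambda H^2(G_\Sigma(F^\cyc),\Ep) \leq 0$, i.e., $H^2$ is $\Lambda$-cotorsion.

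The upgrade from $\Lambda$-cotorsion to $H^2 = 0$ amounts to the weak Leopoldt conjecture for $\Ep$ over $F^\cyc$. I would invoke the standard fact that, in the presence of a cotorsion Selmer group, this follows from Jannsen's spectral sequence relating the Iwasawa cohomology of $T_pE$ to the Galois cohomology of $\Ep$, together with $\mathrm{cd}_p\, G_\Sigma(F^\cyc) \leq 2$ (which holds because $\mu_{p^\infty} \subset F^\cyc$). For Part~(b), with $H^2(G_\Sigma(F^\cyc),\Ep) = 0$ in hand, Poitou--Tate duality at each finite layer $F_n$ identifies the cokernel of $H^1(G_\Sigma(F_n),\Ep) \to \bigoplus_v J_v^\fs(E/F_n)$ with a quotient of $H^2$ via a dual compact Selmer group of $T_pE$. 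The self-duality of the Kobayashi plus/minus local conditions under local Tate duality (and analogously the Greenberg conditions at ordinary primes) ensures this cokernel vanishes upon passing to the direct limit in $n$.

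The principal obstacle is the passage from $\Lambda$-cotorsion of $H^2$ to its outright vanishing; $p$-divisibility together with cotorsion alone does not suffice, and Jannsen-style duality is essential. Once this is in place, Part~(b) is a formal consequence of the Poitou--Tate formalism together with the self-duality of the local conditions at primes above $p$.
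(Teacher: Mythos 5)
Your outline follows essentially the route the paper relies on: the paper itself only cites \cite[Proposition 2.7]{LL} and \cite[Proposition 4.4]{LeiS} here, but its own proof of the $\Zp^d$-analogue (Proposition \ref{torsion2 H2}) is precisely your argument --- the Poitou--Tate sequence, the global and local $\Lambda$-corank counts (with Kitajima--Otsuki supplying the corank $[F_v:\Qp]$ of the signed local quotient at supersingular primes), and Jannsen's spectral sequence. The one place where your justification does not carry the weight you put on it is the surjectivity in part (b). With $H^2(G_{\Sigma}(F^{\cyc}),\Ep)=0$, Poitou--Tate identifies the cokernel of the localization map not with ``a quotient of $H^2$'' but with the Pontryagin dual of the compact Selmer group $\mathfrak{S}^{\fs}\subseteq \HIw(F^{\cyc}/F,T_pE)$ cut out by the orthogonal complements of your local conditions; self-duality of the plus/minus and Greenberg conditions only serves to identify those complements --- it does not by itself make the cokernel vanish, and ``passing to the direct limit'' adds nothing. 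To finish, you must prove $\mathfrak{S}^{\fs}=0$, which follows from two facts already implicit in your write-up: the corank count shows $\mathfrak{S}^{\fs}$ is $\Lambda$-torsion, and Jannsen's spectral sequence together with the finiteness of $E(F^{\cyc})[p^{\infty}]$ (so that $\Ext^1_{\Lambda}$ of the corresponding finite module against $\Lambda$ vanishes) shows $\HIw(F^{\cyc}/F,T_pE)$ is $\Lambda$-torsion-free, whence its torsion submodule $\mathfrak{S}^{\fs}$ is zero. With that step made explicit, your proof is complete and coincides with the paper's method.
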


\begin{proof}
 See \cite[Proposition 2.7]{LL} or \cite[Proposition 4.4]{LeiS}.
\end{proof}

\subsection{Selmer groups over $\Zp^d$-extensions}
Throughout this subsection, let $F_{\infty}$ be a $\Zp^d$-extension of $F$ which  satisfies the following hypothesis.
\begin{itemize}
\item[(S4)] $F^{\cyc}\subseteq F_{\infty}$, and every $v\in\Sss(F^{\cyc})$ is unramified in $F_{\infty}/F^{\cyc}$.
\end{itemize}

Write $G=\Gal(F_{\infty}/F)$, $H=\Gal(F_{\infty}/F^{\cyc})$ and $\Ga=\Gal(F^{\cyc}/F)$. Let $L_n$ be the unique subextension of $F_{\infty}/F$ such that $\Gal(L_n/F)\cong (\ZZ/p^n)^d$. Let $\fs=(s_v)_{v\in \Sss}\in\{+,-\}^{\Sss}$. For $w\in\Sss(L_n)$, we set $s_w = s_v$, where $v$ is the prime of $F$ below $w$. By (S4), $L_{n,w}$ is the compositum of a subextension of the cyclotomic $\Zp$-extension of $F_v$ and a subextension of the unramified $\Zp$-extension of $F_v$. Hence we can define $\widehat{E}^{s_w}(L_{n,w})$ as in Definition \ref{plus minus defn}.

\begin{defn}
For $\fs=(s_v)_{v\in \Sss}\in\{+,-\}^{\Sss}$, the signed Selmer group $\Sel^{\fs}(E/L_n)$ is then defined {to be the kernel of
\begin{align*}
     &H^1(G_\Sigma(L_n),\Ep)\lra\\
     &\bigoplus_{w\in \Sss(L_n)}\frac{H^1(L_{n,w},\Ep)}{\widehat{E}^{s_w}(L_{n,w})\otimes\Qp/\Zp}\times\bigoplus_{w\in \Sigma(L_n)\setminus\Sss(L_n)}\frac{H^1(L_{n,w},\Ep)}{E(L_{n,w})\otimes\Qp/\Zp} .
\end{align*}}
We set $\Sel^{\fs}(E/F_{\infty}) = \ilim_n\Sel^{\fs}(E/L_n)$ and write $X^\fs(E/F_\infty)$ for its Pontryagin dual.
\end{defn}

\begin{remark}
Analogous to Definition~\ref{defn:J} and Remark~\ref{rk:rewriteSel}, we define for $w\in \Sigma(F_\infty)$
\[
J_w^{\fs}(E/F_\infty):=\begin{cases}
\frac{H^1(F_{\infty,w},\Ep)}{\widehat{E}^{s_w}(F_{\infty,w})\otimes\Qp/\Zp}&w\in\Sss(F_{\infty}),\\
H^1(F_{\infty,w},D_w)&w\in \So(F_{\infty}),\\
H^1(F_{\infty,w},\Ep)&w\in \Sigma'(F_{\infty}).
\end{cases}
\]
.and we have the exact sequence
\[
 0\lra \Sel^{\fs}(E/F_{\infty}) \lra H^1(G_{\Sigma}(F_\infty),\Ep)\lra \bigoplus_{w\in \Sigma(F_\infty)}J_w^\fs(E/F_\infty).
 \]
\end{remark}

We may now relate the signed Selmer groups over $F_\infty$ to those over $F^\cyc$ via the following lemma.

\begin{lemma} \label{Galois descent Selmer}
 Suppose that $(S1)-(S4)$ hold. There is an injection
 \[\Sel^{\fs}(E/F^{\cyc})\lra \Sel^{\fs}(E/F_{\infty})^H\]
 with cokernel being cofinitely generated over $\Zp$.
\end{lemma}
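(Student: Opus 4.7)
The plan is to run a snake lemma argument on the commutative diagram (rows from Remark~\ref{rk:rewriteSel} and its $F_\infty$-analogue, with the bottom row obtained by taking $H$-invariants; the indices $v$ and $w$ range over $\Sigma(F^\cyc)$ and $\Sigma(F_\infty)$ respectively):
\[
\begin{tikzcd}[column sep = small]
0 \ar[r] & \Sel^\fs(E/F^\cyc) \ar[r] \ar[d, "f"] & H^1(G_\Sigma(F^\cyc), \Ep) \ar[r] \ar[d, "\alpha"] & \bigoplus_{v} J_v^\fs(E/F^\cyc) \ar[d, "\beta"] \\
0 \ar[r] & \Sel^\fs(E/F_\infty)^H \ar[r] & H^1(G_\Sigma(F_\infty), \Ep)^H \ar[r] & \Bigl(\bigoplus_{w} J_w^\fs(E/F_\infty)\Bigr)^H
\end{tikzcd}
\]

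The linchpin is to promote $\alpha$ to an isomorphism, which reduces to showing $\Ep(F_\infty) = 0$. By (S1) there is a prime $u$ of $F'$ of good supersingular reduction; choose $v \mid u$ in $F$ and $w \mid v$ in $F_\infty$. By (S2)(a) and (S4), $F_v/\Qp$ is a finite unramified extension and $F_{\infty,w}/F^\cyc_v$ is an unramified abelian extension, so $F_{\infty,w}$ sits inside the $\Zp^2$-extension of $F_v$ studied in \S\ref{supersingular subsection}. Lemma~\ref{supersingular points} then yields $E(F_{\infty,w})[p^\infty] = 0$, whence $\Ep(F_\infty) \hookrightarrow \Ep(F_{\infty,w}) = 0$. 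Consequently $H^i(H, \Ep(F_\infty)) = 0$ for all $i \geq 0$, and Hochschild--Serre forces $\alpha$ to be an isomorphism.

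Next I would analyze $\beta$ one prime at a time. Fix $v \in \Sigma(F^\cyc)$ and a prime $w_0 \mid v$ in $F_\infty$ with decomposition group $H_{w_0} \subseteq H$. Since $H$ acts transitively on the primes of $F_\infty$ above $v$, Frobenius reciprocity (Shapiro's lemma for $H^0$) gives
\[
\Bigl(\bigoplus_{w \mid v} J_w^\fs(E/F_\infty)\Bigr)^H \cong \bigl(J_{w_0}^\fs(E/F_\infty)\bigr)^{H_{w_0}},
\]
reducing the question to the local restriction $J_v^\fs(E/F^\cyc) \to \bigl(J_{w_0}^\fs\bigr)^{H_{w_0}}$. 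For $v \in \Sss(F^\cyc)$, the hypotheses force $H_{w_0}$ to be trivial or $\cong \Zp$, and Proposition~\ref{H-homology of supersingular} identifies the target with $J_v^\fs(E/F^\cyc)$, so this local map is an isomorphism. For $v \in \So(F^\cyc)$, Lemma~\ref{ordinary lemma}(a) rewrites $J_v^\fs$ as $H^1(F^\cyc_v, D_v)$, and inflation--restriction exhibits the kernel of the local map as a subquotient of $H^1(H_{w_0}, D_{w_0}(F_{\infty,w_0}))$, which is cofinitely generated over $\Zp$ since $D_{w_0}(F_{\infty,w_0})$ is cofinitely generated over $\Zp$ and $H_{w_0}$ is a compact $p$-adic Lie group. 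The case $v \in \Sigma'(F^\cyc)$ is entirely analogous, with $\Ep(F_{\infty,w_0})$ in place of $D_{w_0}(F_{\infty,w_0})$.

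With $\alpha$ an isomorphism, the snake lemma collapses to give $\ker f = 0$ and an embedding $\coker f \hookrightarrow \ker \beta$. Since the supersingular primes contribute nothing to $\ker \beta$ and the remaining primes contribute cofinitely generated $\Zp$-modules, $\coker f$ is cofinitely generated over $\Zp$. I anticipate the main obstacle to be the careful bookkeeping when a prime in $\Sigma'$ splits into infinitely many primes of $F^\cyc$, most cleanly handled by first establishing the analogous statement at each finite level $F_n$ (where the local sums are finite) and passing to the direct limit via $\Sel^\fs(E/F^\cyc) = \varinjlim_n \Sel^\fs(E/F_n)$.
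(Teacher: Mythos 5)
Your argument is correct and follows essentially the same route as the paper: the same three-column diagram, the vanishing of $\Ep(F_\infty)$ via the supersingular prime together with Hochschild--Serre to make the middle map an isomorphism, Proposition~\ref{H-homology of supersingular} to kill the supersingular local kernels, and the cofinite generation of $H^1(H_v,W)$ over $\Zp$ for the remaining primes. Your closing worry is moot, since every prime is finitely decomposed in the cyclotomic $\Zp$-extension, so $\Sigma(F^\cyc)$ is already finite.
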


\begin{proof}
Consider the following diagram
\[{   
\entrymodifiers={!! <0pt, .8ex>+} \SelectTips{eu}{}\xymatrixcolsep{0.75pc}\xymatrix{
    0 \ar[r]^{} &\Sel^{\fs}(E/F^{\cyc})  \ar[d]^\alpha \ar[r] &  H^1\big(G_{\Sigma}(F^{\cyc}),\Ep\big)
    \ar[d]^\beta  \ar[r] & \displaystyle \bigoplus_{v\in\Sigma(F^{\cyc})}J_v^\fs(E/F^{\cyc})\ar[d]^{\gamma=\oplus \gamma_v}  \\
    0 \ar[r]^{} & \Sel^{\fs}(E/F_\infty)^{H} \ar[r]^{} & H^1\big(G_{\Sigma}(F_\infty),\Ep\big)^{H} \ar[r]&
    \displaystyle\left(\bigoplus_{w\in\Sigma(F_\infty)}J_w^\fs(E/F_{\infty})\right)^H  } }\]
with exact rows. As seen in the proof of Proposition \ref{H-homology of supersingular}, {for all $w\in\Sss(F_{\infty})$, we have $E(F_{\infty,w})[p^{\infty}]=0$}. Hence we also have $E(F_{\infty})[p^{\infty}]=0$. Combining this observation with a Hochschild-Serre spectral sequence argument, we see that $\beta$ is an isomorphism. Thus, $\alpha$ is injective.

It remains to show that $\ker\gamma$ is cofinitely generated over $\Zp$. By Proposition \ref{H-homology of supersingular}, $\gamma_v$ is injective for $v\in\Sss(F^{\cyc})$.
For $v\in\Sigma(F^{\cyc})\setminus\Sss(F^{\cyc})$, the kernel of $\gamma_v$ is given by $H^1(H_v, D_v(F_{\infty,w}))$ or $H^1(H_v, \Ep(F_{\infty,w}))$ accordingly to $v$ divides $p$ or not, where $H_v$ is the decomposition group of $H$ with respect to a prime $w$ of $F_{\infty}$ above $v$. The conclusion now follows from the fact that the cohomology groups
$H^1(H,W)$ are cofinitely generated over $\Zp$ for any $p$-adic Lie group $H$ and any $\Zp$-cofinitely generated $H$-module $W$. ({Note: In fact, since $\Zp^d$-extension is unramified outside $p$ (cf. \cite[Theorem 1]{Iw73}), we have $H_v=1$ for $v\in\Sigma'(F_\infty)$, and so one even has $\ker\gamma_v=0$ for these primes. This latter observation will be used in the proof of Theorem \ref{main theorem}.})
\end{proof}
 
\begin{remark}
The above lemma is certainly well-known when $E$ has good ordinary reduction (see \cite{CH, CSS, HV}).
Unlike the ordinary case, where the argument is quite formal, the supersingular situation is less straightforward. This is because we do not have a nice enough descent theory for the formal group of an elliptic curve in ramified towers (see \cite[Section 6]{LZ}). In the setting considered in the present article, thanks to hypothesis (S4), we have applied results of Kim \cite{Kim14} and Kitajima-Otuski \cite{KO} to obtain such a descent theory in \S\ref{supersingular subsection}. 
\end{remark}

We now state the following natural generalisation of Conjecture \ref{conj:tor}.

\begin{conjecture}\label{conj:tor2}
For all choices of $\fs\in\{+,-\}^\Sss$, the Selmer group $X^{\fs}(E/F_{\infty})$ is torsion over $\Zp\ps{G}$.
\end{conjecture}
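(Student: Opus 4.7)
The natural strategy is to deduce Conjecture~\ref{conj:tor2} from Conjecture~\ref{conj:tor} via Galois descent---this separates the genuinely arithmetic input (cyclotomic cotorsion) from a purely algebraic step up to $F_\infty$, and is essentially what Theorem~\ref{thm:A} accomplishes.

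The algebraic step proceeds as follows. Dualizing the injection of Lemma~\ref{Galois descent Selmer} yields
\[ 0 \lra K \lra X^\fs(E/F_\infty)_H \lra X^\fs(E/F^\cyc) \lra 0 \]
with $K$ finitely generated over $\Zp$, and hence $\Zp\ps{\Ga}$-torsion. Granted Conjecture~\ref{conj:tor}, the right-hand term is $\Zp\ps{\Ga}$-torsion, and so $H_0(H, X^\fs(E/F_\infty))$ is $\Zp\ps{\Ga}$-torsion. To promote this to $\Zp\ps{G}$-torsion of $X^\fs(E/F_\infty)$, I would control the higher $H$-homology groups via the defining exact sequence
\[ 0 \lra \Sel^\fs(E/F_\infty) \lra H^1(G_\Sigma(F_\infty), \Ep) \lra \bigoplus_{w\in\Sigma(F_\infty)} J_w^\fs(E/F_\infty) \lra 0, \]
which is exact by the Poitou--Tate argument of Proposition~\ref{torsion H2}. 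At ordinary primes the higher $H$-cohomology of $J_w^\fs$ vanishes by Lemma~\ref{ordinary lemma}(c); at supersingular primes it vanishes by Proposition~\ref{H-homology of supersingular}, and Corollary~\ref{H-homology of supersingular2} moreover gives cofreeness of the local quotient. Combined with a Hochschild--Serre analysis of $H^1(G_\Sigma(F_\infty), \Ep)$, these inputs force each $H_i(H, X^\fs(E/F_\infty))$ to be $\Zp\ps{\Ga}$-torsion. The natural generalisation of the rank identity underlying Lemma~\ref{rank formula}, namely $\sum_i (-1)^i \rank_{\Zp\ps{\Ga}} H_i(H, M) = \rank_{\Zp\ps{G}}(M)$ for $M$ finitely generated over $\Zp\ps{G}$, then yields $\rank_{\Zp\ps{G}}(X^\fs(E/F_\infty)) = 0$, as desired.

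The main obstacle is therefore Conjecture~\ref{conj:tor} itself: no purely algebraic descent can replace it. It is known for $F = F' = \QQ$ (Kato \cite{K} in the ordinary case, Kobayashi \cite{Kob} in the supersingular case with $a_p = 0$), and for certain CM situations \cite{BL2}, but for general $F$ it remains open and would require constructing an appropriate multi-variable $p$-adic $L$-function matching $X^\fs(E/F^\cyc)$ through a main-conjecture-style Euler system argument, or otherwise exhibiting a non-zero element of $\Zp\ps{\Ga}$ annihilating $X^\fs(E/F^\cyc)$ by a direct cohomological bound. Once such input is available in a given setting, Conjecture~\ref{conj:tor2} follows from the algebraic descent sketched above.
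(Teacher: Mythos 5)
You have correctly diagnosed that this statement is a conjecture which the paper does not (and cannot) prove outright: what the paper actually establishes is the implication from Conjecture~\ref{conj:tor}, namely Proposition~\ref{tor relate tor2}. Your first step coincides with the paper's: Lemma~\ref{Galois descent Selmer} gives $0\to C^\vee\to X^{\fs}(E/F_\infty)_H\to X^{\fs}(E/F^\cyc)\to 0$ with $C^\vee$ finitely generated over $\Zp$, so $X^{\fs}(E/F_\infty)_H$ is $\Zp\ps{\Ga}$-torsion.

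The gap is in how you promote this to $\Zp\ps{G}$-torsionness. Your plan is to show that \emph{every} $H_i(H,X^{\fs}(E/F_\infty))$ is $\Zp\ps{\Ga}$-torsion by running the $H$-cohomology of the short exact sequence
\[ 0 \lra \Sel^\fs(E/F_\infty) \lra H^1(G_\Sigma(F_\infty), \Ep) \lra \bigoplus_{w} J_w^\fs(E/F_\infty) \lra 0, \]
and then to invoke the Euler-characteristic rank identity. But the exactness of this sequence on the right over $F_\infty$ is precisely Proposition~\ref{torsion2 H2}(b), whose proof \emph{assumes} that $X^{\fs}(E/F_\infty)$ is $\Zp\ps{G}$-torsion (the corank count is used to kill $\mathfrak{S}^{\fs}(E/F_\infty)$ only under that hypothesis). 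The cyclotomic Proposition~\ref{torsion H2} you cite does not give you surjectivity of localization over $F_\infty$. So as written the argument is circular. The repair is both standard and strictly simpler: since $H\cong\Zp^{d-1}$ is abelian, hence solvable, the single fact that the coinvariants $X^{\fs}(E/F_\infty)_H$ are $\Zp\ps{\Ga}$-torsion already forces $X^{\fs}(E/F_\infty)$ to be $\Zp\ps{G}$-torsion --- for $H\cong\Zp$ this is immediate from $\rank_{\Zp\ps{\Ga}}(M_H)=\rank_{\Zp\ps{G}}(M)+\rank_{\Zp\ps{\Ga}}(M^H)$ (the identity underlying Lemma~\ref{rank formula}), and the general case follows by induction on $\dim H$; this is \cite[Lemma 2.6]{HO}, which is exactly what the paper cites. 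No control of the higher $H$-homology is needed at this stage (that finer analysis is deferred to the Akashi series computation in Theorem~\ref{main theorem}, where the surjectivity is legitimately available because torsionness has already been established). Your closing remarks on the arithmetic content of Conjecture~\ref{conj:tor} are apt.
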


When $E$ has good ordinary reduction at all primes above $p$, the above conjecture is a natural extension of Mazur's conjecture (see \cite{CH, CSS, HO, HV,  OcV03}). When $E$ has supersingular reduction with $F'=\QQ$ and $F$ an imaginary quadratic field where $p$ splits, this was studied in \cite{KimPark, LeiP, LeiS}.

Conjecture~\ref{conj:tor2} has the following  consequence, which is analogous to Proposition~\ref{torsion H2} as  a consequence of  Conjecture \ref{conj:tor}.

\begin{proposition} \label{torsion2 H2}
Suppose that $(S1)-(S4)$ are valid, and that $X^{\fs}(E/F_\infty)$ is a torsion $\Zp\ps{G}$-module. Then we have the following assertions.
\begin{enumerate}
\item[$(a)$] $H^2(G_{\Sigma}(F_\infty),\Ep)=0$.
\item[$(b)$] There is a short exact sequence
{\begin{align*}
     0\lra \Sel^{\fs}(E/F^{\cyc})\lra&  H^1(G_{\Sigma}(F_\infty),\Ep)\\
     &\lra \bigoplus_{w\in \Sigma(F_\infty)} J_w^\fs(E/F_\infty) \lra 0.
\end{align*}}
\end{enumerate}
\end{proposition}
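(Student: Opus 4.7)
The plan is to adapt the proof of Proposition~\ref{torsion H2} to the higher-dimensional $F_\infty$ setting, using global Poitou-Tate duality together with the torsion hypothesis on $X^\fs(E/F_\infty)$ to force the $H^2$-vanishing and the surjectivity of the localization map simultaneously.

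Concretely, I would begin with the tautological exact sequence
\[0 \lra \Sel^\fs(E/F_\infty) \lra H^1(G_\Sigma(F_\infty), \Ep) \lra \bigoplus_{w \in \Sigma(F_\infty)} J_w^\fs(E/F_\infty) \lra C \lra 0,\]
where $C$ denotes the cokernel (and the first term in the proposition's statement should be read as $\Sel^\fs(E/F_\infty)$). Passing to the direct limit of the Poitou-Tate nine-term sequence over the finite layers $L_n$ of $F_\infty/F$, one identifies $C$ with the Pontryagin dual of a ``strict'' Selmer group for the Cartier dual $T_pE$ of $\Ep$, and this strict Selmer group in turn fits into an exact sequence involving $H^2(G_\Sigma(F_\infty), \Ep)$ and $X^\fs(E/F_\infty)$. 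Since $H^2(F_{\infty,w}, \Ep) = 0$ for every non-archimedean $w$ (Tate local duality, noting $T_pE$ has trivial $G_{F_{\infty,w}}$-invariants) and for every archimedean $w$ (as $p$ is odd), the Poitou-Tate sequence truncates favorably. A rank/Euler-characteristic count over $\Zp\ps{G}$ then shows that the torsion hypothesis on $X^\fs(E/F_\infty)$ forces both $C = 0$ and $H^2(G_\Sigma(F_\infty), \Ep) = 0$, yielding assertion (a) and the surjectivity required for (b). The left- and middle-exactness in (b) are tautological from the definition of $\Sel^\fs(E/F_\infty)$.

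The main obstacle is the rank bookkeeping over $\Zp\ps{G}$: unlike the one-dimensional $\Zp\ps{\Ga}$, torsionness does not propagate through Iwasawa-theoretic duality as transparently in higher dimensions, and pinning down the precise comparison between $X^\fs(E/F_\infty)$ and the dual strict Selmer group requires care. The cleanest resolution is to invoke Jannsen's spectral sequence, which expresses global Galois cohomology in terms of Iwasawa cohomology modules and makes the requisite rank identifications explicit, paralleling the strategy employed in \cite{LL,LeiS} for the cyclotomic analog.
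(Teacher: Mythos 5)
Your strategy is essentially the paper's: the paper invokes \cite[Proposition A.3.2]{PR00} (a packaged form of the Poitou--Tate sequence over the tower) to obtain the exact sequence
\[0\lra \Sel^{\fs}(E/F_{\infty})\lra H^1(G_{\Sigma}(F_{\infty}),\Ep)\lra \bigoplus_{w\in\Sigma(F_{\infty})}J_w^\fs(E/F_{\infty})\lra \mathfrak{S}^{\fs}(E/F_{\infty})^{\vee}\lra H^2(G_{\Sigma}(F_{\infty}),\Ep)\lra 0,\]
where $\mathfrak{S}^{\fs}(E/F_{\infty})$ is a $\Zp\ps{G}$-submodule of $\HIw(F_\infty/F,T_pE)$, and then shows this error term vanishes. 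Your identification of the cokernel $C$ with the dual of a strict Selmer group of $T_pE$, and your reading of the first term of the displayed sequence as $\Sel^\fs(E/F_\infty)$ (the statement has a typo), are both correct.

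The gap is in the sentence claiming that a rank/Euler-characteristic count ``forces both $C=0$ and $H^2=0$.'' A corank computation --- \cite[Theorem 3.2]{OcV03} for the difference of the global coranks, and \cite[Theorem 4.1]{OcV03} together with Corollary~\ref{H-homology of supersingular2} for the local summands, both sides giving $[F:\QQ]$ --- can only show that $\mathfrak{S}^{\fs}(E/F_{\infty})$ is $\Zp\ps{G}$-\emph{torsion} under the cotorsion hypothesis on the Selmer group; no rank count can force a module to vanish outright. The missing input is that $\mathfrak{S}^{\fs}(E/F_{\infty})$ sits inside $\HIw(F_\infty/F,T_pE)$ and that this Iwasawa cohomology module is \emph{torsion-free}, so that its torsion submodule $\mathfrak{S}^{\fs}(E/F_\infty)$ must be zero. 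Torsion-freeness is exactly what Jannsen's spectral sequence \cite{Jannsen} delivers: its low-degree exact sequence begins with $\Ext^1_{\Zp\ps{G}}\big((E(F_{\infty})[p^{\infty}])^{\vee},\Zp\ps{G}\big)$, which vanishes because $E(F_\infty)[p^\infty]=0$ (a consequence of Lemma~\ref{supersingular points} at the supersingular primes, as in the proof of Lemma~\ref{Galois descent Selmer}), so $\HIw(F_\infty/F,T_pE)$ embeds into an $\Ext^0$-term, which is reflexive by \cite[Corollary 5.1.3]{NSW} and hence torsion-free. So your instinct to bring in Jannsen's spectral sequence is right, but its role is to establish this torsion-freeness, not to ``make rank identifications explicit''; without that step the argument does not close.
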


\begin{proof}
The proof is similar to that of Proposition \ref{torsion H2} with some extra technicality. 
By \cite[Proposition A.3.2]{PR00}, we have an exact sequence
\[0\lra \Sel^{\overrightarrow{s}}(E/F_{\infty})\lra H^1(G_{\Sigma}(F_{\infty}),\Ep)\lra \bigoplus_{w\in\Sigma(F_{\infty})}J_w^\fs(E/F_{\infty})  \]
\[ \lra \mathfrak{S}^{\overrightarrow{s}}(E/F_{\infty})^{\vee}\lra H^2(G_{\Sigma}(F_{\infty}),\Ep)\lra 0,\]
where $\mathfrak{S}^{\overrightarrow{s}}(E/F_{\infty})$ is a $\Zp\ps{G}$-submodule of {$$\HIw(F_{\infty}/F, T_pE): =\plim_n H^1(G_{\Sigma}(L_n),T_pE).$$} (For the precise definition of $\mathfrak{S}^{\overrightarrow{s}}(E/F_{\infty})$, we refer readers to loc. cit. For our purposes, the submodule theoretical information suffices.) The conclusion of the proposition will follow once we can show that $\mathfrak{S}^{\overrightarrow{s}}(E/F_{\infty})=0$.

A standard corank calculation (see \cite[Theorem 3.2]{OcV03}) tells us that
{\begin{align*}
    \mathrm{corank}_{\Zp\ps{G}}\big(H^1(G_{\Sigma}(F_{\infty}),\Ep)\big)- \mathrm{corank}_{\Zp\ps{G}}\big(H^2(G_{\Sigma}(F_{\infty}),\Ep)\big)\\ =|F: \QQ|.
\end{align*}} 
 For the local summands, we also have
\[\mathrm{corank}_{\Zp\ps{G}}\left(\bigoplus_{w\in \Sigma(F_{\infty})}J_w^\fs(E/F_{\infty}) \right) = [F:\QQ],\]
where in the calculations, we made use of \cite[Theorem 4.1]{OcV03} for primes in $\Sigma(F_{\infty})\setminus\Sss(F_{\infty})$ and Corollary \ref{H-homology of supersingular2} for primes in $\Sss(F_{\infty})$.
It follows from these formulas and the above exact sequence that if $\Sel^{\overrightarrow{s}}(E/F_{\infty})$ is a cotorsion $\Zp\ps{G}$-module, then $\mathfrak{S}^{\overrightarrow{s}}(E/F_{\infty})$ is a torsion $\Zp\ps{G}$-module. 

Hence the required assertion $\mathfrak{S}^{\overrightarrow{s}}(E/F_{\infty})=0$ will follow once we can show that $\HIw(F_{\infty}/F, T_pE)$ is a torsion-free $\Zp\ps{G}$-module. To see this, we
first recall the following spectral sequence of Jannsen (\cite[Theorem 1]{Jannsen})
\[ \Ext^i_{\Zp\ps{G}}\big(H^j(G_{\Sigma}(F_{\infty}),\Ep)^{\vee},\Zp\ps{G}\big) \Longrightarrow H_{\mathrm{Iw}}^{i+j}(F_{\infty}/F, T_pE).\] 
By considering the low degree terms, we have an exact sequence
{\begin{align*}
    0\lra \Ext^1_{\Zp\ps{G}}((E(F_{\infty})[p^{\infty}])^{\vee},\Zp\ps{G}\big) \lra \HIw(F_{\infty}/F, T_pE) \\
\lra \Ext^0_{\Zp\ps{G}}\big(H^1(G_{\Sigma}(F_{\infty}),\Ep)^{\vee},\Zp\ps{G}\big).
\end{align*}  }
From the proof of Lemma \ref{Galois descent Selmer}, we have $E(F_{\infty})[p^{\infty}]=0$. Hence the leftmost term vanishes, which in turn implies that $\HIw(F_{\infty}, T_pE)$ injects into an $\Ext^0$-term. But since the latter is a reflexive $\Zp\ps{G}$-module by \cite[Corollary 5.1.3]{NSW}, $\HIw(F_{\infty}/F, T_pE)$ must be torsion-free over $\Zp\ps{G}$. This completes the proof of the proposition.
\end{proof}

We now relate Conjectures \ref{conj:tor} and \ref{conj:tor2}.

\begin{proposition} \label{tor relate tor2}
Suppose that $(S1)-(S4)$ are satisfied.
Assume that $X^{\fs}(E/F^{\cyc})$ is torsion over $\Zp\ps{\Ga}$. Then  {the $\Zp\ps{G}$-module $X^{\fs}(E/F_{\infty})$ is torsion.}
\end{proposition}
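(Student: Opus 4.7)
The plan is a two-step argument. I would first use Lemma \ref{Galois descent Selmer} to reduce the $\Zp\ps{G}$-torsionness of $X^{\fs}(E/F_{\infty})$ to the $\Zp\ps{\Ga}$-torsionness of its $H$-coinvariants, and then upgrade to the full $\Zp\ps{G}$-torsionness via a standard rank inequality for coinvariants over commutative Iwasawa algebras.

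For the first step, Lemma \ref{Galois descent Selmer} produces an exact sequence
\[0 \lra \Sel^{\fs}(E/F^{\cyc}) \lra \Sel^{\fs}(E/F_{\infty})^{H} \lra C \lra 0,\]
in which $C$ is cofinitely generated over $\Zp$. Dualizing, and using the identification $\bigl(\Sel^{\fs}(E/F_{\infty})^{H}\bigr)^{\vee}\cong X^{\fs}(E/F_{\infty})_{H}$, one obtains
\[0 \lra C^{\vee} \lra X^{\fs}(E/F_{\infty})_{H} \lra X^{\fs}(E/F^{\cyc}) \lra 0,\]
with $C^{\vee}$ finitely generated over $\Zp$. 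Now $C^{\vee}$ is automatically $\Zp\ps{\Ga}$-torsion (any finitely generated $\Zp$-module with $\Ga$-action is annihilated by some nonzero element of $\Zp\ps{\Ga}$, for instance by Cayley--Hamilton applied to the action of $\gamma-1$), and $X^{\fs}(E/F^{\cyc})$ is $\Zp\ps{\Ga}$-torsion by hypothesis. Hence $X^{\fs}(E/F_{\infty})_{H}$ is $\Zp\ps{\Ga}$-torsion.

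For the second step, I would invoke the following general rank inequality: for any finitely generated $\Zp\ps{G}$-module $M$,
\[\rank_{\Zp\ps{G}}(M)\leq \rank_{\Zp\ps{\Ga}}(M_{H}).\]
To prove this, set $R = \Zp\ps{G}$ and let $I = \ker(R \twoheadrightarrow \Zp\ps{\Ga})$, a prime ideal of $R$ since the quotient is an integral domain. Localizing at $I$ produces a local domain $R_{I}$ with residue field $\mathrm{Frac}(\Zp\ps{\Ga})$; by Nakayama's Lemma, the minimal number of $R_{I}$-generators of $M_{I}$ equals $\dim_{\mathrm{Frac}(\Zp\ps{\Ga})}(M_{I}/IM_{I}) = \rank_{\Zp\ps{\Ga}}(M_{H})$, and this minimal number is an upper bound for $\rank_{R_{I}}(M_{I}) = \rank_{\Zp\ps{G}}(M)$. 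Applying this to $M = X^{\fs}(E/F_{\infty})$ and combining with the first step gives $\rank_{\Zp\ps{G}}(X^{\fs}(E/F_{\infty})) = 0$, which is the desired $\Zp\ps{G}$-torsionness.

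The main obstacle is really just isolating the correct rank inequality; once it is in hand, the rest is formal. An alternative route, perhaps more in keeping with the Iwasawa-theoretic flavor of the paper, is to iterate Lemma \ref{rank formula} along a chain $H = H^{(0)} \supset H^{(1)} \supset \cdots \supset H^{(d-1)} = 1$ with successive quotients isomorphic to $\Zp$, applying the $n=0$ case of (the natural generalization of) the rank formula at each stage to propagate torsionness one step at a time up the tower.
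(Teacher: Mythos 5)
Your proof is correct and follows the same two-step strategy as the paper: Lemma~\ref{Galois descent Selmer} shows (after dualizing) that $X^{\fs}(E/F_{\infty})_H$ is torsion over $\Zp\ps{\Ga}$, and one then ascends to torsionness over $\Zp\ps{G}$. The only difference is that the paper cites Hachimori--Ochiai \cite[Lemma 2.6]{HO} for the second step (a statement valid for any solvable $H$), whereas you supply a correct self-contained proof of the needed rank inequality $\rank_{\Zp\ps{G}}(M)\leq \rank_{\Zp\ps{\Ga}}(M_{H})$ by localizing at the relative augmentation ideal, which is perfectly adequate here since $G\cong\Zp^d$ is commutative.
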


\begin{proof}
 It follows from Lemma \ref{Galois descent Selmer} that $X^{\fs}(E/F_{\infty})_H$ is torsion over $\Zp\ps{\Ga}$. Since $H$ is abelian (and hence solvable), we may apply \cite[Lemma 2.6]{HO} to conclude that $X^{\fs}(E/F_{\infty})$ is torsion over $\Zp\ps{G}$.
\end{proof}

In what follows, we discuss a natural extension of the $\mathfrak{M}_H(G)$-conjecture formulated by Coates et al. in \cite{CFKSV} for the signed Selmer groups in our setting. Although we do not use it in subsequent calculations, it may be of independent interest. See also \cite{HL, LZ} on this subject.

\begin{conjecture}[$\mathfrak{M}_H(G)$-conjecture] \label{MHG}
 For all choices of $\fs$, the module $X^{\fs}(E/F_{\infty})/X^{\fs}(E/F_{\infty})[p^{\infty}]$ is finitely generated over $\Zp\ps{H}$.
\end{conjecture}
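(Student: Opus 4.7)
The plan is to adapt the strategy introduced in \cite{CFKSV} for the ordinary setting, leveraging the structural results established in Sections~\ref{elliptic curve over local} and \ref{Selmer}. Write $Y=X^{\fs}(E/F_\infty)$. By Conjecture~\ref{conj:tor} combined with Proposition~\ref{tor relate tor2}, one may assume $Y$ is $\Zp\ps{G}$-torsion, so that Proposition~\ref{torsion2 H2}(b) yields, upon dualizing,
\[
0 \lra \bigoplus_{w\in\Sigma(F_\infty)} J_w^{\fs}(E/F_\infty)^\vee \lra H^1(G_\Sigma(F_\infty),\Ep)^\vee \lra Y \lra 0.
\]
The idea is to analyze each of the outer terms modulo its $p$-primary part and combine via the snake lemma.

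For the local terms, Corollary~\ref{H-homology of supersingular2} shows that $J_w^{\fs}(E/F_\infty)^\vee$ is a free $\Zp\ps{G_w}$-module of finite rank for $w\in\Sss(F_\infty)$, hence $p$-torsion free and finitely generated over $\Zp\ps{H}$ after accounting for the decomposition groups allowed by hypothesis (S4). For $w\in\So(F_\infty)$, Lemma~\ref{ordinary lemma} identifies $J_w^{\fs}(E/F_\infty)$ with $H^1(F_{\infty,w},D_w)$, and standard ordinary Iwasawa-theoretic arguments, applied to the explicit description of $D_w$ in \eqref{eq:D}, should give finite generation of the non-$p$-primary part over $\Zp\ps{H_w}$. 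The primes in $\Sigma'(F_\infty)$ pose no difficulty since $H_w=1$ there, as recorded in the proof of Lemma~\ref{Galois descent Selmer}.

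The crux is then the global term $H^1(G_\Sigma(F_\infty),\Ep)^\vee$. Jannsen's spectral sequence, as exploited in the proof of Proposition~\ref{torsion2 H2}, relates it to $\HIw(F_\infty/F, T_pE)$, which was shown there to be reflexive and $\Zp\ps{G}$-torsion free. What is still needed is $\Zp\ps{H}$-finite generation of the non-$p$-primary quotient. The natural route is to reduce modulo $p$: using the sequence $0\to E[p]\to\Ep\to\Ep\to 0$ and the resulting long exact sequence in $G_\Sigma(F_\infty)$-cohomology, argue that $H^1(G_\Sigma(F_\infty),E[p])$ is cofinitely generated over $\mathbb{F}_p\ps{H}$, and then lift via Nakayama's lemma applied to $Y/pY$.

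The hard part will be this last mod-$p$ step. Controlling $H^1(G_\Sigma(F_\infty),E[p])$ over $\mathbb{F}_p\ps{H}$ requires bounding both $H^2(G_\Sigma(F_\infty),E[p])$ and $H^0(G_\Sigma(F_\infty),E[p])$, which in practice forces a genericity hypothesis on the residual representation $E[p]$, such as absolute irreducibility as a $\Gal(\bar{F}/F)$-module, together with a careful analysis of the decomposition groups at primes in $\Sss(F_\infty)$ (where the framework of \cite{Kim14, KO} must be invoked in characteristic $p$). Without such a hypothesis, the conjecture remains open even in the classical ordinary case treated in \cite{CFKSV, CSS}, and one should not expect the mixed supersingular-ordinary framework considered here to be any easier.
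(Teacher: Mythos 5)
The statement you are trying to prove is labelled a \emph{conjecture} in the paper, and the paper does not prove it: the $\mathfrak{M}_H(G)$-conjecture is open even in the classical good ordinary setting of \cite{CFKSV, CSS}, as you yourself concede in your final paragraph. The only result the paper offers in this direction is Proposition~\ref{mu MHG}, which establishes the (stronger) conclusion that $X^{\fs}(E/F_\infty)$ is finitely generated over $\Zp\ps{H}$ \emph{under the additional hypothesis} that $X^{\fs}(E/F^{\cyc})$ is finitely generated over $\Zp$ (a $\mu=0$ type assumption), by combining Lemma~\ref{Galois descent Selmer} with Nakayama's lemma. So your proposal cannot be "correct": at best it is a plan whose central step you admit is missing.

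Beyond that concession, two steps of your plan are structurally flawed. First, the term-by-term analysis of the dualized sequence from Proposition~\ref{torsion2 H2}(b) cannot work: $H^1(G_\Sigma(F_\infty),\Ep)^\vee$ has $\Zp\ps{G}$-rank equal to $[F:\QQ]>0$ (this is exactly the corank computation in the proof of Proposition~\ref{torsion2 H2}), so neither it nor its quotient by $p$-primary torsion is finitely generated over $\Zp\ps{H}$; the whole content of the conjecture is the cancellation of this global rank against the local terms, which a snake-lemma argument treating each term separately cannot see. Second, your mod-$p$ endgame (show $H^1(G_\Sigma(F_\infty),E[p])$ is cofinitely generated over $\mathbb{F}_p\ps{H}$ and lift via Nakayama applied to $Y/pY$) would, if it succeeded, prove that $Y=X^{\fs}(E/F_\infty)$ itself is finitely generated over $\Zp\ps{H}$. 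That is false in general whenever the $\mu$-invariant is nonzero, which is precisely why the conjecture is formulated only for the quotient $Y/Y[p^\infty]$; moreover the input claim is itself false, since $H^1(G_\Sigma(F_\infty),E[p])$ has positive $\mathbb{F}_p\ps{G}$-corank. Finally, note that you may not "assume" Conjecture~\ref{conj:tor} in a proof; if you want to work under the torsion hypothesis you must state it as a hypothesis, as the paper does in Proposition~\ref{tor relate tor2}. If your goal is the partial result the paper actually proves, the efficient route is the one in Proposition~\ref{mu MHG}: Lemma~\ref{Galois descent Selmer} gives that $X^{\fs}(E/F_\infty)_H$ is finitely generated over $\Zp$ when $X^{\fs}(E/F^{\cyc})$ is, and Nakayama's lemma concludes.
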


We give a partial evidence towards this conjecture (compare with \cite[Proposition 5.6]{CFKSV} and  \cite[Theorem 6.4]{CH}). Again, we remind the reader that the result is available in this context thanks to hypothesis (S4) and descent results of Kim and Kitajima-Otsuki \cite{Kim14, KO}. 

\begin{proposition} \label{mu MHG} 
Suppose that $(S1)-(S4)$ are satisfied.
Assume that $X^{\fs}(E/F^{\cyc})$ is finitely generated over $\Zp$. Then the dual Selmer group $X^{\fs}(E/F_{\infty})$ is finitely generated over $\Zp\ps{H}$. In particular, the $\mathfrak{M}_H(G)$-conjecture is valid.
\end{proposition}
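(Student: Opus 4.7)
The plan is to use the descent result of Lemma \ref{Galois descent Selmer} together with a topological Nakayama-type argument. First, I would take Pontryagin duals of the short exact sequence
\[ 0\lra \Sel^{\fs}(E/F^{\cyc})\lra \Sel^{\fs}(E/F_{\infty})^H\lra C\lra 0\]
supplied by Lemma \ref{Galois descent Selmer}, where $C$ is cofinitely generated over $\Zp$. Dualizing (and recalling that taking $H$-invariants on a discrete module corresponds to taking $H$-coinvariants on the dual) produces a four-term exact sequence
\[ 0\lra C^\vee \lra X^{\fs}(E/F_{\infty})_H\lra X^{\fs}(E/F^{\cyc})\lra 0,\]
in which $C^\vee$ is finitely generated over $\Zp$ by construction and $X^{\fs}(E/F^{\cyc})$ is finitely generated over $\Zp$ by hypothesis.

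Combining these two facts, $X^{\fs}(E/F_{\infty})_H$ is a finitely generated $\Zp$-module. Reducing further modulo $p$, the quotient $X^{\fs}(E/F_{\infty})/\bigl(p,\,I_H\bigr)X^{\fs}(E/F_{\infty})$ is then a finite $\mathbb{F}_p$-vector space, where $I_H$ denotes the augmentation ideal of $\Zp\ps{H}$. Since $\Zp\ps{H}$ is a compact local Noetherian ring with maximal ideal $(p,I_H)$ and $X^{\fs}(E/F_\infty)$ is compact, the topological form of Nakayama's lemma applies: lifting an $\mathbb{F}_p$-basis of the above quotient to $X^{\fs}(E/F_{\infty})$ gives a finite set of generators over $\Zp\ps{H}$. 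This yields the first assertion.

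For the second assertion, the quotient $X^{\fs}(E/F_{\infty})/X^{\fs}(E/F_{\infty})[p^{\infty}]$ is a quotient of $X^{\fs}(E/F_{\infty})$, and hence is automatically finitely generated over $\Zp\ps{H}$, establishing the $\mathfrak{M}_H(G)$-conjecture in this setting.

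The only delicate point is the passage from $X^{\fs}(E/F_{\infty})_H$ being finitely generated over $\Zp$ to $X^{\fs}(E/F_{\infty})$ being finitely generated over $\Zp\ps{H}$; this requires the compact Nakayama lemma rather than the classical one, but since $\Zp\ps{H}$ is a (commutative, since $H\cong \Zp$) compact local Noetherian ring and our module is compact, this step is standard. The rest of the argument is then a formal consequence of Lemma \ref{Galois descent Selmer} and the given hypothesis.
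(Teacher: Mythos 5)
Your proof is correct and follows essentially the same route as the paper: dualize the exact sequence from Lemma \ref{Galois descent Selmer} to see that $X^{\fs}(E/F_{\infty})_H$ is finitely generated over $\Zp$, then apply the topological Nakayama lemma over the local ring $\Zp\ps{H}$. (One harmless slip: in this setting $H=\Gal(F_\infty/F^\cyc)\cong\Zp^{d-1}$ rather than $\Zp$, but $\Zp\ps{H}$ is still a commutative compact local Noetherian ring, so the argument is unaffected.)
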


\begin{proof}
 It follows from Lemma \ref{Galois descent Selmer} and the hypothesis of the proposition that $X^{\fs}(E/F_{\infty})_H$ is finitely generated over $\Zp$. The conclusion thus follows from Nakayama's Lemma.
\end{proof}

\section{Proofs of main results}\label{proofs}

This section is devoted to proving the main results of the article (Theorems~\ref{thm:A} and \ref{thm:E} in the introduction). Throughout, we retain the setting and notation of \S\ref{Selmer}.  Furthermore, we fix a choice of $\fs=(s_v)_{v\in\Sss}\in\{+,-\}^\Sss$.

For convenience, we identify $\Zp\ps{\Ga}\cong \Zp\ps{T}$ under a choice of a topological generator of $\Ga$. We first prove our result on Akashi series of signed Selmer groups (Theorem~\ref{thm:A}).
\begin{theorem} \label{main theorem}
Suppose that $(S1)-(S4)$ are satisfied.
Assume that {the $\Zp\ps{\Ga}$-module $X^{\fs}(E/F^{\cyc})$ is torsion.} Then $X^{\fs}(E/F_{\infty})$ is torsion over $\Zp\ps{G}$, whose Akashi series is well-defined and is given by
\[ \Ak_H(X^{\fs}(E/F_{\infty})) = T^r \cdot \Char_{\Ga}(X^{\fs}(E/F^{\cyc})),\]
where $r$ is the number of primes of $F^{\cyc}$ above $p$ with nontrivial decomposition group in $F_{\infty}/F^{\cyc}$ and at which $E$ has split multiplicative reduction.
\end{theorem}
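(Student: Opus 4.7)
The plan is to derive the Akashi series of $X^\fs(E/F_\infty)$ from a careful comparison of the long exact sequence in $H$-homology coming from the dualized Poitou--Tate sequence of Proposition~\ref{torsion2 H2}(b) with its cyclotomic analogue from Proposition~\ref{torsion H2}(b). The $\Zp\ps{G}$-cotorsion of $X^\fs(E/F_\infty)$ is immediate from Proposition~\ref{tor relate tor2}. For well-definedness of $\Ak_H(X^\fs(E/F_\infty))$, I first use the Hochschild--Serre spectral sequence, combined with $H^2(G_\Sigma(F_\infty),\Ep)=0=H^2(G_\Sigma(F^\cyc),\Ep)$ (from Propositions~\ref{torsion H2}(a) and~\ref{torsion2 H2}(a)) and $E(F_\infty)[p^\infty]=0$ (noted in the proof of Lemma~\ref{Galois descent Selmer}), to deduce $H_i(H, H^1(G_\Sigma(F_\infty),\Ep)^\vee)=0$ for $i\geq1$ and $H_0=H^1(G_\Sigma(F^\cyc),\Ep)^\vee$. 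A parallel local analysis, via Shapiro's lemma, Proposition~\ref{H-homology of supersingular}, and the local vanishing $H^{\geq 2}(F_{\infty,w},D_w)=0$, shows that each $H_i(H,\bigoplus_w J_w^{\fs,\vee})$ is $\Zp\ps{\Ga}$-torsion for $i\geq 1$. Feeding these into the long exact sequence then yields torsion of $H_i(H,X^\fs(E/F_\infty))$ in every degree.

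The core computation is local and prime-by-prime. For $v\in\Sss(F^\cyc)$ and for $v$ with trivial decomposition group in $H$ (including all $v\in\Sigma'(F^\cyc)$, since a $\Zp^d$-extension is unramified outside $p$), Proposition~\ref{H-homology of supersingular} and triviality respectively force
\[H_0(H_v, J_w^{\fs,\vee})=J_v^\fs(E/F^\cyc)^\vee, \qquad H_i(H_v, J_w^{\fs,\vee})=0 \text{ for } i\geq 1,\]
so these primes contribute nothing exotic. The interesting case is $v\in\So(F^\cyc)$ with $H_v\neq 1$. Here the local Hochschild--Serre spectral sequence
\[H^p(H_v, H^q(F_{\infty,w}, D_w)) \;\Longrightarrow\; H^{p+q}(F^\cyc_v, D_w),\]
together with the vanishing $H^{\geq 2}(F_{\infty,w}, D_w) = 0 = H^{\geq 2}(F^\cyc_v, D_w)$, collapses onto a four-term exact sequence involving $H^1(H_v, D_w(F_{\infty,w}))$, $H^1(F^\cyc_v, D_w)$, $H^1(F_{\infty,w}, D_w)^{H_v}$, and $H^2(H_v, D_w(F_{\infty,w}))$, together with isomorphisms $H^p(H_v, H^1(F_{\infty,w}, D_w)) \cong H^{p+2}(H_v, D_w(F_{\infty,w}))$ for $p\geq 1$. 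An Euler characteristic bookkeeping on the Pontryagin duals reveals that, up to units, the local contribution of $v$ to the ratio $\Char_\Ga(J_v^\fs(E/F^\cyc)^\vee)/\Ak_{H_v}(J_w^{\fs,\vee})$ equals $\Char_{\Ga}(D_w(F^\cyc_v)^\vee)\cdot\Ak_{H_v}(D_w(F_{\infty,w})^\vee)^{-1}$, and Lemma~\ref{ordinary lemma}(c) simplifies the second factor to $1$.

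Combining everything with the dualized cyclotomic exact sequence of Proposition~\ref{torsion H2}(b), I reach the formula
\[\Ak_H(X^\fs(E/F_\infty)) \;=\; \Char_{\Ga}(X^\fs(E/F^\cyc))\cdot\prod_{v\in\So(F^\cyc),\,H_v\neq 1}\Char_{\Ga}(D_w(F^\cyc_v)^\vee)\]
up to units. Finally, Lemma~\ref{ordinary lemma}(b) identifies $D_w(F^\cyc_v)$: it is finite (hence pseudo-null over $\Zp\ps{\Ga}$, so its characteristic equals $1$ up to units) in the good-ordinary and non-split multiplicative cases, and equals $\Qp/\Zp$ precisely when $v$ is split multiplicative, in which case $\Char_{\Ga}(\Zp)=T$. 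Thus only the split multiplicative primes with nontrivial $H_v$ contribute, each a factor of $T$, producing $T^r$ and the claimed formula. The main technical obstacle is the Euler characteristic bookkeeping at ordinary primes with $\rank_{\Zp} H_v\geq 2$, where the $d_2$ differentials in the spectral sequence are genuinely nontrivial and the compensation provided by Lemma~\ref{ordinary lemma}(c) must be tracked carefully across all higher degrees.
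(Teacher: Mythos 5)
Your proposal is correct and follows essentially the same route as the paper: cotorsionness via Proposition~\ref{tor relate tor2}, well-definedness and the formula via comparing the defining exact sequences over $F^\cyc$ and $F_\infty$ through $H$-(co)homology, using the degeneration of the global Hochschild--Serre spectral sequence, Proposition~\ref{H-homology of supersingular} at supersingular primes, triviality of $H_v$ away from $p$, and Lemma~\ref{ordinary lemma}(b),(c) to reduce the ordinary/multiplicative contributions to $\Char_\Ga(D_v(F^\cyc_v)^\vee)$, yielding $T^r$. The only (immaterial) difference is organizational: the paper runs a snake-lemma argument on the global diagram and then continues the long exact sequence, whereas you package the same local computation as a prime-by-prime Euler-characteristic calculation with the local spectral sequence for $D_w$.
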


\begin{proof}
The first assertion is precisely Proposition \ref{tor relate tor2}.
  Hence it follows from Propositions \ref{torsion H2} and \ref{torsion2 H2} that $H^2(G_{\Sigma}(\mathcal{F}),\Ep) = 0$ for $\mathcal{F} = F^{\cyc}, F_{\infty}$. Also, we have $H^0(K_{\infty}, \Ep) =0$ by the proof of Lemma \ref{Galois descent Selmer}. Hence the spectral sequence \[ H^i\big(H, H^j(G_\Sigma(F_{\infty}), \Ep)\big)\Rightarrow H^{i+j}(G_\Sigma(F^{\cyc}),\Ep) \]degenerates to yield
  \begin{equation}\label{eq:H1global}
       H^i\left(H, H^1(G_\Sigma(F_{\infty}), \Ep)\right) =\begin{cases} H^1(G_\Sigma(F^{\cyc}), \Ep),  & \mbox{if $i=0$}, \\
0, & \mbox{if $i\geq 1$}.\end{cases}
  \end{equation}
On the other hand, it follows from Propositions \ref{torsion H2} and \ref{torsion2 H2} that we have a short exact sequence
\begin{equation}\label{eq:SES}
    0\lra \Sel^{\fs}(E/\mathcal{F})\lra H^1(G_{\Sigma}(\mathcal{F}),\Ep)\lra \bigoplus_{u\in\Sigma(\mathcal{F})}J_u^\fs(E/\mathcal{F}) \lra 0
\end{equation}
for $\mathcal{F} = F^{\cyc}$ and $ F_{\infty}$, and where as before, we write $J_u(E/\mathcal{F})$ for the local terms. The short exact sequence \eqref{eq:SES} for $\cF=F^{\cyc}$ and the  $H$-cohomology long exact sequence associated to \eqref{eq:SES} when $\cF=F_\infty$ fit in the following commutative diagram
\[   {\entrymodifiers={!! <0pt, .8ex>+} \SelectTips{eu}{}\xymatrixcolsep{0.75pc}\xymatrix{
    0 \ar[r]^{} &\Sel^{\fs}(E/F^{\cyc})  \ar[d] \ar[r] &  H^1\big(G_{\Sigma}(F^{\cyc}),\Ep\big) 
    \ar[d] \ar[r] & \displaystyle \bigoplus_{v\in\Sigma(F^{\cyc})}J^{\fs}_v(E/F^{\cyc})\ar[d]^{g=\oplus g_v} \longrightarrow 0 \\
    0 \ar[r]^{} & \Sel^{\fs}(E/F_\infty)^{H} \ar[r]^{} & H^1\big(G_{\Sigma}(F_\infty),\Ep\big)^{H} \ar[r] &
    \displaystyle\left(\bigoplus_{w\in\Sigma(F_\infty)}J^{\fs}_w(E/F_{\infty})\right)^H &  } }\]
with exact rows.

As already seen in the proof of Lemma \ref{Galois descent Selmer}, the middle vertical map is an isomorphism. By Proposition~\ref{H-homology of supersingular}, $g_v$ is an isomorphism for $v\in\Sss(F^{\cyc})$. Since $\Zp^d$-extension is unramified outside $p$ (cf. \cite[Theorem 1]{Iw73}), it follows that $H_v =1$ for primes outside $p$, and so we also have that $g_v$ is an isomorphism for such prime. In conclusion, we have the following short exact sequence
{\begin{align*}
    0\lra \Sel^\fs(E/F^{\cyc})\lra \Sel^\fs(E/F_{\infty})^H &\lra\\ \bigoplus_{\substack{v\in\So(F^{\cyc})\\ \dim H_v \geq 1}}&H^1(H_v, D_v(F_{\infty})) \lra 0
\end{align*} }
by the snake lemma and the isomorphisms
 \[ H^i\left(H, \Sel^\fs(E/F_\infty)\right) \cong \bigoplus_{\substack{v\in\So(F^{\cyc})\\ \dim H_v \geq 1}}H^{i+1}(H_v, D_v(F_{\infty}))\]
 for $i\geq 1$, coming from the  $H$-cohomology long exact sequence associated to \eqref{eq:SES} when $\cF=F_\infty$ (thanks to the vanishing of $    H^i\left(H, H^1(G_\Sigma(F_{\infty}), \Ep)\right)$ as given by \eqref{eq:H1global}). {Via the duality 
 \[
 H_i(H, M)\cong H^i(H,M^\vee)^\vee,
 \]
  the above calculations can be translated to yield}
 \[ \Ak_H(X^\fs(E/F_{\infty})) = \Char_\Gamma(X^\fs(E/F^{\cyc}))  \cdot \prod_{\substack{v\in\So(F^{\cyc})\\ \dim H_v \geq 1}}\frac{\Char_\Gamma(D_v(F^{\cyc}_v)^{\vee})}{\Ak_{H_v}(D_v(F_{\infty,w})^{\vee})} .\]
But by Lemma \ref{ordinary lemma}, $\Ak_{H_v}(D_v(F_{\infty,w})^{\vee})=1$. Also, if $v$ is a prime of good ordinary reduction or non-split multiplicative reduction, then $D_v(F^{\cyc}_v)$ is finite and so $\Char_\Gamma(D_v(F^{\cyc}_v)^{\vee})=1$. Finally, if $v$ is a prime of split multiplicative reduction, we have $\Char_\Gamma(D_v(F^{\cyc}_v)^{\vee}) = \Char_\Gamma(\Zp) =T$. Thus, we have proven our theorem.
\end{proof}

\begin{remark}
In \cite{CSS, Ze11}, the Akashi series are computed under the validity of the $\mathfrak{M}_H(G)$-conjecture. However, as noted in \cite[pp. 284]{LimMHG}, one can perform these computations under the weaker hypothesis that the Pontryagin dual of the signed Selmer group over $F_\infty$ is a torsion $\Zp\ps G$-module. 
\end{remark}

We introduce one last hypothesis.
\begin{itemize}
\item[(S5)] \begin{enumerate}[(a)]
     \item The elliptic curve $E$ has good reduction at all primes above $p$;
    \item For our fixed choice of $\fs$, we have $4\nmid |F_v:\Qp|$ whenever $s_v = +$.
\end{enumerate}
\end{itemize}

We can now prove Theorem~\ref{thm:E}.

\begin{theorem}\label{thm:Euler}
Suppose that $(S1)-(S5)$ are satisfied. If  the $p$-primary Selmer group $\Sel(E/F)$ is finite,  then $\chi(G,X^{\fs}(E/F_{\infty}))$ is well-defined and is given by
\[ \chi(G,X^{\fs}(E/F_{\infty})) = |\sha(E/F)[p^\infty]|\times\prod_{v\in \Sigma'} c_v^{(p)}\times\prod_{v\in \So}(d_v^{(p)})^2.\]
Here, $c_v^{(p)}$ is the highest power of $p$ dividing $|E(F_v):E_0(F_v)|$, where $E_0(F_v)$ is the subgroup of $E(F_v)$ consisting of points with nonsingular reduction modulo $v$, and $d_v^{(p)}$ is the highest power of $p$ dividing $|\tilde{E}_v(f_v)|$, where $f_v$ is the residue field of $F_v$.
\end{theorem}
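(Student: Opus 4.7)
The plan is to combine Theorem~\ref{main theorem} with Proposition~\ref{Akashi Euler} and the known $\Ga$-Euler characteristic formula for signed Selmer groups over the cyclotomic $\Zp$-extension. First I would observe that under hypothesis (S5)(a), $E$ has good reduction at every prime of $F$ above $p$, so in particular no prime of $F^\cyc$ above $p$ is a prime of split multiplicative reduction. The integer $r$ in Theorem~\ref{main theorem} therefore vanishes, giving
\[\Ak_H(X^\fs(E/F_\infty)) = \Char_\Ga(X^\fs(E/F^\cyc))\]
up to a unit in $\Zp\ps{\Ga}$.

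Next, the $\Ga$-Euler characteristic of $X^\fs(E/F^\cyc)$ under hypotheses (S1)-(S5) has been computed in \cite{AL}, with (S5)(b) being the Kitajima-Otsuki condition needed to control the $+$-components. Combining this with the observation that $E(F)[p^\infty] = 0$ (which follows from Lemma~\ref{supersingular points} applied at a supersingular prime of $F$, so that $|\Sel_{p^\infty}(E/F)| = |\sha(E/F)[p^\infty]|$) identifies $\chi(\Ga, X^\fs(E/F^\cyc))$ with the claimed product. The standard identity $\chi(\Ga, N) = |\vp(\Char_\Ga(N))|_p^{-1}$ for torsion $\Zp\ps{\Ga}$-modules with well-defined $\Ga$-Euler characteristic then yields
\[|\vp(\Char_\Ga(X^\fs(E/F^\cyc)))|_p^{-1} = |\sha(E/F)[p^\infty]| \cdot \prod_{v \in \Sigma'} c_v^{(p)} \cdot \prod_{v \in \So} (d_v^{(p)})^2.\]

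The final step is to invoke Proposition~\ref{Akashi Euler}, which gives $\chi(G, X^\fs(E/F_\infty)) = |\vp(\Ak_H(X^\fs(E/F_\infty)))|_p^{-1}$ provided that the $G$-Euler characteristic is well-defined; combining this with the first two paragraphs will then finish the proof. The main obstacle is therefore to establish this well-definedness. I would do so by feeding the explicit descriptions of $H_j(H, X^\fs(E/F_\infty))$ obtained in the proof of Theorem~\ref{main theorem} (namely, exact sequences involving $X^\fs(E/F^\cyc)$ and duals of $H^{j+1}(H_v, D_v(F_\infty))$ at ordinary primes $v$, the latter being $\Zp$-cofinitely generated by Lemma~\ref{ordinary lemma}(c) and Lemma~\ref{Akashi fg Zp}) into the Hochschild-Serre spectral sequence $H_i(\Ga, H_j(H, X^\fs(E/F_\infty))) \Rightarrow H_{i+j}(G, X^\fs(E/F_\infty))$. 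The $\Zp$-cofinite generation of the local contributions makes the $\Ga$-cohomology of each $H_j$ finite, which together with the already established well-definedness of $\chi(\Ga, X^\fs(E/F^\cyc))$ reduces to showing that every $H_{i+j}(G, X^\fs(E/F_\infty))$ is finite. Once well-definedness is secured, Proposition~\ref{Akashi Euler} delivers the formula.
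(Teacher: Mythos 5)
Your first three paragraphs follow the paper's route exactly: under (S5)(a) the exponent $r$ in Theorem~\ref{main theorem} vanishes, the cyclotomic Euler characteristic is taken from \cite{AL}, and Proposition~\ref{Akashi Euler} reduces everything to the well-definedness of $\chi(G,X^{\fs}(E/F_{\infty}))$. The gap is in your final paragraph. The assertion that ``the $\Zp$-cofinite generation of the local contributions makes the $\Ga$-cohomology of each $H_j$ finite'' is false as stated: a $\Zp$-cofinitely generated module can have infinite $\Ga$-(co)homology (e.g.\ $\Qp/\Zp$ with trivial action). To push the spectral sequence $H_i(\Ga,H_j(H,\cdot))\Rightarrow H_{i+j}(G,\cdot)$ through, you would need to prove that $H_i\bigl(\Ga_v,H^{j}(H_v,D_v(F_{\infty,w}))^\vee\bigr)$ is finite for all $i,j$ at each ordinary prime $v$; this is a genuine local computation which ultimately rests on the finiteness of $D_v(F_v)$ for good ordinary reduction (the kind of analysis carried out in \cite{Gr03} and \cite{CH}), not a formal consequence of cofinite generation over $\Zp$. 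Note also that hypothesis (S5)(b) plays no role in your well-definedness argument, whereas it must intervene at the level of $F_\infty$ and not only through the cyclotomic computation of \cite{AL}.

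The paper avoids the spectral sequence entirely at this step. Since $G\cong\Zp^d$, it invokes Wadsley's theorem \cite{Wad}, which reduces the well-definedness of $\chi(G,X^{\fs}(E/F_{\infty}))$ to the finiteness of the single group $\Sel^{\fs}(E/F_{\infty})^G$. That finiteness is then established by the fundamental diagram comparing $\Sel(E/F)$ with $\Sel^{\fs}(E/F_{\infty})^G$ over the base field $F$: the middle vertical map is an isomorphism, the local kernels $\ker l_v$ are finite for $v\in\Sigma\setminus\Sss$ by \cite[Propositions 4.1 and 4.5]{Gr03}, and for $v\in\Sss$ one uses \cite[Proposition~2.18]{Kim14} --- this is precisely where (S5)(b) enters --- to show that $l_v$ is injective. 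You should either adopt this reduction or supply the missing local finiteness statements; as written, the well-definedness step does not go through.
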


\begin{proof}
By \cite[Theorem 2.3]{AL}, we have that $\Sel^{\fs}(E/F^{\cyc})$ is cotorsion over $\Zp\ps{\Ga}$ and that
\[ \chi(\Gamma,X^{\fs}(E/F^\cyc)) = |\sha(E/F)[p^{\infty}]|\times\prod_{v\in\Sigma'} c_v^{(p)}\times\prod_{v\in \So}(d_v^{(p)})^2.\]
In view of Proposition \ref{Akashi Euler} and Theorem \ref{main theorem}, it remains to show that the $G$-Euler characteristics of $X^{\fs}(E/F_{\infty})$ is well-defined. Since $G\cong\Zp^d$, it is sufficient to show that  $\Sel^\fs(E/F_{\infty})^G$ is finite by \cite[part (1) of the theorem on P.3455]{Wad}. Let $J_v(E/F)$ denote the quotient $\displaystyle\frac{H^1(F_v,\Ep)}{E(F_v)\otimes\Qp/\Zp}$ for $v\in\Sigma$. Consider the following diagram
\begin{equation}\label{diagram:euler}
 {\entrymodifiers={!! <0pt, .8ex>+} \SelectTips{eu}{}\xymatrixcolsep{0.75pc}\xymatrix{
    0 \ar[r]^{} &\Sel(E/F)  \ar[d] \ar[r] &  H^1\big(G_{\Sigma}(F),\Ep\big)
    \ar[d] \ar[r] & \displaystyle \bigoplus_{v\in\Sigma}J_v(E/F)\longrightarrow0\ar[d]^{\oplus l_v}  \\
    0 \ar[r]^{} & \Sel^{\fs}(E/F_\infty)^{G} \ar[r]^{} & H^1\big(G_{\Sigma}(F_\infty),\Ep\big)^{G} \ar[r] &
    \displaystyle\left(\bigoplus_{w\in\Sigma(F_\infty)}J_w^\fs(E/F_{\infty})\right)^G  } }
    \end{equation}
with exact rows (the surjectivity of the first row follows from the finiteness of $\Sel(E/F)$; see \cite[Proposition~3.8]{KimPM}). By a similar argument to that in the proof of Lemma \ref{Galois descent Selmer}, the middle vertical map is an isomorphism. Hence it remains to show that $\ker l_v$ is finite for every $v$. For $v\in \Sigma\setminus\Sss$, this follows from \cite[Propositions 4.1 and 4.5]{Gr03}. 

Now let $v\in \Sss$ and $w\in\Sss(F_{\infty})$ a prime above $v$. Writing $G_v=\Gal(F_{\infty,w}/F_v)$ {and $A=\Qp/\Zp$}, we have the following diagram
{\[   \entrymodifiers={!! <0pt, .8ex>+} \SelectTips{eu}{}\xymatrixcolsep{0.75pc}\xymatrix{
    0 \ar[r]^{} & E(F_v)\otimes A \ar[d]_{a_v} \ar[r] &  H^1(F_v, \Ep)
    \ar[d]_{b_v} \ar[r] & \displaystyle\frac{H^1(F_v, \Ep)}{E(F_v)\otimes A}\longrightarrow 0 \ar[d]_{l_v}\\
    0 \ar[r]^{} & \big(E^{\pm}(F_{\infty,w})\otimes A\big)^{G_v} \ar[r]^{} & H^1(F_{\infty,w}, \Ep)^{G_v} \ar[r] & \displaystyle\left(\frac{H^1(F_{\infty,w}, \Ep)}{\widehat{E}^{\pm}(F_{\infty,w})\otimes A}\right)^{G_v} &} \]}
    with exact rows and that $b_v$ is an isomorphism. Consequently, $a_v$ is injective and $\ker l_v\cong \coker a_v$. Under (S5)(b),  \cite[Proposition~2.18]{Kim14} tells us that $a_v$ is an isomorphism, which  in turn implies that $l_v$ is injective. The proof of the theorem is now complete.
\end{proof}

We now prove the following vanishing criterion. When the elliptic curve has good ordinary reduction at all primes above $p$, this was established in \cite[Proposition 4.12]{HV} and \cite[Theorem 5.11]{LimMHG}. Our result shows that the analogue assertion holds even allowing supersingular reduction.

\begin{corollary}\label{cor:vanish}
Suppose that $(S1)-(S5)$ are satisfied. Assume that {the $\Zp\ps{\Ga}$-module  $X^{\fs}(E/F^{\cyc})$ is torsion}. Then $\Ak_H(X^{\fs}(E/F_{\infty})) =1$ if and only if $\Sel^{\fs}(E/F_{\infty})=0$.
\end{corollary}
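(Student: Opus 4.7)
The direction $\Sel^\fs(E/F_\infty)=0 \Rightarrow \Ak_H(X^\fs(E/F_\infty))=1$ is immediate, since every $H$-homology group of the zero module vanishes and so the Akashi series is trivially $1$. For the converse, write $X:=X^\fs(E/F_\infty)$ and assume $\Ak_H(X)=1$. By Theorem~\ref{main theorem}, $T^r\cdot\Char_\Gamma(X^\fs(E/F^\cyc))$ is a unit in $\Zp\ps\Gamma$. Since $T$ is not a unit, we must have $r=0$ and $\Char_\Gamma(X^\fs(E/F^\cyc))=1$; because $\Zp\ps\Gamma$ is a two-dimensional regular local ring, this forces $X^\fs(E/F^\cyc)$ to be finite.

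The plan is then to feed this input into the Euler characteristic formula of Theorem~\ref{thm:Euler}. The first step is to verify that $X_G$ is finite. For this, Proposition~\ref{mu MHG} gives that $X$ is finitely generated over $\Zp\ps H$ (since $X^\fs(E/F^\cyc)$ is in particular finitely generated over $\Zp$), while Lemma~\ref{Galois descent Selmer} together with the cohomological analysis from the proof of Theorem~\ref{main theorem} shows that $X_H$ is finitely generated over $\Zp$. A careful tracking of the Akashi-series computation in that proof, using the vanishing of $\Char_\Gamma(X^\fs(E/F^\cyc))$ and of $r$, forces $X_H$ and hence $X_G=(X_H)_\Gamma$ to be finite. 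Wadsworth's theorem (as invoked in the proof of Theorem~\ref{thm:Euler}) then gives that $\chi(G,X)$ is well-defined, and Proposition~\ref{Akashi Euler} yields $\chi(G,X)=|\varphi(\Ak_H(X))|_p^{-1}=1$.

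Finiteness of $X_G$ is equivalent to finiteness of $\Sel^\fs(E/F_\infty)^G$, and the diagram~\eqref{diagram:euler} in the proof of Theorem~\ref{thm:Euler} then forces $\Sel(E/F)$ to be finite (it embeds into the former with finite cokernel). Theorem~\ref{thm:Euler} now gives
\[
1=\chi(G,X)=|\sha(E/F)[p^\infty]|\cdot\prod_{v\in\Sigma'}c_v^{(p)}\cdot\prod_{v\in\So}(d_v^{(p)})^2.
\]
Each factor is a nonnegative integer power of $p$, so every factor equals $1$. In particular $\sha(E/F)[p^\infty]=0$; combined with finiteness of $\Sel(E/F)$ (which forces $E(F)$ to have rank zero) and the standard exact sequence $0\to E(F)\otimes\Qp/\Zp\to \Sel(E/F)\to \sha(E/F)[p^\infty]\to 0$, one deduces $\Sel(E/F)=0$. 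Moreover, all $c_v^{(p)}=d_v^{(p)}=1$, which together with hypothesis~(S5)(b) and \cite[Proposition~2.18]{Kim14} for primes in $\Sss$ force every local kernel $\ker l_v$ in \eqref{diagram:euler} to vanish. The snake lemma then gives $\Sel(E/F)\cong \Sel^\fs(E/F_\infty)^G$, whence $\Sel^\fs(E/F_\infty)^G=0$ and therefore $X_G=0$. Nakayama's lemma for the pro-$p$ Iwasawa algebra $\Zp\ps G$ now yields $X=0$, i.e., $\Sel^\fs(E/F_\infty)=0$.

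The main hurdle in this plan is the verification that $X_G$ is finite, since this is what bootstraps the entire Euler-characteristic argument; everything afterwards is essentially a sequence of snake-lemma applications and $p$-adic valuation bookkeeping. I expect this hurdle to be handled via a careful combination of Proposition~\ref{mu MHG}, Lemma~\ref{Galois descent Selmer}, and the explicit cohomology computation carried out in the proof of Theorem~\ref{main theorem}, exploiting the vanishing of $r$ and of the characteristic power series over $\Zp\ps\Gamma$.
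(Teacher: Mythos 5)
Your overall skeleton matches the paper's: reduce to $\Char_\Ga(X^{\fs}(E/F^{\cyc}))=1$, hence $\Sel^{\fs}(E/F^{\cyc})$ finite; get $\Sel(E/F)$ finite; invoke Theorem~\ref{thm:Euler} and Proposition~\ref{Akashi Euler} to force $\sha(E/F)[p^\infty]=0$ and all local factors to be $1$; kill every $\ker l_v$ in \eqref{diagram:euler}; conclude $\Sel(E/F)\cong\Sel^{\fs}(E/F_\infty)^G=0$ and finish by Nakayama. All of that is fine and is exactly what the paper does.

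The one place you diverge is the step you yourself flag as the ``main hurdle'': establishing that $\chi(G,X^{\fs}(E/F_\infty))$ is well-defined by first proving $X_G$ (equivalently $X_H$) is finite via ``a careful tracking of the Akashi-series computation.'' As written this is a gap, and moreover it is harder than what is needed. Finiteness of $X_H$ amounts to finiteness of each $H^1(H_v,D_v(F_{\infty,w}))$ for $v\in\So(F^{\cyc})$ with $\dim H_v\ge 1$, and the identity $\Ak_{H_v}(D_v(F_{\infty,w})^\vee)=1$ from Lemma~\ref{Akashi fg Zp} only controls an \emph{alternating product} of characteristic series; when $\dim H_v\ge 2$ it does not by itself force the individual homology groups to be finite, so an extra argument (using that $D_v(F_v^{\cyc})$ is finite under (S5)(a)) would be required. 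The paper sidesteps all of this: since $\Sel^{\fs}(E/F^{\cyc})$ is finite, the cyclotomic-level descent diagram (the analogue of \eqref{diagram:euler} over $F^{\cyc}$) shows that $\Sel(E/F)\to\Sel^{\fs}(E/F^{\cyc})^{\Ga}$ is injective with finite cokernel, whence $\Sel(E/F)$ is finite. That is precisely the hypothesis of Theorem~\ref{thm:Euler}, whose proof already delivers the well-definedness of $\chi(G,X^{\fs}(E/F_\infty))$; there is no need to prove finiteness of $X_G$ beforehand. If you replace your ``careful tracking'' step by this one-line reduction, your argument coincides with the paper's. (Also, Proposition~\ref{mu MHG} is not needed, and under (S5)(a) one has $r=0$ automatically since there are no split multiplicative primes above $p$, though your derivation of $r=0$ from $T$ being a non-unit is equally valid.)
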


\begin{proof}
 We shall freely use the notation of Theorem \ref{thm:Euler} in the proof.
The  if direction is clear. Conversely, suppose that $\Ak_H(X^{\fs}(E/F_{\infty})) =1$. By Theorem \ref{main theorem}, we have $\Char_{\Ga}(X^{\fs}(E/F^{\cyc}))=1$, which in turn implies that $\Sel^{\fs}(E/F^{\cyc})$ is finite. Following the proof of Theorem~\ref{thm:Euler}, we may show that the restriction map
\[
\Sel(E/F)\rightarrow \Sel^\fs(E/F^\cyc)^\Ga
\]
is injective with finite cokernel. Thus,  $\Sel(E/F)$ is also finite and Theorem \ref{thm:Euler} says that $\chi(G,X^{\fs}(E/F_{\infty}))$  is well-defined. 

Since  $\Ak_H(X^{\fs}(E/F_{\infty})) =1$, Proposition \ref{Akashi Euler} {gives} $\sha(E/F)[p^{\infty}]=0$, $c_v^{(p)}=1$ for every $v\in \Sigma'$ and $|\tilde{E}_v(f_v)|=1$ for every $v\in\So$. As seen in the proof of Theorem \ref{thm:Euler}, one has $\ker l_v=0$ for $v\in\Sss$ in the diagram \eqref{diagram:euler}. By \cite[Proposition 4.1]{Gr03} and that $c_v^{(p)}=1$, it follows that $\ker l_v=0$ for $v\in\Sigma'$. For $v\in \So$, the equality $|\tilde{E}_v(f_v)|=1$ implies that $D_v(F_v)=0$. Since $F_{\infty,w}$ is a pro-$p$ extension of $F_v$ for $w$ above $v$, we have $D_v(F_{\infty,w})=0$ and hence $\ker l_v=0$ for $v\in \So$. Therefore, $\ker l_v=0$ for every $v\in \Sigma$. Thus, the diagram (\ref{diagram:euler}) gives the isomorphism
\[
\Sel(E/F)\rightarrow \Sel^\fs(E/F_\infty)^G.
\]

Recall that $\Sel(E/F)$ is finite and $\sha(E/F)[p^{\infty}]=0$. This implies that $\Sel(E/F)=0$. Consequently, $\Sel^{\fs}(E/F_{\infty})^G=0$. Since $G$ is pro-$p$, this in turn yields that $\Sel^{\fs}(E/F_{\infty})=0$ as required. 
\end{proof} 

Finally, we end our article with  the following observation, which is a generalization of \cite[Corollary 2.8]{AL}.

\begin{corollary} \label{one for all}
Assume that $(S1)-(S5)$ are valid. Suppose that $\fs$ is such that $\Sel^{\fs}(E/F_{\infty}) =0$. Then $\Sel^{\vec{t}}(E/F_{\infty})=0$ for every $\vec{t}\in \{+,-\}^\Sss$ that verifies $(S5)(b)$.
\end{corollary}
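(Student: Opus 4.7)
The plan is to mirror the argument of Corollary~\ref{cor:vanish} for the sign tuple $\vec{t}$: I will use the vanishing of $\Sel^{\fs}(E/F_\infty)$ to extract the numerical invariants that appear in the Euler--characteristic formula (these are insensitive to the signs), and then rerun the control--theorem diagram~(\ref{diagram:euler}) with $\vec{t}$ in place of $\fs$.

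The first step is essentially free. Since $\Sel^{\fs}(E/F_\infty)=0$, the middle--vertical isomorphism in diagram~(\ref{diagram:euler}) for $\fs$ gives $\Sel(E/F)\hookrightarrow \Sel^{\fs}(E/F_\infty)^{G}=0$, so $\Sel(E/F)=0$. In particular $\Sel(E/F)$ is finite, so Theorem~\ref{thm:Euler} applies and $\chi(G,X^{\fs}(E/F_\infty))=1$; comparing with the explicit Euler--characteristic formula forces $|\sha(E/F)[p^{\infty}]|=1$, $c_v^{(p)}=1$ for every $v\in\Sigma'$, and $d_v^{(p)}=1$ for every $v\in\So$.

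The main step is to set up the analogue of diagram~(\ref{diagram:euler}) with $\fs$ replaced by $\vec{t}$. The middle vertical map is still an isomorphism: this is the Hochschild--Serre computation of Lemma~\ref{Galois descent Selmer}, which only uses $E(F_\infty)[p^{\infty}]=0$ and is independent of the signs. By the snake lemma, it then suffices to show $\ker l_v^{\vec{t}}=0$ for each $v\in\Sigma$. For $v\in\Sigma'\cup\So$, the local defining condition does not involve the signs at $\Sss$, so $l_v^{\vec{t}}=l_v^{\fs}$, and the vanishing of $\ker l_v$ follows from $c_v^{(p)}=1$ (via \cite[Prop.~4.1]{Gr03}) and $d_v^{(p)}=1$ (which forces $D_v(F_v)=0$, and hence $D_v(F_{\infty,w})=0$ because $F_{\infty,w}/F_v$ is a pro-$p$ extension), exactly as in the proof of Corollary~\ref{cor:vanish}. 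For $v\in\Sss$, hypothesis (S5)(b) applied to $\vec{t}$ is precisely what is needed to invoke \cite[Prop.~2.18]{Kim14} with the new sign $t_v$ (unconditionally if $t_v=-$, under $4\nmid|F_v:\Qp|$ if $t_v=+$), which gives $a_v$ an isomorphism and thus $\ker l_v^{\vec{t}}=0$. Combining these, the snake lemma yields $\Sel^{\vec{t}}(E/F_\infty)^{G}\cong \Sel(E/F)=0$, and Nakayama's lemma for the finitely generated $\Zp\ps{G}$-module $X^{\vec{t}}(E/F_\infty)$ (using that $G$ is pro-$p$) forces $\Sel^{\vec{t}}(E/F_\infty)=0$.

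The one point that really needs care, rather than any new input, is the bookkeeping at the supersingular primes: switching from $\fs$ to $\vec{t}$ changes the local conditions precisely at $\Sss$, and (S5)(b) on $\vec{t}$ is included in the hypothesis exactly so that the Kim--Kitajima--Otsuki descent machinery (Corollary~\ref{H invariant of plus-minus} and \cite[Prop.~2.18]{Kim14}) still applies with the new sign at each such prime. Everything else in the argument is sign--independent.
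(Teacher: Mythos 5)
Your proposal is correct and follows essentially the same route as the paper: deduce $\Sel(E/F)=0$ from the injection into $\Sel^{\fs}(E/F_\infty)^G$, use Theorem~\ref{thm:Euler} to force the vanishing of $\sha(E/F)[p^\infty]$, the $c_v^{(p)}$ and the $d_v^{(p)}$, and then rerun the argument of Corollary~\ref{cor:vanish} with $\vec{t}$ in place of $\fs$, invoking (S5)(b) for $\vec{t}$ at the supersingular primes. The paper's proof is exactly this, stated more tersely ("one may now proceed as in the proof of Corollary~\ref{cor:vanish}").
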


\begin{proof}
Following from the proof of Theorem~\ref{thm:Euler},  the restriction map
\[
\Sel(E/F)\rightarrow \Sel^\fs(E/F_\infty)^G
\]
is injective with finite cokernel. In particular, our hypothesis implies that $\Sel(E/F)=0$ and hence is finite. By the fact that $\chi(G,X^{\vec{s}}(E/F_{\infty}))=1$ and Theorem \ref{thm:Euler}, we  have $\sha(E/F)[p^{\infty}]=0$, $c_v^{(p)}=1$ for every $v\in \Sigma'$ and $|\tilde{E}_v(f_v)|=1$ for every $v\in\So$. One may now proceed as in the proof of Corollary \ref{cor:vanish} to show that 
$\Sel^{\vec{t}}(E/F_{\infty})=0$.
\end{proof}

\bigskip


\begin{thebibliography}{xx}

\bibitem{AL} S. Ahmed and M. F. Lim, On the Euler characteristics of signed Selmer groups, Bull. Aust. Math. Soc. 101(2) (2020) 238-246.

\bibitem{BL2} K. B\"{u}y\"{u}kboduk and A. Lei, Coleman-adapted Rubin-Stark Kolyvagin systems and supersingular Iwasawa theory of CM abelian varieties, Proc. London Math. Soc.  111(6) (2015),  1338-1378.

\bibitem{BL} K. B\"{u}y\"{u}kboduk and A. Lei, Integral Iwasawa theory of Galois representations
for non-ordinary primes, Math. Z. 286(1) (2017) 361-398.


\bibitem{CFKSV} J.\ Coates, T.\ Fukaya, K.\ Kato, R.\ Sujatha and O.\ Venjakob, The $GL_2$ main conjecture for elliptic curves without complex multiplication, Publ. Math. Inst. Hautes \'Etudes Sci. 101 (2005) 163-208.

\bibitem{CG} J. Coates and R. Greenberg, Kummer theory for abelian varieties over
local fields, Invent. Math. 124(1-3) (1996) 129-174.

\bibitem{CH} J. Coates and S. Howson, Euler characteristics and elliptic
curves II, J. Math. Soc. Japan 53(1) (2001) 175-235.

\bibitem{CSS} J. Coates, P. Schneider and R. Sujatha, Links between cyclotomic and $GL_2$ Iwasawa theory, Doc. Math. (2003) 187-215, Extra Volume: Kazuya Kato's fiftieth birthday.

\bibitem{Gr03} R. Greenberg, Galois theory for the Selmer group
of an abelian variety, Compos. Math. 136(3) (2003) 255-297.

%\bibitem{Gr16} R. Greenberg, On the structure of Selmer groups, in Elliptic Curves, Modular Forms and Iwasawa Theory: In Honour of John H. Coates' 70th Birthday, Cambridge, UK, March 2015 (Springer International Publishing, Switzerland, 2016), 225-252.

\bibitem{HO} Y. Hachimori and T. Ochiai, Notes on non-commutative Iwasawa theory, Asian J. Math. 14(1) (2010) 11-17.

\bibitem{HV} Y. Hachimori and O. Venjakob, Completely faithful
Selmer groups over Kummer extensions, Doc. Math. (2003) 443-478, Extra Volume: Kazuya Kato's fiftieth birthday.

\bibitem{HL} P.-C. Hung and M. F. Lim, On the growth of Mordell-Weil ranks in $p$-adic Lie extensions, to appear in Asian J. Math., available at arXiv:1902.01068[math.NT].

\bibitem{Iw73} K. Iwasawa, On $\ZZ_l$-extensions of algebraic number fields, Ann. Math. 98(2) (1973) 246-326.

\bibitem{Jannsen} U. Jannsen, A spectral sequence for Iwasawa adjoints, M\"unster J. of Math. 7(1) (2014), 135-148.

\bibitem{K} K. Kato, $p$-adic Hodge theory and values of zeta functions of
modular forms, in: Cohomologies $p$-adiques et applications
arithm\'etiques. III., Ast\'erisque 295, 2004, ix, pp.
117-290.

\bibitem{Kim07} B. D. Kim, The parity conjecture for elliptic curves at supersingular reduction primes, Compos. Math. 143(1) (2007) 47-72.

\bibitem{KimPM} B. D. Kim, The plus/minus Selmer groups for supersingular primes, J. Aust. Math. Soc.  95(2) (2013) 189-200.

\bibitem{Kim14} B. D. Kim, Signed-Selmer groups over the $\Zp^2$-extension of an imaginary quadratic field, Canad. J. Math. 66(4) (2014) 826-843.

\bibitem{KimPark} B. D. Kim and J. Park, The main conjecture of Iwasawa theory for elliptic curves with complex multiplication over abelian extensions at supersingular primes, Acta Arith. 181(3) (2017) 209-238.

\bibitem{KO} T. Kitajima and R. Otsuki,
On the plus and the minus Selmer groups
for elliptic curves at supersingular primes, Tokyo J. Math. 41(1) (2018) 273-303.

\bibitem{Kob} S. Kobayashi, Iwasawa theory for elliptic curves at supersingular primes, Invent. Math.  152(1) (2003) 1-36.

\bibitem{LL} A. Lei and M. F. Lim, Mordell-Weil ranks and Tate-Shafarevich groups of elliptic curves with mixed-reduction type over cyclotomic extensions, preprint, available at arXiv:1911.10643[math.NT].

\bibitem{LeiP} A. Lei and B. Palvannan, Codimension two cycles in Iwasawa theory and elliptic curves with supersingular reduction, Forum Math., Sigma (2019), Vol. 7, e25, 81 pages.


%\bibitem{Ponsinet} A. Lei  and G. Ponsinet, On the Mordell-Weil ranks of supersingular abelian varieties in cyclotomic extensions,  to appear in Proc. Amer. Math. Soc., available at arXiv:1807.07604.

\bibitem{LeiS} A. Lei and F. Sprung, Ranks of elliptic curves over $\Zp^2$-extensions, Israel J. Math. 236(1) (2020), 183-206.

\bibitem{LeiSuj} A. Lei and R. Sujatha, On Selmer groups in the supersingular reduction case, to appear in Tokyo J. Math.

\bibitem{LZ} A. Lei and S. Zerbes, Signed Selmer groups over $p$-adic Lie extensions, J. Th\'eor. Nombres Bordeaux 24(2) (2012), 377-403.

\bibitem{LimMHG}  M. F. Lim, A remark on the $\mathfrak{M}_H(G)$-conjecture
and Akashi series, Int. J. Number Theory 11(1)
(2015) 269-297.

\bibitem{LimFine}  M. F. Lim, Notes on the fine Selmer groups, Asian J. Math. 21(2) (2017) 337-362.

\bibitem{Mat} A. Mattuck, Abelian varieties over $p$-adic ground field, Ann. Math. 62(1) (1955) 92-119.

\bibitem{Maz} B. Mazur, Rational points of abelian varieties with values in
towers of number fields, Invent. Math. 18 (1972)
183-266.

\bibitem{NSW} J. Neukirch, A. Schmidt and K. Wingberg,
Cohomology of Number Fields, 2nd edn., Grundlehren Math.
Wiss. 323 (Springer-Verlag, Berlin, 2008).


\bibitem{NS} F. A. E. Nuccio Mortarino Majno di Capriglio and R. Sujatha, Residual supersingular Iwasawa theory and signed Iwasawa invariants, preprint, available at  arXiv1911.10649[math.NT].

\bibitem{OcV03} Y. Ochi and O. Venjakob, On the ranks of Iwasawa modules over $p$-adic Lie extensions, Math. Proc. Cambridge Philos. Soc. 135(1) (2003) 25-43.

\bibitem{PR84} B. Perrin-Riou, Arithm\'etique des courbes elliptiques et the\'{o}rie d'Iwasawa, M\'{e}m. Soc. Math. France 17 (1984) 130pp.

\bibitem{PR00} B. Perrin-Riou, $p$-adic $L$-functions and $p$-adic representations. Translated from the 1995 French original by Leila Schneps and revised by the author. SMF/AMS Texts and Monographs, 3. American Mathematical Society, Providence, RI; Soci\'et\'e Math\'ematique de France, Paris  (2000). xx+150 pp.

\bibitem{Sai} A. Saikia, Selmer groups of elliptic curves with complex multiplication, Canad. J. Math. 56(1) (2004) 194-208.

\bibitem{Sch83} P. Schneider, Iwasawa $L$-functions of varieties over algebraic number fields, A first approach, Invent. Math. 71 (1983) 251-293.

\bibitem{Sch85} P. Schneider, $p$-adic height pairings II. Invent. Math. 79 (1985), 329-374.

\bibitem{Wad} S. Wadsley, Euler characteristics, Akashi series and compact $p$-adic Lie groups, Proc. Amer. Math. Soc. 138(10) (2010) 3455-3465.

\bibitem{Ze04} S. Zerbes, Selmer groups over $p$-adic Lie extensions I, J.
London Math. Soc. 70(3) (2004) 586-608.

\bibitem{Ze09} S. Zerbes, Generalised Euler characteristics of Selmer groups, Proc. London Math. Soc. 98(3) (2009) 775-796.

\bibitem{Ze11} S. Zerbes, Akashi series of Selmer groups, Math. Proc. Cambridge Philos. Soc. 151(2) (2011) 229-243.
\end{thebibliography}
\end{document}